\DeclareMathOperator*{\diag}{diag}
\newcommand{\Cov}{\mathrm{Cov}}
\newcommand{\cov}{\mathrm{cov}}
\newtheorem{lemma}{Lemma}
\newtheorem{theorem}{Theorem}
\newtheorem{corollary}{Corollary}
\begin{document}
\begin{center}
  \Large \textbf{Optimal Design for Estimating the Mean Ability over Time
    in Repeated Item Response Testing}
\end{center}
\begin{center}
  \textbf{Fritjof Freise\footnote{corresponding author}\\
    University of Veterinary
    Medicine Hannover, Department of Biometry, Epidemiology and
    Information Processing,\\
    B\"unteweg 2,
    30559 Hannover, Germany,\\
    e-mail: fritjof.freise@tiho-hannover.de}
\end{center}
\begin{center}
  \textbf{Heinz Holling\\
    University of M\"unster, Institute for Psychology,\\
    Fliednerstra{\ss}e 21, 48149 M\"unster, Germany,\\
    e-mail: holling@wwu.de}
\end{center}
\begin{center}
  \textbf{Rainer Schwabe\\
    University of Magdeburg, Institute for Mathematical Stochastics,\\
    Universit\"atsplatz 2, 39106 Magdeburg, Germany,\\
    e-mail: rainer.schwabe@ovgu.de}
\end{center}
\begin{abstract}
  We present general results on D-optimal designs for estimating the
  mean response in repeated measures growth curve models with metric
  outcomes. For this situation, we derive a novel equivalence theorem
  for checking design optimality. The motivation of this work
  originates from designing a study in psychological item response
  testing with multiple retests to measure the improvement in
  ability. Besides introductory linear growth curves for which
  analytical results can be obtained, we consider two non-linear
  growth curve models incorporating an increasing mean ability and a
  saturation effect. For these models, D-optimal designs are
  determined by computational methods and are validated by means of
  the equivalence theorem.

  Keyword:
  D-optimal design; growth curve; repeated measures model; mixed effect model

  MSC classification:
  Primary: 62K05
  Secondary: 62J05, 62J02, 62P15
\end{abstract}

\section{Introduction}
This article is motivated by planning experiments for analyzing retest
effects of cognitive abilities.  These effects are an important issue
in educational sciences since a long time.  They refer to score gains
through repeated applications of cognitive ability tests, e.g., the
Scholastic Aptitude Test.  Due to retesting a considerable increase in
test performance may arise which has necessarily to be considered to
adequately interpret the test results. Very often, retesting of
intelligence tests has been studied.  According to the results of a
recent meta-analysis by \cite{ScharfenPetersHolling:2018} retesting of
general intelligence leads to significant score gains proceeding
nonlinearly over the first three test administrations and then
reaching a plateau.  Furthermore, a relatively large standard
deviation of the true effects was observed, meaning that the mean
increase of the score gains considerably differs between subjects.
Such score gains are usually analyzed by latent growth models
representing a class of statistical methods to estimate growth or
changes over a period of time.  These statistical techniques are
widely used in psychology and in other behavioral as well as social
sciences.  Usually, the longitudinal data to be analyzed are based on
the same subjects measured at different time points by the same tests
or by parallel test versions.  The growth curves, also called latent
trajectories, might take on different forms, e.g., they may be
systematically increasing or decreasing over time.  Often, a mean
trajectory will be estimated as well as individual deviations from
this trajectory.

The purpose of the present work is to investigate optimal designs for
estimating the model parameters for the mean score.  We first give a
general model description for the response in Section~\ref{sec:model}.
There a key component of the model is that all subjects are observed
repeatedly.  This leads to models with correlated observations within
subjects.  For the within subject covariance we will consider two
different covariance structures: compound symmetry and autoregressive.
In the model assumption a further important feature is that only the
number of time points taken is important and that the actual time
elapsed between different testings does not play a role.  In
Section~\ref{sec:linear-mixed-effects-new}, we derive the covariance
matrices for the maximum likelihood estimates of the mean score.
First, we consider an unstructured behavior of the mean score
described by an analysis of variance type model which treats the time
points as levels of a single factor.  Then, we introduce a straight
line model for the mean response curve and two nonlinear models which
capture the saturation property of a plateau: a sigmoid (logistic
type) and an exponential decay model.  These latter models are common
candidates in modeling dose response relationships (see
e.g. \citealp{BrensonPinheiroBretz:2003TRa} or
\citealp{DetteBretzPepelyshevPinheiro:2008a}).  Here, we also derive a
general form of the information matrix which relies only on the
frequencies of observations at each time point.  These frequencies
completely determine the properties of a design in the present
situation.  The quality of a design is measured in terms of the Fisher
information matrix since the (asymptotic) covariance matrix of the
estimates is proportional to the inverse Fisher information.  In
Section~\ref{sec:optimal-design} we summarize some basic facts of the
theory of optimal design and obtain analytical results in the case of
unstructured and linear mean response curves.  We also establish there
an equivalence theorem which provides a tool for checking the
optimality of a candidate design.  For nonlinear models of the mean
response curve, the information matrix may depend on the model
parameters \citep[see e.g.][Chapter 6]{Silvey:1980}.  In that case, we
restrict ourselves to locally optimal designs which may depend on
nominal values for the parameters \citep{Chernoff:1953}.  In
Section~\ref{sec:computational} we present numerical results of
optimal designs for nonlinear mean response curves when analytical
solutions are not available.  The article concludes with a short
discussion in Section~\ref{sec:discussion}.  Technical proofs are
deferred to the Appendix.

\section{General Model Specifications}
\label{sec:model}
As outlined in the introduction, the model has to describe the
situation where \(N\) subjects are tested at \(J\) time points
\(t_1 < \ldots < t_J\) each. The test administered at the $j$th time
point \(t_j\) consists of the same number $I_{j}$ of items for each
subject. The total number of items {per subject} over all time points
will be denoted by $I = \sum_{j = 1}^J I_{j}$.  The corresponding
observation of the score \(y_{n j i}\) of subject \(n\) for the
\(i\)th item at time point \(t_j\) is modeled as the realization of a
random variable \(Y_{n j i}\).  For each subject the mean of
\(Y_{n j i}\) is given by the individual ability \(\mu_{nj}\) of
subject \(n\) at time point \(t_j\).

Moreover, as we are interested in a general description of the
development of ability over time, we assume that at each time point
\(t_j\) the individual abilities \(\mu_{n j}\) deviate randomly from
an aggregate population ability \(\mu_{j}\) at time point \(t_j\).

As a general framework, we assume, that for all time points \(t_j\)
the population ability \(\mu_{j}\) is a function of a
\(p\)-dimensional vector \(\bm{\beta} =
\begin{pmatrix}
  \beta_{0} & \cdots & \beta_{p - 1}
\end{pmatrix}^{\mathrm{T}}\) of parameters, i.e.\
\(\mu_{j} = \mu_{j}(\bm{\beta})\) and \(p \leq J\).  The corresponding
vector containing the population abilities for all time points is
denoted by \(\bm{\mu}(\bm{\beta}) =
\begin{pmatrix}
  \mu_{1}(\bm{\beta}) & \cdots & \mu_{J}(\bm{\beta})
\end{pmatrix}^{\mathrm{T}}\).  Specifically, in
Section~\ref{sec:linear-mixed-effects-new}, we will start with
considering an unstructured model in which the population ability
\(\bm{\mu} = \bm{\beta}\) may vary arbitrarily over time without any
restrictions, i.e.\ \(p = J\), and then extend the results to
structured models in Subsection~\ref{subsec:longmodeltheta}, where the
population ability depends on time by a functional, potentially
nonlinear relationship (\(p < J\)).

While observations can be assumed to be uncorrelated between subjects,
they will, typically, be correlated within subjects. For modelling
this dependence, we introduce the individual abilities \(\mu_{nj}\) as
random effects \(\mu_{n j} = \mu_{j} + \gamma_{n j}\), where
\(\gamma_{n j}\) are the individual deviations from the population
mean \(\mu_{j}\) for subject \(n\) at time point \(t_j\).  To become
more specific, we suppose that the conditional mean of the score
\(Y_{n j i}\) given the random effect can be described as the
difference of the ability \(\mu_{n j}\) of subject \(n\) at time point
\(t_j\) and the difficulty \(\varsigma_{j i}\) of the item presented,
i.e.\
\(\mathrm{E}(Y_{n j i}|\gamma_{n j}) = \mu_{n j} - \varsigma_{j i} =
\mu_{j} + \gamma_{n j} - \varsigma_{j i}\).

The difficulties \(\varsigma_{j i}\) of the items presented to the
subjects are supposed to be known from previous calibration
experiments. As the item difficulties are modeled as additive
constants, we may transform the scores to their normalized version
\(\widetilde{Y}_{n j i} = Y_{n j i} - \varsigma_{j i}\). We, thus, may
assume without loss of generality that throughout the remainder of the
paper all difficulties are formally set to \(0\) and, hence,
\(\mathrm{E}(Y_{n j i}|\gamma_{n j}) = \mu_{n j} = \mu_{j} + \gamma_{n
  j}\).  As a consequence, only the numbers \(I_j\) of items at each
time point \(t_j\) are relevant for design considerations.

In the present situation, the individual observations are modeled by
the linear mixed model
\begin{equation*}
  \label{eq:lmm2}
  Y_{n j i}  
  = \mu_{j} + \gamma_{n j} +
  \varepsilon_{n j i},
\end{equation*}
$i = 1, \ldots, I_{j}$, $j = 1, \ldots, J$, $n = 1, \ldots, N$. The
errors $\varepsilon_{n j i}$ are assumed to be uncorrelated and
normally distributed with zero mean and variance
\(\sigma_{\varepsilon}^2 > 0\).

Also the random effects \(\gamma_{nj}\) are assumed to be jointly
normally distributed with zero mean, and to be independent between
subjects, i.e.\ \(\cov(\gamma_{n j}, \gamma_{n' j'}) = 0\) for
\(n \neq n'\), \(j, j' = 1, \ldots, J\), but that there is a within
subject covariance \(\sigma_{j j'}\), i.e.\
\(\cov(\gamma_{nj}, \gamma_{n j'}) = \sigma_{j j'}\), \(j\neq j'\),
\(n = 1, \ldots, N\). Also assume, that the errors and random effects
are uncorrelated, i.e.\
\(\cov(\varepsilon_{n j i}, \gamma_{n' j'}) = 0\), for all
\(i = 1, \ldots, I_{j}\), \(j, j' = 1, \ldots, J\), and
\(n, n' = 1, \ldots, N\). Denote the vector of individual deviations
for subject \(n\) by
\(\bm{\gamma}_{n} =\begin{pmatrix} \gamma_{n 1} & \cdots & \gamma_{n
    J} \end{pmatrix}^{\mathrm{T}}\).  For these random effects, we
assume that they are independent identically multivariate normal with
zero mean and nonnegative definite covariance matrix
\(\mathbf{\Sigma}_{\gamma}\), with entries \(\sigma_{j j'}\).

For the within subject covariance structure we deal with two different
models which are commonly used to describe the within subject
dependence: Compound symmetry and an autoregressive (AR(1))
correlation structure.

In the case of compound symmetry, it is assumed that the correlation
between all repetitions is the same for all pairs of distinct time
points. Then the covariance matrix is given by
$\mathbf{\Sigma}_{\gamma,CS} = \sigma_{\gamma}^{2} ((1 - \rho)
\mathbf{I}_{J} + \rho \mathbf{1}_{J} \mathbf{1}_{J}^{\mathrm{T}})$,
where \(\sigma_\gamma^2 > 0\) is the variance of the random effects
$\gamma_{n j}$, \(\rho\) is the correlation of the random effects
within subjects, \(\mathbf{I}_{m}\) is the \(m \times m\) identity
matrix, and \(\mathbf{1}_{m} =
\begin{pmatrix}
  1 & \cdots & 1
\end{pmatrix}^{\mathrm{T}}\) denotes the \(m\)-dimensional vector with
all entries equal to \(1\).  We additionally assume that the within
subject correlation is nonnegative, \(0 \leq \rho \leq 1\).  Note that
compound symmetry occurs when the random effects split up additively
(\(\gamma_{n j} = \tilde\gamma_{n 0} + \tilde\gamma_{n j}\)) into a
common block effect \(\tilde\gamma_{n 0}\) with variance
\(\rho \sigma_{\gamma}^{2}\) which is constant over time and
independent identically distributed time effects
\(\tilde\gamma_{n j}\) with variance
\((1 - \rho) \sigma_{\gamma}^{2}\).

In the autoregressive covariance structure the correlation between
neighboring time points is higher than the correlation of those, which
are farther away.  More specifically, for AR(1), we have 
\(\sigma_{j j^\prime} =\sigma_\gamma^2 \rho^{|j - j^\prime|}\), i.e.\ 
\begin{equation*}
  \mathbf{\Sigma}_{\gamma,AR}=\sigma_{\gamma}^{2}
  \begin{pmatrix}
    1 & \rho & \rho^2 & \cdots & \rho^{J - 1} \\
    \rho & 1 & \rho & \cdots & \rho^{J - 2} \\
    \vdots & \ddots & \ddots & \ddots & \vdots \\
    \rho^{J - 2} & \cdots & \rho & 1 & \rho \\
    \rho^{J - 1} & \cdots & \rho^2 & \rho & 1
  \end{pmatrix}
\end{equation*}
with \(\sigma_{\gamma}^2>0\).  Also here we additionally assume that
the within subject correlation is nonnegative, \(0 \leq \rho \leq 1\).

To display distributional properties of the model and to derive
expressions for parameter estimates, it is convenient to rewrite the
model in vector notation.  The vectors of observations and errors for
subject \(n\) at time point \(t_j\) are
\(\mathbf{Y}_{n j} = \begin{pmatrix} {Y}_{n j 1}& \cdots & {Y}_{n j
    I_{j}} \end{pmatrix}^{\mathrm{T}}\) and
\(\bm{\varepsilon}_{n j} = \begin{pmatrix} \varepsilon_{n j 1} &
  \cdots & \varepsilon_{n j I_{j}} \end{pmatrix}^{\mathrm{T}}\),
respectively.

The vector \(\mathbf{Y}_{n j}\) of observations for subject \(n\) at
time point \(t_j\) is modeled by
\begin{equation*}
  \mathbf{Y}_{n j}
  =
  \mathbf{1}_{I_{j}}\mu_{j}
  + \mathbf{1}_{I_{j}}\gamma_{nj}
  + \bm{\varepsilon}_{n j},  
\end{equation*}
\(j = 1, \ldots, J\), \(n = 1, \ldots, N\).  Then \(\mathbf{Y}_{n j}\)
is \(I_j\)-dimensional multivariate normal with expectation
\(\mathrm{E}(\mathbf{Y}_{n j}) = \mathbf{1}_{I_{j}}\mu_{j}\) and
covariance matrix
\(\Cov(\mathbf{Y}_{n j}) = \sigma_{\varepsilon}^{2}\mathbf{I}_{I_{j}}
+ \sigma_{j j} \mathbf{1}_{I_{j}} \mathbf{1}_{I_{j}}^{\mathrm{T}}\).
Moreover, within subjects observational vectors have covariance
\(\Cov(\mathbf{Y}_{n j}, \mathbf{Y}_{n j'}) = \sigma_{j j'}
\mathbf{1}_{I_{j}} \mathbf{1}_{I_{j'}}^{\mathrm{T}}\),
\(j, j' = 1, \ldots, J\), \(j \neq j'\), \(n = 1, \ldots, N\), and
observational vectors are uncorrelated between subjects,
\(\Cov(\mathbf{Y}_{n j}, \mathbf{Y}_{n' j'}) = \mathbf{0}\) for
\(n \neq n'\), where \(\mathbf{0}\) is a generic zero matrix of
appropriate size.

Denote by
\(\mathbf{F}_{n} =\diag_{j = 1, \ldots, J}(\mathbf{1}_{I_{j}})\) the
individual design matrixfor subject \(n\), where
\(\diag_{j = 1, \ldots, J}(\mathbf{C}_{j})\) denotes a
\(\sum_{j = 1}^{J} m_{j} \times \sum_{j = 1}^{J} n_{j}\) block
diagonal matrix with diagonal blocks \(\mathbf{C}_{j}\) of size
\(m_{j} \times n_{j}\) and off-diagonal zero matrices \(\mathbf{0}\)
of appropriate size. Remember that all individuals receive the same
design, i.e.\ \(\mathbf{F}_{n}= \mathbf{F}\), say, for all
\(n = 1, \ldots, N\).  Then the model for all observations
\(\mathbf{Y}_{n} =
\begin{pmatrix} \mathbf{Y}_{n 1}^{\mathrm{T}} & \cdots & \mathbf{Y}_{n
    J}^{\mathrm{T}} \end{pmatrix}^{\mathrm{T}}\) of a subject \(n\)
can be written in vector notation as
\begin{equation*}
  \mathbf{Y}_{n}
  = \mathbf{F}
  \bm{\mu}
  + \mathbf{F}
  \bm{\gamma}_{n}
  + \bm{\varepsilon}_{n},
\end{equation*}
where \(\bm{\varepsilon}_{n} =
\begin{pmatrix} \bm{\varepsilon}_{n 1}^{\mathrm{T}}& \cdots &
  \bm{\varepsilon}_{n J}^{\mathrm{T}} \end{pmatrix}^{\mathrm{T}}\) is
the corresponding error vector for subject \(n\).  Then for each
subject the observation vector \(\mathbf{Y}_{n}\) is \(I\)-dimensional
multivariate normal with expectation
\begin{equation*}\mathrm{E}(\mathbf{Y}_{n}) = \mathbf{F}
  \bm{\mu}
\end{equation*}
and covariance matrix
\begin{equation*}\mathbf{V}_{n} = \Cov(\mathbf{Y}_{n})
  = 
  \sigma_{\varepsilon}^{2} \mathbf{I}_{I} 
  + \mathbf{F}
  \mathbf{\Sigma}_{\gamma}
  \mathbf{F}
  ^{\mathrm{T}} .
\end{equation*}
Note that also the individual covariances are identical,
\(\mathbf{V}_{n}=\mathbf{V}\), for all individuals
\(n = 1, \ldots, N\).  Further, because at each time point \(t_j\) all
subjects have to respond to the same number \(I_j\) of items, the
model equation of the vector
\(\mathbf{Y} = \begin{pmatrix} \mathbf{Y}_{1}^{\mathrm{T}} & \cdots &
  \mathbf{Y}_{N}^{\mathrm{T}} \end{pmatrix}^{\mathrm{T}}\) of all
observations has a product-type structure
\begin{equation*}
  \mathbf{Y}
  = (\mathbf{1}_{N} \otimes \mathbf{F}) \bm{\mu}
  + (\mathbf{I}_{N} \otimes \mathbf{F}) \bm{\gamma}
  + \bm{\varepsilon} \, .
\end{equation*}
where \(\bm{\gamma} = \begin{pmatrix} \bm{\gamma}_{1}^{\mathrm{T}} & \cdots &
  \bm{\gamma}_{N}^{\mathrm{T}} \end{pmatrix}^{\mathrm{T}}\)
is the \(NJ\)-dimensional stacked vector of all random effects, 
\(\bm{\varepsilon}  = \begin{pmatrix} \bm{\varepsilon}_{1}^{\mathrm{T}} & \cdots 
  & \bm{\varepsilon}_{N}^{\mathrm{T}} \end{pmatrix}^{\mathrm{T}}\)
is the \(NI\)-dimensional stacked vector of error terms, and
``\(\otimes\)'' denotes the Kronecker product of matrices and vectors, respectively.

Observations from different subjects \(n\) and \(n'\) are
uncorrelated, \(\Cov(\mathbf{Y}_{n}, \mathbf{Y}_{n'}) = \mathbf{0}\).
The vector of the random components and the vector of the error terms
are multivariate normal with zero mean and covariance matrix
\(\Cov(\bm{\gamma}) = \mathbf{I}_{N} \otimes
\mathbf{\Sigma}_{\gamma}\) and
\(\Cov(\bm{\varepsilon}) = \sigma_{\varepsilon}^2 \mathbf{I}_{N I}\),
respectively.  As a consequence, \(\mathbf{Y}\) is multivariate normal
with expectation
\(\mathrm{E}(\mathbf{Y}) = (\mathbf{1}_N\otimes \mathbf{F}) \bm{\mu}\)
and covariance matrix
\(\Cov(\mathbf{Y}) = \mathbf{I}_{N} \otimes \mathbf{V}\).  Note, that
the individual covariance matrix \(\mathbf{V}\) and, hence,
\(\Cov(\mathbf{Y})\) is non-singular, since
\(\mathbf{V} \geq \sigma_{\varepsilon}^{2} \mathbf{I}_{I} > 0\),
where, for matrices, the relations ``\(\geq\)'' and ``\(>\)'' are
meant in the sense of nonnegative and positive definiteness,
respectively.

\section{Mean Response Curves}
\label{sec:linear-mixed-effects-new}
To derive results for the situation that the mean responses
\(\mu_1, \ldots, \mu_J\) are emerging from a growth curve with only
few parameters, we first start with an unstructured case in which the
time points \(t_1,\ldots,t_J\) may be considered as levels in a
one-way layout model.

\subsection{Analysis of Variance Type Model (Unstructured  Time Dependence)}
\label{sec:sub:unstructured}
We start with assuming \(\bm{\mu} = \bm{\beta}\) without imposing any
restrictions on \(\mu_{j} = \beta_{j}\), \(j = 1, \ldots, J\).  Hence,
the time points may be interpreted as levels of a single factorial
variable ``time''.  Estimation of the mean score in this model results
in estimating \(\bm{\beta} = \bm{\mu}\).  Under the normality
assumption, the maximum-likelihood estimator is a generalized least
squares estimator which is given by
\begin{align}
  \label{eq:OLS}		
  \hat{\bm{\beta}} 
  & =
    \left((\mathbf{1}_{N} \otimes \mathbf{F})^{\mathrm{T}} (\mathbf{I}_{N} \otimes \mathbf{V})^{-1}
    (\mathbf{1}_{N} \otimes \mathbf{F})\right)^{-1}
    (\mathbf{1}_{N} \otimes \mathbf{F})^{\mathrm{T}} (\mathbf{I}_{N} \otimes \mathbf{V})^{-1}
    \mathbf{Y} 
    \nonumber
  \\
  & =
    {{\frac{1}{N}}}\sum_{n = 1}^{N}
    \left(\mathbf{F}^{\mathrm{T}} \mathbf{V}^{-1}
    \mathbf{F}\right)^{-1}
    \mathbf{F}^{\mathrm{T}} \mathbf{V}^{-1}
    \mathbf{Y}_n 
    =
    {{\frac{1}{N}}}\sum_{n = 1}^{N}
    \hat{\bm{\beta}}_n ,
\end{align}
where
\(\hat{\bm{\beta}}_n = \left(\mathbf{F}^{\mathrm{T}} \mathbf{V}^{-1}
  \mathbf{F}\right)^{-1} \mathbf{F}^{\mathrm{T}} \mathbf{V}^{-1}
\bm{Y_n}\) is the estimated mean score based on a single subject \(n\),
\(n = 1, \ldots, N\) (see e.g.\ \citealp{Rao:1973}, Chapter~4, for the
general structure, and \citealp{EntholznerBendaSchmelterSchwabe:2005},
for the representation as an average of individual fits).  As the
individual covariance matrix \(\mathbf{V}\) and the individual design
matrix \(\mathbf{F}\) interchange in the sense
\(\mathbf{V}\mathbf{F} = \mathbf{F}\mathbf{U}\), where
\(\mathbf{U} = \mathbf{\Sigma}_{\gamma} \diag_{j = 1, \ldots,
  J}(I_{j}) + \sigma_{\varepsilon}^2 \mathbf{I}_{J}\), the individual
generalized least squares estimator \(\hat{\bm{\beta}}_n\) and the
ordinary least squares estimator
\(\hat{\bm{\beta}}_{n,OLS} = \left(\mathbf{F}^{\mathrm{T}}
  \mathbf{F}\right)^{-1} \mathbf{F}^{\mathrm{T}} \bm{Y_n}\) based on
the observations of subject \(n\) coincide \cite[see][]{Zyskind:1967}.
Accordingly, also in the full model, the covariance matrix
\(\mathbf{I}_{N} \otimes \mathbf{V}\) and the design matrix
\(\mathbf{1}_{N} \otimes \mathbf{F}\) interchange,
\((\mathbf{I}_{N} \otimes \mathbf{V}) (\mathbf{1}_{N} \otimes
\mathbf{F}) = (\mathbf{1}_{N} \otimes \mathbf{F}) \mathbf{U}\). Thus,
also in the full model, the generalized least squares estimator
\(\hat{\bm{\beta}}\) and the ordinary least squares estimator
\(\hat{\bm{\beta}}_{OLS} =
\begin{pmatrix}
\hat\beta_1 & \cdots & \hat\beta_J
\end{pmatrix}^{\mathrm{T}}\) coincide, 
where \(\hat\beta_j = \frac{1}{N I_j}
\sum_{n = 1}^{N} \sum_{i = 1}^{I_j} Y_{n j i}\) is the average score at the \(j\)th time point over all subjects \(n = 1, \ldots, N\).
Thus, \(\hat{\bm{\beta}} = \hat{\bm{\beta}}_{OLS}\)
does not depend on the particular form of
\(\mathbf{V}\) and, hence, does not depend on \(\mathbf{\Sigma}_{\gamma}\).

The covariance matrix of
\(\hat{\bm{\beta}}\) is
\begin{equation}
  \label{eq:covoftheta}
  \mathbf{\Cov}(\hat{\bm{\beta}})
  = {\textstyle{\frac{1}{N}}} \left(\mathbf{F}^{\mathrm{T}} \mathbf{V}^{-1} \mathbf{F} \right)^{-1} .
\end{equation}
For the estimability of \(\bm{\beta}\) observations have to be made at
all time points, i.e.\ \(I_j > 0\) for all \(j = 1, \ldots, J\).  The
covariance matrix of \(\hat{\bm{\beta}}\) can be expressed as
\begin{equation}
  \mathbf{\Cov}(\hat{\bm{\beta}})
  = {\textstyle{\frac{1}{N}}} \left(
    \sigma_{\varepsilon}^{2}
    \mathbf{M}_{0}^{-1}
    + \mathbf{\Sigma}_{\gamma} 
  \right) , 
  \label{eq:covoftheta2}
\end{equation}
where
\(\mathbf{M}_{0} = \mathbf{F}^{\mathrm{T}} \mathbf{F} =
\mathrm{diag}_{j=1, \ldots, J}(I_j)\) is the information matrix in the
corresponding linear fixed effects model of a one-way layout with time
points \(t_j\) as levels.  The representation~\eqref{eq:covoftheta2}
is in accordance with the results given in
\cite{EntholznerBendaSchmelterSchwabe:2005}.

Note that the covariance matrix becomes smaller, in the sense of
nonnegative definiteness, if the numbers \(I_j\) of items are
increased or the number \(N\) of subjects becomes larger.  However,
consistency of the estimator is only achieved if the number \(N\) of
subjects tends to infinity while increasing the numbers \(I_j\) of
items will only reduce the covariance matrix to
\(\frac{1}{N} \mathbf{\Sigma}_{\gamma}\).

In the case of compound symmetry of the within covariance matrix
\(\mathbf{\Sigma}_{\gamma, CS}\) the covariance matrix of
\(\hat{\bm{\beta}}\) can be written as
\begin{equation*}
  \Cov(\hat{\bm{\beta}})
  = {{\frac{\sigma_{\gamma}^{2}}{N}}} \left(
    {\textstyle{\diag_{j = 1, \ldots, J}}}\left(1 - \rho + {\textstyle{\frac{1}{\tau^{2}I_{j}}}}\right)
    + \rho \mathbf{1}_{J} \mathbf{1}_{J}^{\mathrm{T}}
  \right),
\end{equation*}
where \(\tau^2 = \sigma_{\gamma}^2 / \sigma_{\varepsilon}^2\) denotes
the variance ratio.  If the number of items is equal at each time
point, i.e.\ \(I_j = I/J\) for all \(j = 1, \ldots, J\), then also
the covariance matrix of \(\hat{\bm{\beta}}\) can be seen to be
compound symmetric.

In the case of an autoregressive within covariance matrix
\(\mathbf{\Sigma}_{\gamma, AR}\) the covariance matrix of the
estimator \(\hat{\bm{\beta}}\) is given by
\begin{align*}
  \Cov(\hat{\bm{\beta}})
  & = \frac{\sigma_{\gamma}^{2}}{N}
    \begin{pmatrix}
      1 + \frac{1}{\tau^{2}I_{1}} & \rho & \rho^2 & \cdots & \rho^{J - 1} \\
      \rho & 1 + \frac{1}{\tau^{2}I_{2}} & \rho & \cdots & \rho^{J - 2} \\
      \vdots & \ddots & \ddots & \ddots & \vdots \\
      \rho^{J - 2} & \cdots & \rho & 1 + \frac{1}{\tau^{2}I_{J - 1}} & \rho \\
      \rho^{J - 1} & \cdots & \rho^2 & \rho & 1 + \frac{1}{\tau^{2}I_{J}}
    \end{pmatrix}.
\end{align*}

\subsection{Parameterized Growth Curves}
\label{subsec:longmodeltheta}
In this subsection we treat some commonly used parametric models for
the mean scores \(\mu_j(\bm{\beta})\).  For the time points \(t_j\) we
assume that they are equally spaced, \(t_{j} = j - 1\), because we are
not interested in the actual time between tests, but only in the
number of tests taken so far.

For illustrative purposes, we start with a simple straight line growth
curve.  The time trend is described by the equation
\(\mu_j(\bm{\beta}) = \beta_0 + \beta_1 t_j\), where \(\beta_0\) is
the mean score at the beginning of the testing period and \(\beta_1\)
is the increase in the mean score from one testing instance to the
next one.  This increment is assumed to be constant over time.

To cover the needs for a learning curve ending up in a plateau, we
further consider two nonlinear models for the mean response curve.
First, we consider a three parameter exponential decay model given by
\begin{equation}
  \mu_{j}(\bm{\beta}) = \beta_{1} - \left(\beta_{1} - \beta_{0}\right)
  \exp( - \beta_{2} t_{j})
  ,\quad j = 1, \ldots, J,
\end{equation}
and \(\bm{\beta} =
\begin{pmatrix}
  \beta_{0} & \beta_{1} & \beta_{2}
\end{pmatrix}^{\mathrm{T}}\), \(\beta_{0} < \beta_{1}\),
\(\beta_{2} > 0\).  In the exponential model the parameter
\(\beta_{0}\) corresponds to the initial value (offset) of the mean
response at the beginning of the testing period, \(t_1 = 0\).  The
asymptote (saturation level) for \(t \rightarrow \infty\) is given by
\(\beta_{1}\) and the strength of the slope (speed of saturation) is
determined by \(\beta_{2}\).

Alternatively, we also consider a sigmoid (four parameter logistic)
model given by
\begin{equation}
  \mu_{j}(\bm{\beta}) = \beta_{0} + \left(\beta_{1} - \beta_{0}\right)
  \frac{1}{1 + \exp( - (\beta_{2} t_{j} + \beta_{3}))}
  , \quad j = 1, \ldots, J,
\end{equation}
and
\(\bm{\beta} =
\begin{pmatrix}
  \beta_{0} & \beta_{1} & \beta_{2} & \beta_{3}
\end{pmatrix}^{\mathrm{T}}\), \(\beta_{0} < \beta_{1}\),
\(\beta_{2} > 0\).  This model is closely related to the Hill equation
or the so called \(E_{max}\)-model \citep{ReeveTurner:2013a}.  In this
model the parameter \(\beta_{0}\) corresponds to the asymptote
(baseline) for \(t \rightarrow - \infty\).  Also here \(\beta_{1}\) is
the asymptote for \(t_j \rightarrow \infty\) and \(\beta_{2}\) is the
strength of the slope.  The last parameter, \(\beta_3\), indicates the
location of the \(ED_{50} = - \beta_3 / \beta_2\), where half of the
saturation is achieved.  In contrast to the four parameter logistic
model for binary data, where \(\mu_{j}(\bm{\beta})\) is the
probability of response at time point \(j\), here \(\beta_{0}\) and
\(\beta_{1}\) are not restricted to
\(0 \leq \beta_{0} < \beta_{1} \leq 1\).

Note, that in the sigmoid model the $ED_{50}$ corresponds to the
inflection point of the mean score function.  If the $ED_{50}$ is
negative, the shape of the sigmoid and the exponential model yield
similar results in practical situations.  Some examples for the shape
of the mean functions for different values of the parameter{s} are
shown in Figure~\ref{fig:mean}.
\begin{figure}
  \centering
  \includegraphics[width=0.45\textwidth]{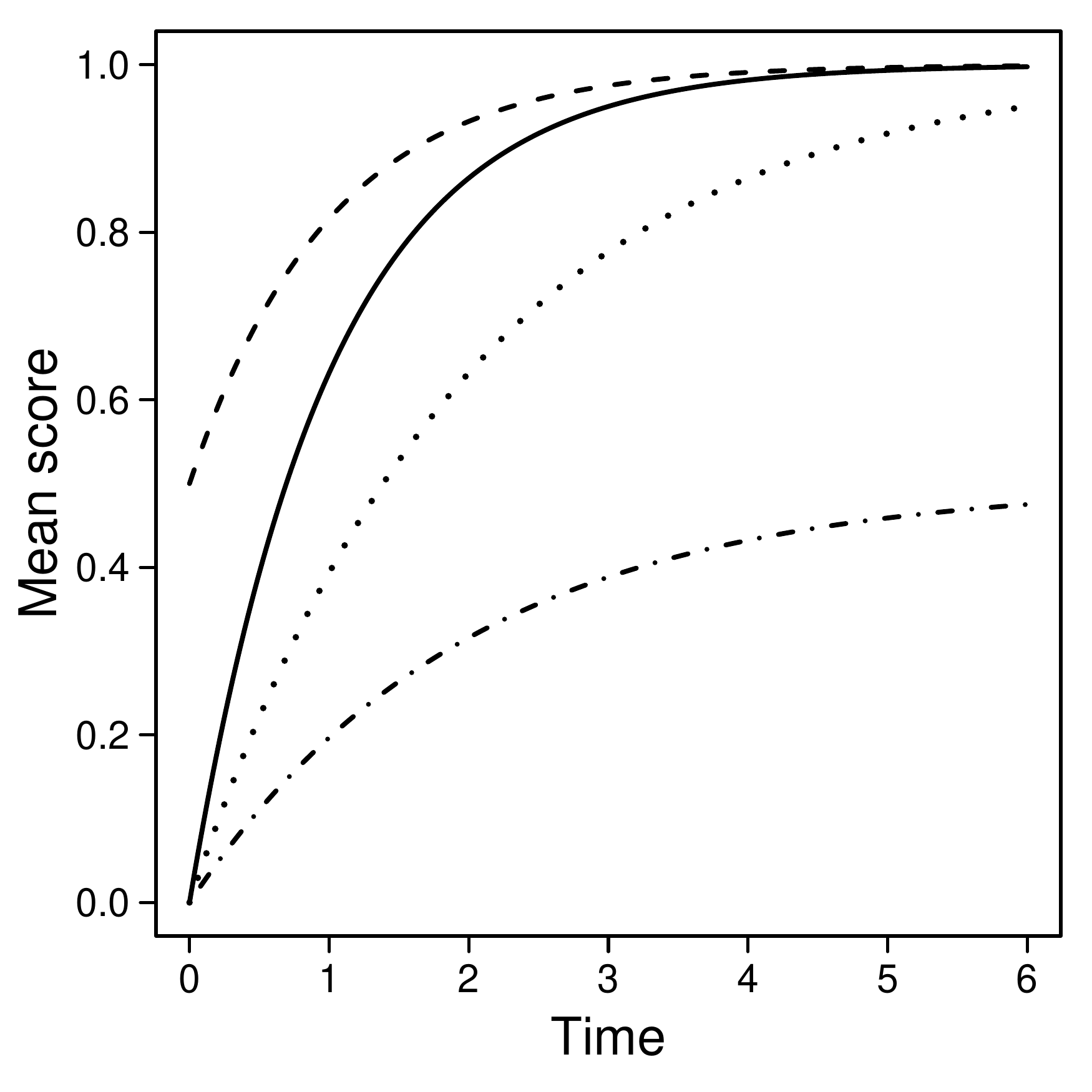}
  \includegraphics[width=0.45\textwidth]{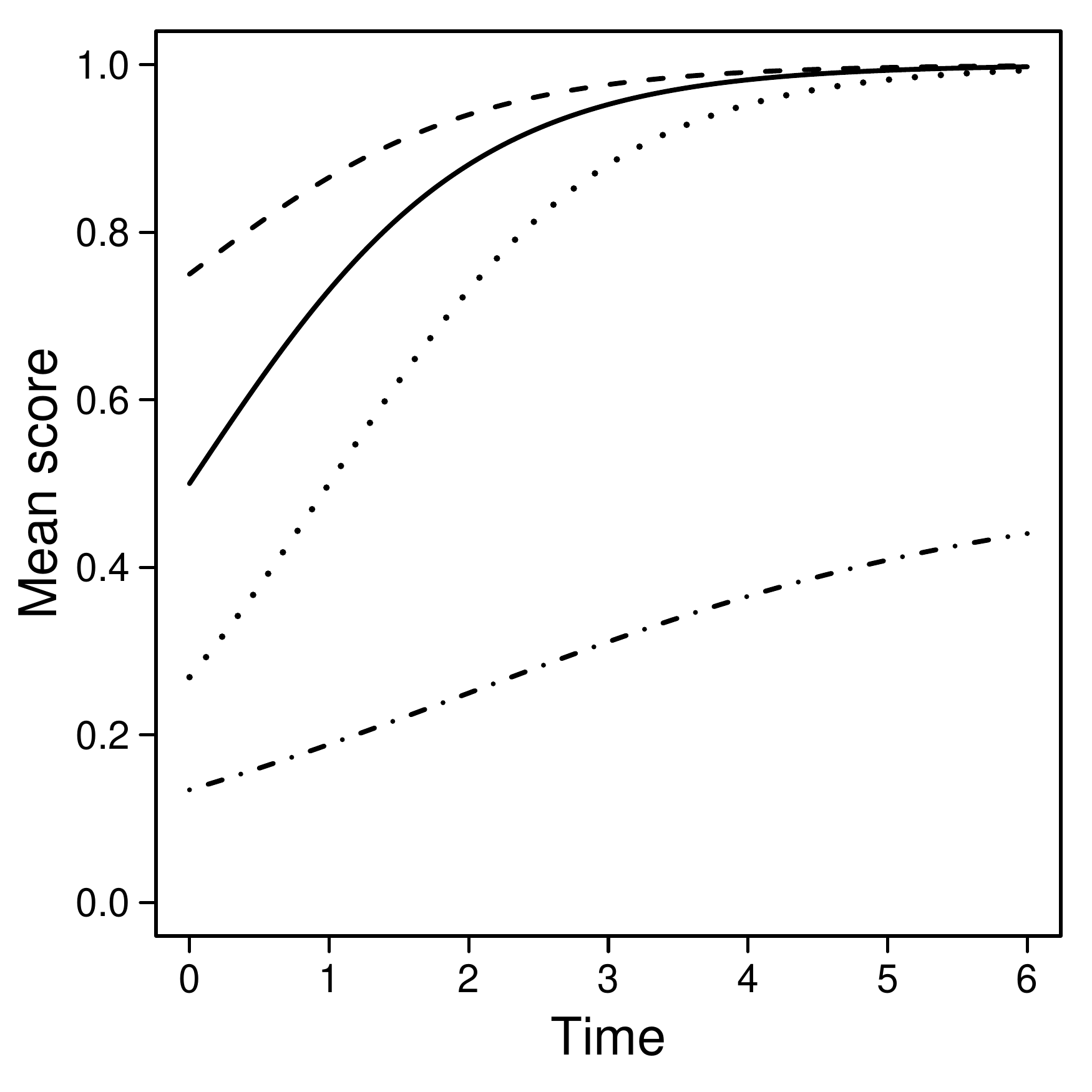}
  \caption{Mean score functions for different values of
    \(\bm{\beta}\). (left: exponential, solid
    $\bm{\beta}=(0\;1\;1)^{\mathrm{T}}$, dashed
    $\bm{\beta}=(0.5\;1\;1)^{\mathrm{T}}$, dotted
    $\bm{\beta}=(0\;1\;0.5)^{\mathrm{T}}$, dash-dotted
    $\bm{\beta}=(0\;0.5\;0.5)^{\mathrm{T}}$; right: 
    sigmoid logistic, solid
    $\bm{\beta} =(0\;1\;1\;0)^{\mathrm{T}} $, dashed
    $\bm{\beta}=(0.5\;1\;1\;0)^{\mathrm{T}}$, dotted
    $\bm{\beta}=(0\;1\;1\;-1)^{\mathrm{T}}$, dash-dotted
    $\bm{\beta}=(0\;0.5\;0.5\;-1)^{\mathrm{T}}$ )}
  \label{fig:mean}
\end{figure}

The parameter vector \(\bm{\beta}\) will commonly be estimated by the
maximum-likelihood estimator \(\hat{\bm{\beta}}\).  To allow for
estimability of \(\bm{\beta}\), the number of time points must be at
least as large as the number of parameters (\(J \geq p\)).  When the
covariance matrix \(\mathbf{V}\) is known, the log-likelihood for
estimating \(\bm{\beta}\) is given by
\begin{equation*}
  l(\bm{\beta})
  = - {\textstyle{\frac{NI}{2}}} \log\left(2 \pi\right)
    - {\textstyle{\frac{N}{2}}} \log\left(\det(\mathbf{V})\right) 
   - {\textstyle{\frac{1}{2}}} \sum_{n = 1}^{N} \left(\mathbf{Y}_n
    - \mathbf{F} \bm{\mu}(\bm{\beta})\right)^{\mathrm{T}}
    \mathbf{V}^{-1}
    \left(\mathbf{Y}_n
    - \mathbf{F} \bm{\mu}(\bm{\beta})\right).
\end{equation*}
The maximum-likelihood estimator \(\hat{\bm{\beta}}\) of
\(\bm{\beta}\) is obtained by equating the derivative of the
log-likelihood to zero.  Only the last expression on the right hand
side depends on $\bm{\beta}$ and, hence, the derivative of the
log-likelihood with respect to $\bm{\beta}$ is
\begin{equation*}
  \frac{\partial l(\bm{\beta})}{\partial \bm{\beta}}
  = \sum_{n = 1}^{N} \mathbf{A}_{\bm{\beta}}^{\mathrm{T}} \mathbf{F}^{\mathrm{T}}
  \mathbf{V}^{-1}
  \left(\mathbf{Y}_n
    - \mathbf{F}
    \bm{\mu}(\bm{\beta})
  \right),
\end{equation*}
where
\begin{equation*}
  \mathbf{A}_{\bm{\beta}}
  =
  \begin{pmatrix}
    \frac{\partial \mu_{1}(\bm{\beta})}{\partial \beta_{0}}
    & \ldots & \frac{\partial \mu_{1}(\bm{\beta})}{\partial
      \beta_{p - 1}} \\
    \vdots & & \vdots \\
    \frac{\partial \mu_{J}(\bm{\beta})}{\partial \beta_{0}}
    & \ldots & \frac{\partial \mu_{J}(\bm{\beta})}{\partial \beta_{p - 1}} \\
  \end{pmatrix}
\end{equation*}
denotes the Jacobian matrix of \(\bm{\mu}(\bm{\beta})\), i.e.\ the
\(J \times p\) matrix of the first derivatives with respect to
\(\bm{\beta}\).

The quality of the maximum-likelihood estimator is measured by the
Fisher information matrix \(\mathbf{M}_{\bm{\beta}}\) which is
proportional to the inverse of the asymptotic covariance matrix of
\(\hat{\bm{\beta}}\), as long as \(\bm{\beta}\) is estimable, i.e.\
\(\mathbf{A}_{\bm{\beta}}\) has full column rank \(p\) and is in the
span of \(\mathbf{F}^{\mathrm{T}}\).  By the definition of the Fisher
information for all observations the whole experiment yields
\begin{equation}
  \label{eq:informationgeneral}
  \mathbf{M}_{\bm{\beta}}
  = \Cov\left(\frac{\partial l(\bm{\beta})}{\partial
      \bm{\beta}}\right)
  = N \mathbf{A}_{\bm{\beta}}^{\mathrm{T}}
  \mathbf{F}^{\mathrm{T}} \mathbf{V}^{-1} \mathbf{F} 
  \mathbf{A}_{\bm{\beta}} \, .
\end{equation}

When the number \(I_j\) of observations is greater than \(0\) for each
time point \(t_j\), we can make use of the
representation~\eqref{eq:covoftheta2} in the unstructured situation of
Subsection~\ref{sec:sub:unstructured}.  With this representation the
information matrix simplifies to
\begin{equation}
  \mathbf{M}_{\bm{\beta}}
  = N \mathbf{A}_{\bm{\beta}}^{\mathrm{T}}
  \left(
  \sigma_{\varepsilon}^{2} \mathbf{M}_{0}^{-1}
  + \mathbf{\Sigma}_{\gamma} 
  \right)^{-1} 
  \mathbf{A}_{\bm{\beta}} ,
  \label{eq:InformationMatrix}
\end{equation}
where
\(\mathbf{M}_{0} = \mathbf{F}^{\mathrm{T}} \mathbf{F} =
\mathrm{diag}_{j=1, \ldots, J}(I_j)\) denotes the corresponding
information matrix in the linear fixed effects model of a one-way
layout with time points \(t_j\) as levels.

In general, when there may be time points \(t_j\) without observations
(\(I_j=0\)), we can give a more general representation of the core
expression \(\mathbf{F}^{\mathrm{T}} \mathbf{V}^{-1} \mathbf{F}\) in
the Fisher information matrix~\eqref{eq:informationgeneral}.

\begin{lemma}
  \label{lem:infogeneralsupport}
  Denote by
  \(\mathbf{M}_{0}^{1/2} = \mathrm{diag}_{j=1, \ldots,
    J}(\sqrt{I_j})\).  Then
  \begin{equation}
    \label{eq:infogeneralsupport}
    \mathbf{F}^{\mathrm{T}} \mathbf{V}^{-1} \mathbf{F}
    = \mathbf{M}_{0}^{1/2}
    \left(
      \sigma_{\varepsilon}^{2} \mathbf{I}_{J}
      + \mathbf{M}_{0}^{1/2} \mathbf{\Sigma}_{\gamma} \mathbf{M}_{0}^{1/2}
    \right)^{-1} 
    \mathbf{M}_{0}^{1/2} .
  \end{equation}
\end{lemma}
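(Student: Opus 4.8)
The plan is to start from the commutation relation $\mathbf{V}\mathbf{F} = \mathbf{F}\mathbf{U}$ with $\mathbf{U} = \sigma_{\varepsilon}^{2}\mathbf{I}_{J} + \mathbf{\Sigma}_{\gamma}\mathbf{M}_{0}$ noted in Subsection~\ref{sec:sub:unstructured} (recall $\mathbf{M}_{0} = \mathbf{F}^{\mathrm{T}}\mathbf{F} = \diag_{j=1,\ldots,J}(I_j)$), deduce $\mathbf{F}^{\mathrm{T}}\mathbf{V}^{-1}\mathbf{F} = \mathbf{M}_{0}\mathbf{U}^{-1}$, and then convert this non-symmetric expression into the symmetric form in the statement by an algebraic manipulation that never inverts $\mathbf{M}_{0}$ or $\mathbf{\Sigma}_{\gamma}$. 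Avoiding $\mathbf{M}_{0}^{-1}$ and $\mathbf{\Sigma}_{\gamma}^{-1}$ is precisely what makes the argument valid also when some $I_j = 0$ (so $\mathbf{M}_{0}$ is singular) or when $\mathbf{\Sigma}_{\gamma}$ is merely nonnegative definite; hence no case distinction on the support of the design is needed.

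First I would record the key identity $\mathbf{M}_{0}^{1/2}\mathbf{U} = \mathbf{W}\mathbf{M}_{0}^{1/2}$, where $\mathbf{W} = \sigma_{\varepsilon}^{2}\mathbf{I}_{J} + \mathbf{M}_{0}^{1/2}\mathbf{\Sigma}_{\gamma}\mathbf{M}_{0}^{1/2}$; this is immediate because $\mathbf{M}_{0}^{1/2}$ is diagonal and $\mathbf{M}_{0}^{1/2}\mathbf{M}_{0}^{1/2} = \mathbf{M}_{0}$. Since $\mathbf{M}_{0}^{1/2}\mathbf{\Sigma}_{\gamma}\mathbf{M}_{0}^{1/2}$ is symmetric and nonnegative definite, $\mathbf{W} \geq \sigma_{\varepsilon}^{2}\mathbf{I}_{J} > 0$ is invertible. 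I then check invertibility of $\mathbf{U}$: if $\mathbf{U}\bm{x} = \mathbf{0}$, the identity gives $\mathbf{W}\mathbf{M}_{0}^{1/2}\bm{x} = \mathbf{0}$, hence $\mathbf{M}_{0}^{1/2}\bm{x} = \mathbf{0}$, and then $\mathbf{U}\bm{x} = \sigma_{\varepsilon}^{2}\bm{x} + \mathbf{\Sigma}_{\gamma}\mathbf{M}_{0}^{1/2}(\mathbf{M}_{0}^{1/2}\bm{x}) = \sigma_{\varepsilon}^{2}\bm{x}$, so $\bm{x} = \mathbf{0}$.

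With $\mathbf{V}$, $\mathbf{U}$, $\mathbf{W}$ all invertible I can finish quickly. From $\mathbf{V}\mathbf{F} = \mathbf{F}\mathbf{U}$ I get $\mathbf{V}^{-1}\mathbf{F} = \mathbf{F}\mathbf{U}^{-1}$, hence $\mathbf{F}^{\mathrm{T}}\mathbf{V}^{-1}\mathbf{F} = \mathbf{F}^{\mathrm{T}}\mathbf{F}\mathbf{U}^{-1} = \mathbf{M}_{0}\mathbf{U}^{-1}$. From $\mathbf{M}_{0}^{1/2}\mathbf{U} = \mathbf{W}\mathbf{M}_{0}^{1/2}$ I obtain $\mathbf{M}_{0}^{1/2}\mathbf{U}^{-1} = \mathbf{W}^{-1}\mathbf{M}_{0}^{1/2}$, and therefore $\mathbf{M}_{0}\mathbf{U}^{-1} = \mathbf{M}_{0}^{1/2}\big(\mathbf{M}_{0}^{1/2}\mathbf{U}^{-1}\big) = \mathbf{M}_{0}^{1/2}\mathbf{W}^{-1}\mathbf{M}_{0}^{1/2}$, which is exactly \eqref{eq:infogeneralsupport}.

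The only genuinely delicate point is the invertibility of the non-symmetric matrix $\mathbf{U} = \sigma_{\varepsilon}^{2}\mathbf{I}_{J} + \mathbf{\Sigma}_{\gamma}\mathbf{M}_{0}$ (equivalently, that the eigenvalues of $\mathbf{\Sigma}_{\gamma}\mathbf{M}_{0}$ are real and nonnegative); routing this through its symmetric companion $\mathbf{W}$ via $\mathbf{M}_{0}^{1/2}\mathbf{U} = \mathbf{W}\mathbf{M}_{0}^{1/2}$ keeps the proof elementary and, by never invoking $\mathbf{M}_{0}^{-1}$ or $\mathbf{\Sigma}_{\gamma}^{-1}$, makes it cover uniformly the general case with possibly empty time points and a singular within-subject covariance.
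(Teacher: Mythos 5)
Your proof is correct, and it takes a genuinely different route from the paper's. The paper proves \eqref{eq:infogeneralsupport} by applying the inversion formula $\left(\mathbf{I} + \mathbf{C}\mathbf{C}^{\mathrm{T}}\right)^{-1} = \mathbf{I} - \mathbf{C}\left(\mathbf{I} + \mathbf{C}^{\mathrm{T}}\mathbf{C}\right)^{-1}\mathbf{C}^{\mathrm{T}}$ with $\mathbf{C} = \sigma_{\varepsilon}^{-1}\mathbf{H}\mathbf{\Sigma}_{\gamma}^{1/2}$, observing that $\mathbf{H}^{\mathrm{T}}\left(\sigma_{\varepsilon}^{2}\mathbf{I} + \mathbf{H}\mathbf{\Sigma}_{\gamma}\mathbf{H}^{\mathrm{T}}\right)^{-1}\mathbf{H}$ depends on $\mathbf{H}$ only through $\mathbf{H}^{\mathrm{T}}\mathbf{H} = \mathbf{M}_{0}$, so that the choices $\mathbf{H} = \mathbf{F}$ and $\mathbf{H} = \mathbf{M}_{0}^{1/2}$ yield the two sides of the identity; this requires the symmetric square root of $\mathbf{\Sigma}_{\gamma}$ but never the invertibility of any non-symmetric matrix. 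You instead recycle the interchange relation $\mathbf{V}\mathbf{F} = \mathbf{F}\mathbf{U}$ already used in Subsection~\ref{sec:sub:unstructured} for the OLS--GLS coincidence, deduce $\mathbf{F}^{\mathrm{T}}\mathbf{V}^{-1}\mathbf{F} = \mathbf{M}_{0}\mathbf{U}^{-1}$, and symmetrize via the intertwining $\mathbf{M}_{0}^{1/2}\mathbf{U} = \mathbf{W}\mathbf{M}_{0}^{1/2}$ with $\mathbf{W} = \sigma_{\varepsilon}^{2}\mathbf{I}_{J} + \mathbf{M}_{0}^{1/2}\mathbf{\Sigma}_{\gamma}\mathbf{M}_{0}^{1/2}$; your verification that $\mathbf{U}$ is invertible (kill $\mathbf{M}_{0}^{1/2}\bm{x}$ first through the positive definite $\mathbf{W}$, then conclude $\sigma_{\varepsilon}^{2}\bm{x} = \mathbf{0}$) is sound, and since you never invert $\mathbf{M}_{0}$ or $\mathbf{\Sigma}_{\gamma}$ the argument covers empty time points and singular $\mathbf{\Sigma}_{\gamma}$ just as the paper's does, without the paper's preliminary case distinction for fully supported designs. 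What each approach buys: the paper's Woodbury-type computation avoids any discussion of the non-symmetric matrix $\mathbf{U}$ and delivers both sides in one stroke, while yours is more elementary (no inversion lemma, no square root of $\mathbf{\Sigma}_{\gamma}$) and ties the lemma directly to the Zyskind-type commutation already present in the model analysis, at the modest cost of the extra invertibility check for $\mathbf{U}$.
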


The proof is given in the Appendix.  By
Lemma~\ref{lem:infogeneralsupport}, the Fisher information matrix has
the form
\begin{equation}
  \mathbf{M}_{\bm{\beta}}
  = N \mathbf{A}_{\bm{\beta}}^{\mathrm{T}} \mathbf{M}_{0}^{1/2}
  \left(
  \sigma_{\varepsilon}^{2} \mathbf{I}_{J}
  + \mathbf{M}_{0}^{1/2} \mathbf{\Sigma}_{\gamma} \mathbf{M}_{0}^{1/2}
  \right)^{-1} 
  \mathbf{M}_{0}^{1/2} \mathbf{A}_{\bm{\beta}} .
  \label{eq:InformationMatrixGeneral}
\end{equation}
This Fisher information is non-singular if \(\bm{\beta}\) is
estimable.

In the particular case of a linearly parameterized mean response, like
the straight line growth curve, the Jacobian
\(\mathbf{A}_{\bm{\beta}} = \mathbf{A}\) does not depend on
\(\bm{\beta}\) as \(\bm{\mu}(\bm{\beta}) = \mathbf{A}\bm{\beta}\).  In
this case, for a subject \(n\), the mixed effects model can be written
as
\(\mathbf{Y}_n = \mathbf{F} \mathbf{A} \bm{\beta} + \mathbf{F}
\bm{\gamma}_n + \bm{\varepsilon}_n\).  Then \(\mathbf{F} \mathbf{A}\)
is the design matrix of the fixed effects part on the individual
level.  If \(\bm{\beta}\) is estimable, the maximum-likelihood
estimator is unique and coincides with the generalized least squares
estimator
\(\hat{\bm{\beta}}_{GLS} = \frac{1}{N} \sum_{n = 1}^{N}
\left(\mathbf{A}^{\mathrm{T}} \mathbf{F}^{\mathrm{T}} \mathbf{V}^{-1}
  \mathbf{F} \mathbf{A}\right)^{-1} \mathbf{A}^{\mathrm{T}}
\mathbf{F}^{\mathrm{T}} \mathbf{V}^{-1} \mathbf{Y}_n\).  As in the
unstructured situation of the previous subsection, the generalized
least square estimator is equal to the estimator
\(\left(\mathbf{A}^{\mathrm{T}} \mathbf{F}^{\mathrm{T}}
  \mathbf{V}^{-1} \mathbf{F} \mathbf{A}\right)^{-1}
\mathbf{A}^{\mathrm{T}} \mathbf{F}^{\mathrm{T}} \mathbf{V}^{-1}
\bar{\mathbf{Y}}\) for an average individual
\(\bar{\mathbf{Y}} = \frac{1}{N} \sum_{n = 1}^{N} \mathbf{Y}_n\) as
well as to the average
\(\frac{1}{N} \sum_{n = 1}^{N} \hat{\bm{\beta}}_{n,GLS}\) of
individual fits
\(\hat{\bm{\beta}}_{n,GLS} = \left(\mathbf{A}^{\mathrm{T}}
  \mathbf{F}^{\mathrm{T}} \mathbf{V}^{-1} \mathbf{F}
  \mathbf{A}\right)^{-1} \mathbf{A}^{\mathrm{T}}
\mathbf{F}^{\mathrm{T}} \mathbf{V}^{-1} \mathbf{Y}_n\) based solely on
the observations of single subjects \(n\).  However, in contrast to
the unstructured case, the generalized least squares estimator differs
in general from the ordinary least squares estimator, which can be
seen from the example of the ratio model below.

For the exponential model, the derivatives of
\(\mathbf{A}_{\bm{\beta}}\) with respect to the {parameters}
\(\beta_{0}\), \(\beta_{1}\), and \(\beta_{2}\) are
\begin{equation}
  \label{eq:derive3PExp}
  \frac{\partial \mu_{j}(\bm{\beta})}{\partial \beta_{0}}
  = \exp( - \beta_{2} t_{j}),
  \quad \frac{\partial   \mu_{j}(\bm{\beta})}{\partial \beta_{1}}
  = 1 - \frac{\partial \mu_{j}(\bm{\beta})}{\partial \beta_{0}},
  \quad\text{and}\quad
  \frac{\partial \mu_{j}(\bm{\beta})}{\partial \beta_{2}}
  = t_j (\beta_{1} - \beta_{0})
  \frac{\partial \mu_{j}(\bm{\beta})}{\partial \beta_{0}} \, .
\end{equation}
The corresponding derivatives for the logistic model are
\begin{align}
  \label{eq:derive4PL}
  \frac{\partial \mu_{j}(\bm{\beta})}{\partial \beta_{0}}
  & = \frac{1}{1 + \exp(\beta_{2} t_{j} + \beta_{3})},
    \qquad \frac{\partial \mu_{j} (\bm{\beta})}{\partial \beta_{1}} 
    = 1 - \frac{\partial \mu_{j}(\bm{\beta})}{\partial \beta_{0}}
    ,\\ \nonumber
  \frac{\partial \mu_{j}(\bm{\beta})}{\partial \beta_{2}}
  & = t_{j} \frac{\partial \mu_{j}(\bm{\beta})}{\partial \beta_{3}} ,
    \quad \text{and} \quad 
    \frac{\partial \mu_{j}(\bm{\beta})}{\partial \beta_{3}}
    = (\beta_{1} - \beta_{0})
    \frac{\exp(\beta_{2} t_{j} + \beta_{3})}{(1 + \exp(\beta_{2} t_{j} + \beta_{3}))^2} \, . 
\end{align}
Note that in both of these two model specifications the derivatives
depend less on the specific values for \(\beta_0\) for the baseline
and \(\beta_1\) for the saturation level themselves, but on their
difference \(\beta_1 - \beta_0\).  In the particular case of a linear
model also the Fisher information
\(\mathbf{M}_{\bm{\beta}}=\mathbf{M}\) does not depend on the vector
\(\bm{\beta}\) of location parameters.

\section{Optimal Designs}
\label{sec:optimal-design}
A design, i.e.\ a plan how to conduct an experiment, usually has two
parts: Firstly we need to know under which conditions to observe,
which in our case corresponds to the time points \(t_j\).  The second
part of a design determines, how many observations should be spent at
each time point.  As mentioned at the beginning of the preceding
section, the time points \(t_{j} = j - 1\) are fixed, i.e.\ we need
not optimize with respect to the time points.  Thus, in the present
situation, a design can be described by the numbers \(I_{j}\) of
observations at each time point under the constraint that the total
number \(\sum_{j = 1}^{J} I_j\) of observations equals \(I\) and will
be denoted by \(\xi =
\begin{pmatrix}
  I_1 & \cdots & I_J
\end{pmatrix}
\).  When \(I_{j} \geq 0\) are integers, the design is called an exact
design.  Actually, we may allow one or more of the numbers \(I_j\) to
be equal to zero, which means that no observations are made at the
corresponding time points.  These kind of designs can be used
directly.  However, to optimize integers \(I_j\) is a discrete
optimization problem which may be hard to solve.  If \(I_{j} \geq 0\)
are relaxed to be real numbers, the design \(\xi\) is called an
approximate design \citep[see][]{Kiefer:1959}.  The latter is
appealing, because it is possible to use methods from convex
optimization to optimize with respect to the design.  While this
yields benchmark designs, rounding of the real numbers \(I_{j}\) to
integers may become necessary for using these designs in applications.

For an approximate design \(\xi\), the standardized (per unit)
information matrix
\(\mathbf{M}_{\bm{\beta}}(\xi) = \frac{1}{N} \mathbf{M}_{\bm{\beta}}\)
is defined by the representations \eqref{eq:InformationMatrix} and
\eqref{eq:InformationMatrixGeneral}, respectively.  While for a linear
model of the mean score, like the unstructured model of
Subsection~\ref{sec:sub:unstructured} or the straight line growth
curve model, the information matrix \(\mathbf{M}_{\bm{\beta}}(\xi)\)
does not depend on the value of the parameter vector \(\bm{\beta}\),
such a dependence will typically occur when the mean score is a
nonlinear function over time.

To measure the performance of a design, we will use the
\(D\)-criterion which is invariant with respect to reparameterization
and scaling and which aims at minimizing the volume of the asymptotic
confidence ellipsoid for estimating \(\bm{\beta}\).  Then a design
\(\xi^* = \begin{pmatrix} I_1^* & \cdots & I_J^*
\end{pmatrix}\) will be called locally \(D\)-optimal for a specific
value of \(\bm{\beta}\) if
\(\log\det(\mathbf{M}_{\bm{\beta}}(\xi^*)) \geq
\log\det(\mathbf{M}_{\bm{\beta}}(\xi))\) for all \(\xi\) with
non-singular information matrix such that \(\bm{\beta}\) is estimable,
i.e.\ for approximate designs, \(\mathbf{A}_{\bm{\beta}}\) has full
column rank \(p\) and is in the span of the standardized (per unit)
information matrix
\(\mathbf{M}_{0}(\xi) = \mathrm{diag}_{j = 1, \ldots, J}(I_j)\) for
\(\xi\) in the corresponding one-way layout model.

\begin{lemma}
  \label{lem:concave}
  The \(D\)-criterion \(\log\det(\mathbf{M}_{\bm{\beta}}(\xi))\) is a
  continuous, concave function of \(\xi\) on the set of all designs
  for which \(\bm{\beta}\) is estimable.
\end{lemma}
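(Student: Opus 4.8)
The plan is to establish the two properties — continuity and concavity — separately, reducing everything to the structure of the information matrix in \eqref{eq:InformationMatrixGeneral}. Write $\mathbf{M}_{\bm{\beta}}(\xi) = \mathbf{A}_{\bm{\beta}}^{\mathrm{T}}\, \mathbf{G}(\xi)\, \mathbf{A}_{\bm{\beta}}$, where $\mathbf{G}(\xi) = \mathbf{M}_0^{1/2}\bigl(\sigma_\varepsilon^2 \mathbf{I}_J + \mathbf{M}_0^{1/2}\mathbf{\Sigma}_\gamma \mathbf{M}_0^{1/2}\bigr)^{-1}\mathbf{M}_0^{1/2}$ and $\mathbf{M}_0 = \mathbf{M}_0(\xi) = \diag_{j=1,\ldots,J}(I_j)$. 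Since $\mathbf{A}_{\bm{\beta}}$ is fixed once $\bm{\beta}$ is fixed, concavity and continuity of $\xi \mapsto \log\det(\mathbf{M}_{\bm{\beta}}(\xi))$ will follow from the corresponding properties of $\xi \mapsto \log\det(\mathbf{A}_{\bm{\beta}}^{\mathrm{T}}\mathbf{G}(\xi)\mathbf{A}_{\bm{\beta}})$ on the (convex) set of designs for which $\bm{\beta}$ is estimable.

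For continuity, I would note that each entry of $\mathbf{G}(\xi)$ is a rational function of $(I_1,\ldots,I_J)$ with denominator $\det\bigl(\sigma_\varepsilon^2\mathbf{I}_J + \mathbf{M}_0^{1/2}\mathbf{\Sigma}_\gamma\mathbf{M}_0^{1/2}\bigr) \ge \sigma_\varepsilon^{2J} > 0$, so $\mathbf{G}(\xi)$ depends continuously on $\xi$ on all of $\{\xi : I_j \ge 0\}$; composing with the continuous map $\mathbf{B} \mapsto \log\det(\mathbf{A}_{\bm{\beta}}^{\mathrm{T}}\mathbf{B}\mathbf{A}_{\bm{\beta}})$ on the set of $\mathbf{B}$ for which this is positive definite gives continuity on the estimability set. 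For concavity, the cleanest route is to lift back to the representation \eqref{eq:informationgeneral}: for an approximate design, $\mathbf{M}_{\bm{\beta}}(\xi) = \mathbf{A}_{\bm{\beta}}^{\mathrm{T}}\mathbf{F}^{\mathrm{T}}\mathbf{V}(\xi)^{-1}\mathbf{F}\mathbf{A}_{\bm{\beta}}$, but a more transparent observation is that $\mathbf{M}_{\bm{\beta}}(\xi)$ is \emph{linear} in the design weights. Indeed, from \eqref{eq:InformationMatrix}, when all $I_j>0$ one has $\mathbf{M}_{\bm{\beta}}(\xi) = \mathbf{A}_{\bm{\beta}}^{\mathrm{T}}\bigl(\sigma_\varepsilon^2 \mathbf{M}_0(\xi)^{-1} + \mathbf{\Sigma}_\gamma\bigr)^{-1}\mathbf{A}_{\bm{\beta}}$, and the map $\mathbf{M}_0 \mapsto (\sigma_\varepsilon^2\mathbf{M}_0^{-1}+\mathbf{\Sigma}_\gamma)^{-1}$ is matrix-concave in $\mathbf{M}_0 \ge 0$ (it is the parallel-sum / harmonic-type combination, concave by the standard operator-concavity of $\mathbf{X}\mapsto \mathbf{X}(\mathbf{X}+\mathbf{C})^{-1}\mathbf{X}$-type expressions), while $\mathbf{M}_0(\xi) = \diag(I_j)$ is linear in $\xi$; hence $\xi \mapsto \mathbf{M}_{\bm{\beta}}(\xi)$ is matrix-concave. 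Since $\mathbf{B}\mapsto \log\det(\mathbf{B})$ is concave and monotone on positive definite matrices, the composition $\xi\mapsto\log\det(\mathbf{M}_{\bm{\beta}}(\xi))$ is concave.

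The main obstacle is handling designs on the boundary where some $I_j=0$, since then the shortcut representation \eqref{eq:InformationMatrix} via $\mathbf{M}_0^{-1}$ is unavailable and I must argue directly with $\mathbf{G}(\xi)$ from \eqref{eq:InformationMatrixGeneral}. The remedy is to use $\mathbf{G}(\xi)$ throughout: the map $\mathbf{M}_0^{1/2} \mapsto \mathbf{M}_0^{1/2}(\sigma_\varepsilon^2\mathbf{I}_J + \mathbf{M}_0^{1/2}\mathbf{\Sigma}_\gamma\mathbf{M}_0^{1/2})^{-1}\mathbf{M}_0^{1/2}$ is of the operator-concave form $\mathbf{S}\mapsto \mathbf{S}(\sigma_\varepsilon^2\mathbf{I}+\mathbf{S}\mathbf{\Sigma}_\gamma\mathbf{S})^{-1}\mathbf{S}$, which can be shown to be concave in $\mathbf{S}=\mathbf{M}_0^{1/2}$; but since $\mathbf{S}$ is not linear in $\xi$ (it is $\diag(\sqrt{I_j})$), I would instead invoke the equivalent Schur-complement identity $\mathbf{G}(\xi) = \mathbf{M}_0 - \mathbf{M}_0\mathbf{\Sigma}_\gamma^{1/2}(\sigma_\varepsilon^2\mathbf{I}_J + \mathbf{\Sigma}_\gamma^{1/2}\mathbf{M}_0\mathbf{\Sigma}_\gamma^{1/2})^{-1}\mathbf{\Sigma}_\gamma^{1/2}\mathbf{M}_0$ — valid for all $\mathbf{M}_0 \ge 0$ by the matrix-inversion lemma, whether or not $\mathbf{M}_0$ is invertible — which exhibits $\mathbf{G}$ as $\mathbf{M}_0$ minus a term, and $\mathbf{M}_0\mapsto\mathbf{M}_0\mathbf{\Sigma}_\gamma^{1/2}(\sigma_\varepsilon^2\mathbf{I}+\mathbf{\Sigma}_\gamma^{1/2}\mathbf{M}_0\mathbf{\Sigma}_\gamma^{1/2})^{-1}\mathbf{\Sigma}_\gamma^{1/2}\mathbf{M}_0$ is matrix-\emph{convex} in $\mathbf{M}_0$ (again the standard $\mathbf{X}(\mathbf{C}+\mathbf{X})^{-1}\mathbf{X}$-type convexity, now with $\mathbf{X}=\mathbf{\Sigma}_\gamma^{1/2}\mathbf{M}_0\mathbf{\Sigma}_\gamma^{1/2}$). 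Thus $\mathbf{G}(\xi)$ is matrix-concave and linear-free of the boundary issue, and the concavity of $\log\det(\mathbf{A}_{\bm{\beta}}^{\mathrm{T}}\mathbf{G}(\xi)\mathbf{A}_{\bm{\beta}})$ follows as before. Finally I would remark that the estimability set is convex (it is defined by $\mathbf{A}_{\bm{\beta}}$ lying in the span of $\mathbf{M}_0(\xi)$, i.e.\ $I_j>0$ for the relevant coordinates, an intersection of open half-spaces with the simplex), so the claim is meaningful on the stated domain.
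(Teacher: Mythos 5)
Your proof is essentially correct, but it takes a genuinely different route from the paper. The paper's proof is very short: continuity is read off from the general representation \eqref{eq:InformationMatrixGeneral}, concavity on the set of \emph{fully supported} designs is obtained by citing Lemma~2 of \citet{Schmelter:2007}, and concavity is then extended to boundary designs (some \(I_j=0\)) in the estimability set via continuity. You instead give a self-contained argument: you use the matrix-inversion-lemma (Schur-complement) identity for \(\mathbf{G}(\xi)\) --- which is in fact exactly the identity the paper establishes in its proof of Lemma~\ref{lem:infogeneralsupport} --- to write \(\mathbf{G}(\xi)\) as \(\mathbf{M}_0(\xi)\) minus a term of matrix-fractional type \((\mathbf{A},\mathbf{C})\mapsto \mathbf{A}\mathbf{C}^{-1}\mathbf{A}^{\mathrm{T}}\) that is affine in \(\mathbf{M}_0(\xi)\) in both arguments, hence matrix-convex; this makes \(\xi\mapsto\mathbf{M}_{\bm{\beta}}(\xi)\) matrix-concave on all of \(\{I_j\ge 0\}\), and composition with the monotone concave \(\log\det\) finishes the proof. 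This buys you two things the paper does not provide: no reliance on the external reference, and no separate limiting/extension step for boundary designs, since your representation is valid for singular \(\mathbf{M}_0\). Three small blemishes, none fatal: (i) your stated identity for \(\mathbf{G}(\xi)\) is missing the factor \(\sigma_{\varepsilon}^{-2}\) in front (compare the paper's Appendix), which is irrelevant for concavity; (ii) the parenthetical in your first concavity argument is garbled --- maps of the form \(\mathbf{X}\mapsto\mathbf{X}(\mathbf{X}+\mathbf{C})^{-1}\mathbf{X}\) are matrix-\emph{convex}, and it is the complementary parallel-sum term \(\mathbf{X}-\mathbf{X}(\mathbf{X}+\mathbf{C})^{-1}\mathbf{X}\) that is concave; your boundary argument uses the correct orientation, so this is only a wording issue; (iii) the estimability set is not an intersection of open half-spaces with the simplex --- it is the set of \(\xi\) for which the rows of \(\mathbf{A}_{\bm{\beta}}\) indexed by the support of \(\xi\) have rank \(p\) --- but it is nevertheless convex, because the support of a convex combination contains the supports of its components, so your conclusion stands with a corrected justification.
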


\begin{proof}
  The continuity can be seen from the general representation of the
  Fisher information matrix in~\eqref{eq:InformationMatrixGeneral} .
  For fully supported designs, which take \(I_j > 0\) observations at
  each of the \(J\) time points \(j=1, \ldots, J\), the concavity
  follows directly from Lemma~2 in \citet{Schmelter:2007}.  The
  concavity extends to all designs for which \(\bm{\beta}\) is
  estimable by the continuity of the criterion.
\end{proof}

For a given design \(\xi\), its performance can be compared to the
optimal design \(\xi^*\) in terms of the efficiency which is defined
by
\begin{equation*}
  \left(
    \frac{\det(\mathbf{M}_{\bm{\beta}}(\xi))}{\det(\mathbf{M}_{\bm{\beta}}(\xi^*))}
  \right)^{1/p} .
\end{equation*}
Here \(\det(\mathbf{M})^{1/p}\) is the homogeneous version of the
\(D\)-criterion such that the efficiency can be interpreted as the
proportion of units needed when the optimal design \(\xi^*\) is used
to obtain the same precision as for the design \(\xi\) under
consideration.

\subsection{The Case \(J = p\)}
\label{sec:sub:saturated}
If the number \(J\) of time points is equal to the number \(p\) of
parameters, then $\mathbf{A}_{\bm{\beta}}$ is a quadratic matrix and
consequently
\begin{equation}
  \label{eq:detpeqJ}
  \log\det(\mathbf{M}_{\bm{\beta}}(\xi))
  = - \log\det\left(\sigma_{\varepsilon}^{2}\mathbf{M}_{0}(\xi)^{-1} + \mathbf{\Sigma}_{\gamma}\right) 
  + c ,
\end{equation}
where \(c\) denotes a generic constant not depending on \(\xi\).
This covers also the situation of unstructured time dependence 
of Subsection~\ref{sec:sub:unstructured} when \(\mathbf{A}_{\bm{\beta}} = \mathbf{I}_J\).

Note that, in general, the matrix \(\mathbf{A}_{\bm{\beta}}\) has to
be non-singular in order to allow for estimability of the parameter
vector.  Since the time points are fixed, the determinant of the
Jacobian matrix \(\mathbf{A}_{\bm{\beta}}\) is constant with respect
to the design \(\xi\).  Hence, the locally \(D\)-optimal design
\(\xi^*\) does not depend on $\bm{\beta}$.  In particular, the design
\(\xi^*\) maximizes \(\log\det(\mathbf{M}_{\bm{\beta}}(\xi))\) if it
minimizes \(\log\det(\Cov_{\xi}(\hat{\bm{\mu}}))\), i.e.\ it coincides
with the \(D\)-optimal design to estimate $\bm{\mu}$ in the case of
the unstructured model in Subsection~\ref{sec:sub:unstructured},
independent of the parameterization
\(\bm{\mu} = \bm{\mu}(\bm{\beta})\).

In the case of compound symmetry, the covariance matrix
\(\mathbf{\Sigma}_{\gamma,CS}\) is invariant with respect to
permutations of the time points.  This is also reflected in the
criterion
\begin{align*}
  \log\det(\mathbf{M}_{\bm{\beta}}(\xi))
  & = - \log\det\left(
    {\textstyle{\diag_{j = 1, \ldots, J}}}\left(1 - \rho + {\textstyle{\frac{1}{\tau^{2} I_{j}}}}\right)
    + \rho \mathbf{1}_{J} \mathbf{1}_{J}^{\mathrm{T}}
    \right) + c \\
  &
    =
    - \sum_{j = 1}^{J} \log\left(1 - \rho + {\textstyle{\frac{1}{\tau^2 I_{j}}}}\right)
    - \log\left(1 + \rho \sum_{j = 1}^{J} \left(1 - \rho + {\textstyle{\frac{1}{\tau^2 I_j}}}\right)^{-1}\right) + c
\end{align*}
to be optimized which is also left unchanged by permutations of the
entries \(I_{j}\) in the design.  As a consequence of the concavity,
the optimal design \(\xi^*\) should be as balanced as possible, which
in terms of the \(I_{j}^*\) can be written as
\(|I_{j}^* - I_{j'}^*| \leq 1\), \(j \neq j'\).  More formally, denote
by \([a]\) the integer part of \(a\) and by \(m \bmod n = m - n[m/n]\)
the remainder of \(m\) with respect to division by \(n\) for integers
\(m\) and \(n\).
\begin{lemma}
  \label{lem:compsymoptdes2}
  Let \(J = p\) and the covariance structure be compound symmetry,
  then an exact design is \(D\)-optimal if \(I_j^*=[I/J]+1\) for
  \(J'=I \bmod J\) time points and \(I_j^*=[I/J]\) for \(J-J'\) time
  points.
  
  In particular, if \(I\) is a multiple of \(J\), then the uniform
  design is \(D\)-optimal which assigns \(I_J^* = I/J\) items to each
  of the time points \(j=1, \ldots, J\).
\end{lemma}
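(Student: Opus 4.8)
The plan is to exploit the concavity of the $D$-criterion established in Lemma~\ref{lem:concave} together with the permutation invariance of the objective function displayed just before the statement. Since the criterion
\[
  \phi(\xi) = - \sum_{j = 1}^{J} \log\!\left(1 - \rho + {\textstyle{\frac{1}{\tau^2 I_{j}}}}\right)
    - \log\!\left(1 + \rho \sum_{j = 1}^{J} \left(1 - \rho + {\textstyle{\frac{1}{\tau^2 I_j}}}\right)^{-1}\right)
\]
is symmetric in the arguments $I_1, \ldots, I_J$ and concave, a standard majorization argument shows that among real-valued designs with $\sum_j I_j = I$ the maximum is attained at the uniform allocation $I_j = I/J$; this already settles the second assertion once one checks that the uniform design is admissible (it has $I_j > 0$, hence $\mathbf{M}_0(\xi)$ is non-singular and $\bm{\beta}$ is estimable). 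For the first assertion I would argue that, on the discrete set of exact designs, concavity plus symmetry force an optimal design to be \emph{as balanced as possible}: if some optimal $\xi^*$ had two coordinates with $I_{j}^* - I_{j'}^* \geq 2$, replacing the pair $(I_j^*, I_{j'}^*)$ by $(I_j^* - 1, I_{j'}^* + 1)$ keeps the sum fixed and, by concavity along the line segment joining $\xi^*$ to the design with that pair swapped, cannot decrease $\phi$ — and a strict-concavity check shows it strictly increases it. Iterating removes all gaps of size $\geq 2$, and the only balanced integer allocations with $\sum_j I_j = I$ are exactly those with $J' = I \bmod J$ coordinates equal to $[I/J]+1$ and the remaining $J - J'$ equal to $[I/J]$.

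More carefully, the ``balancing move'' step should be phrased directly in terms of the single-variable function $g(x) = -\log(1 - \rho + \tfrac{1}{\tau^2 x})$ and its increments: writing $\phi$ as a function of the $J$ quantities $u_j = (1 - \rho + \tfrac{1}{\tau^2 I_j})^{-1}$, one notes that $\phi$ is increasing and concave in each $u_j$ and that $u_j$ is concave and increasing in $I_j$; so $\phi$ is a concave function of $\xi$ on the lattice, and for a concave symmetric function the discrete maximizer over a fixed-sum simplex lies at the ``rounded uniform'' point. I would make this rigorous by the classical argument: given any exact $\xi$ with a gap $I_j - I_{j'} \geq 2$, the concavity of $t \mapsto \phi\big((1-t)\xi + t\,\xi^{(jj')}\big)$ on $[0,1]$, where $\xi^{(jj')}$ swaps the $j$ and $j'$ entries, combined with $\phi(\xi) = \phi(\xi^{(jj')})$, yields $\phi$ at the midpoint $\geq \phi(\xi)$; but the midpoint differs from the balanced integer design $(I_j - 1, I_{j'} + 1)$ only by a further concave-interpolation comparison that pushes value up, not down. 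Then invoke that the only exact designs with no gap exceeding $1$ are the claimed ones (they are all permutations of one another, hence all optimal).

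The main obstacle I anticipate is the strictness: concavity alone gives ``$\geq$'', so one must verify that the inequality in the balancing move is \emph{strict} whenever the gap exceeds $1$, which is what pins down the optimal designs as precisely the balanced ones rather than merely showing the balanced ones are among the optima. For this I would compute that $\phi$ is \emph{strictly} concave as a function of the vector $(I_1, \ldots, I_J)$ on the relevant domain — the first sum $\sum_j g(I_j)$ is strictly concave in each coordinate since $g'' < 0$ for $x > 0$, and the second term is concave (its Hessian is negative semidefinite), so the total is strictly concave — and strict concavity makes the balancing move strictly improving. A minor additional point to dispatch is that $\bm{\beta}$ is estimable for every exact design appearing in the argument, which holds because all intermediate designs keep $I_j \geq [I/J] \geq 1$ (assuming $I \geq J$, which is necessary for estimability and may be assumed throughout), so $\mathbf{M}_0(\xi)$ is invertible and, by \eqref{eq:detpeqJ}, the criterion is finite at each step.
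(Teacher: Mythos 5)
Your route is essentially the paper's: the authors give no detailed proof beyond the remark that the result follows by standard arguments from the concavity of the $D$-criterion (Lemma~\ref{lem:concave}) and the exchangeability of the time points under compound symmetry, and your permutation-invariance-plus-balancing argument is exactly that. For the sufficiency that the lemma actually claims, plain concavity already suffices: with $t=1/(I_j-I_{j'})$ the transfer design with entries $(I_j-1,\,I_{j'}+1)$ is the convex combination $(1-t)\xi+t\,\xi^{(jj')}$, so symmetry and concavity give $\phi(\xi')\ge(1-t)\phi(\xi)+t\,\phi(\xi^{(jj')})=\phi(\xi)$; iterating (each transfer strictly decreases $\sum_j I_j^2$, so it terminates, and the donor coordinate exceeds the recipient's by at least two, so every intermediate design stays fully supported) reaches a most-balanced allocation, and since all most-balanced allocations are permutations of one another they share the same criterion value, hence each is optimal among exact designs; the case $J'=0$ gives the uniform design.

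Two corrections, neither fatal. First, your strict-concavity check is both unnecessary and incorrect as argued: the lemma asserts only that the balanced designs are optimal, not that they are the unique optima, so strictness is not needed; and the second term $-\log\bigl(1+\rho\sum_{j}(1-\rho+\frac{1}{\tau^2 I_j})^{-1}\bigr)$ is convex, not concave, in $(I_1,\ldots,I_J)$, since each $u_j=(1-\rho+\frac{1}{\tau^2 I_j})^{-1}$ is concave increasing in $I_j$ and $s\mapsto-\log(1+\rho s)$ is convex and decreasing, and a convex nonincreasing function of a concave function is convex; indeed, for $0<\rho<1$, fixing all but one coordinate it is strictly convex in the remaining $I_j$, so its Hessian cannot be negative semidefinite. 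The concavity you need in the balancing step is still available, but from Lemma~\ref{lem:concave} (via Schmelter), not from your term-by-term decomposition. Second, a small slip: intermediate designs in the iteration need not satisfy $I_j\ge[I/J]$; what is needed, and what does hold when you start from an estimable exact design, is merely that all coordinates remain at least $1$, so that $\mathbf{M}_0(\xi)$ stays invertible and the criterion \eqref{eq:detpeqJ} remains finite along the way.
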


This result can be obtained by standard arguments from the concavity
of the \(D\)-criterion and the exchangeability of the time points
under compound symmetry.

By the strict concavity of the \(D\)-criterion, we obtain for the
approximate design

\begin{lemma}
  \label{lem:compsymoptdes1}
  Let \(J = p\) and the covariance structure be compound symmetry,
  then
  \begin{equation*}
    \xi^* =
    \begin{pmatrix}
      \frac{I}{J} & \cdots & \frac{I}{J}
    \end{pmatrix}
  \end{equation*}
  is the \(D\)-optimal approximate design.
\end{lemma}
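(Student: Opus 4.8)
The plan is to combine the permutation invariance of the $D$-criterion under compound symmetry with the concavity established in Lemma~\ref{lem:concave}, strengthened to strict concavity on the interior, so that a symmetrization argument pins down the uniform design as the unique maximizer.

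First I would record that for $J=p$ the criterion takes, by~\eqref{eq:detpeqJ}, the form $\phi(\xi):=\log\det(\mathbf{M}_{\bm{\beta}}(\xi)) = -\log\det(C(\xi)) + c$ with $C(\xi) = \sigma_{\varepsilon}^{2}\diag_{j=1,\ldots,J}(1/I_{j}) + \mathbf{\Sigma}_{\gamma,CS}$. This is finite and continuous on the open simplex $\Delta = \{\xi : I_{j}>0,\ \sum_{j=1}^{J} I_{j}=I\}$ and tends to $-\infty$ as any $I_{j}\downarrow 0$, so a maximizer exists and lies in $\Delta$. Next I would verify strict concavity on $\Delta$: for $\xi_{0}\neq\xi_{1}$ in $\Delta$ and $\xi_{\lambda}=(1-\lambda)\xi_{0}+\lambda\xi_{1}$, strict convexity of $t\mapsto 1/t$ gives $C(\xi_{\lambda}) \leq M:=(1-\lambda)C(\xi_{0})+\lambda C(\xi_{1})$ where $M-C(\xi_{\lambda})$ is a nonzero positive-semidefinite diagonal matrix; since adding a nonzero positive-semidefinite matrix to a positive definite one strictly increases the determinant, $\det C(\xi_{\lambda}) < \det M$, and combining this with convexity of $-\log\det$ on positive definite matrices yields $\phi(\xi_{\lambda}) > (1-\lambda)\phi(\xi_{0}) + \lambda\phi(\xi_{1})$.

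Then I would exploit symmetry. Because $\mathbf{\Sigma}_{\gamma,CS} = \sigma_{\gamma}^{2}((1-\rho)\mathbf{I}_{J}+\rho\mathbf{1}_{J}\mathbf{1}_{J}^{\mathrm{T}})$ is invariant under any simultaneous permutation of its rows and columns, while $\diag_{j}(1/I_{j})$ is merely permuted accordingly, one has $\phi(P_{\pi}\xi) = \phi(\xi)$ for every permutation matrix $P_{\pi}$ on $\{1,\ldots,J\}$. Hence if $\xi^{\dagger}\in\Delta$ is any optimal design, then $P_{\pi}\xi^{\dagger}$ is optimal for every $\pi$, and by concavity their average $\frac{1}{J!}\sum_{\pi}P_{\pi}\xi^{\dagger}$ — which is exactly $\xi^{*}=(I/J,\ldots,I/J)$ — satisfies $\phi(\xi^{*}) \geq \frac{1}{J!}\sum_{\pi}\phi(P_{\pi}\xi^{\dagger}) = \phi(\xi^{\dagger}) = \max_{\Delta}\phi$, so $\xi^{*}$ is itself optimal. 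Finally, strict concavity forces the maximizer to be unique, whence $\xi^{\dagger}=\xi^{*}$ and $\xi^{*}$ is the $D$-optimal approximate design.

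I expect the only genuine obstacle to be the strict-concavity step, since Lemma~\ref{lem:concave} supplies only concavity; one must be careful that although the Loewner-order inequality $C(\xi_{\lambda})\leq M$ is in general strict only on the coordinate subspace indexed by the components in which $\xi_{0}$ and $\xi_{1}$ differ, this still suffices because the determinant is strictly monotone under adding a nonzero positive-semidefinite perturbation to a positive definite matrix. Everything else is routine. As an alternative one could instead invoke the equivalence theorem to confirm optimality of $\xi^{*}$ directly, but uniqueness would still require the strict-concavity remark, so I would keep the symmetrization route.
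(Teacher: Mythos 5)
Your overall route---permutation invariance of the criterion under compound symmetry, concavity, and symmetrization over all permutations of a design---is essentially the paper's own argument, which appeals to ``standard arguments from the concavity of the \(D\)-criterion and the exchangeability of the time points'' and then invokes strict concavity for the approximate-design statement. The optimality part of your proof is sound and even simpler than you make it: writing \(\phi(\xi)=\log\det(\mathbf{M}_{\bm{\beta}}(\xi))=-\log\det C(\xi)+c\) with \(C(\xi)=\sigma_\varepsilon^2\mathbf{M}_0(\xi)^{-1}+\mathbf{\Sigma}_\gamma\) as in \eqref{eq:detpeqJ}, Lemma~\ref{lem:concave} and the invariance \(\phi(P_\pi\xi)=\phi(\xi)\) give, for \emph{any} fully supported \(\xi\), \(\phi(\xi^*)=\phi\bigl(\frac{1}{J!}\sum_\pi P_\pi\xi\bigr)\ge\frac{1}{J!}\sum_\pi\phi(P_\pi\xi)=\phi(\xi)\), so the uniform design is optimal without needing your existence argument or any strictness.

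The genuine gap is in your strict-concavity step, which is what you need for uniqueness (and is what the paper asserts). From \(C(\xi_\lambda)\le M:=(1-\lambda)C(\xi_0)+\lambda C(\xi_1)\) you correctly get \(\det C(\xi_\lambda)<\det M\), but ``combining this with convexity of \(-\log\det\)'' does not work: convexity gives \(-\log\det M\le(1-\lambda)(-\log\det C(\xi_0))+\lambda(-\log\det C(\xi_1))\), i.e.\ both of your facts bound quantities by \(\log\det M\) from the same side (\(\log\det C(\xi_\lambda)<\log\det M\) and \((1-\lambda)\log\det C(\xi_0)+\lambda\log\det C(\xi_1)\le\log\det M\)), so they cannot be chained into \(\phi(\xi_\lambda)>(1-\lambda)\phi(\xi_0)+\lambda\phi(\xi_1)\). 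A Loewner comparison of the \(C\)-matrices alone cannot detect strict concavity of \(\xi\mapsto-\log\det C(\xi)\). The standard repair is to argue on the information matrix itself: \(\xi\mapsto C(\xi)^{-1}\) is matrix-concave in \(\xi\) (this is what underlies Lemma~\ref{lem:concave} via Lemma~2 of \citet{Schmelter:2007}), and \(\log\det\) is increasing and \emph{strictly} concave on positive definite matrices; since \(\xi_0\neq\xi_1\) forces \(C(\xi_0)^{-1}\neq C(\xi_1)^{-1}\), one obtains \(\log\det C(\xi_\lambda)^{-1}\ge\log\det\bigl((1-\lambda)C(\xi_0)^{-1}+\lambda C(\xi_1)^{-1}\bigr)>(1-\lambda)\log\det C(\xi_0)^{-1}+\lambda\log\det C(\xi_1)^{-1}\). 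With that replacement your uniqueness conclusion, and hence the lemma in the strict form the paper intends, goes through; as written, your argument establishes optimality of the uniform design but not its uniqueness.
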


In the case of autoregressive covariance, there is less symmetry
structure available in the covariance matrix
\(\mathbf{\Sigma}_{\gamma,AR}\).  Only time reversal does not alter
the covariance matrix and, hence, the determinant.  For time reversal,
the frequencies \(I_{j}\) and \(I_{J - j + 1}\) have to be
interchanged within each symmetric pair of time points
\((j, J - j + 1)\), \(1 \leq j \leq J / 2\).
\begin{lemma}
  \label{lem:ar-symmetry}
  Let \(J = p\) and the covariance structure be autoregressive, then
  the optimal approximate design is symmetric in time, i.e.\ it is of
  the form
  \begin{equation*}
    \xi^* =
    \begin{pmatrix}
      I_{1}^* & I_{2}^* & \ldots & I_{J/2}^* & I_{J/2+1}^* & \cdots & I_{2}^* & I_{1}^*
    \end{pmatrix} 
  \end{equation*}
  if \(J\) is even, and
  \begin{equation*}
    \xi^* =
    \begin{pmatrix}
      I_{1}^* & I_{2}^* & \cdots & I_{(J+1)/2}^* & \cdots & I_{2}^* & I_{1}^*
    \end{pmatrix}
\end{equation*}
if \(J\) is odd.\end{lemma}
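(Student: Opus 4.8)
The plan is to exploit the symmetry of $\mathbf{\Sigma}_{\gamma,AR}$ under time reversal together with the strict concavity of the $D$-criterion (Lemma~\ref{lem:concave}), in exactly the same spirit as the compound-symmetry argument behind Lemma~\ref{lem:compsymoptdes1}. First I would make the time-reversal symmetry precise: let $\mathbf{P}$ denote the $J\times J$ permutation matrix that reverses the order of coordinates, i.e.\ $\mathbf{P}$ has ones on the anti-diagonal. Since the $(j,j')$ entry of $\mathbf{\Sigma}_{\gamma,AR}$ is $\sigma_\gamma^2\rho^{|j-j'|}$ and $|(J-j+1)-(J-j'+1)| = |j-j'|$, one has $\mathbf{P}\mathbf{\Sigma}_{\gamma,AR}\mathbf{P} = \mathbf{\Sigma}_{\gamma,AR}$. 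Likewise, for the reversed design $\xi^{\mathrm{rev}}$ obtained by interchanging $I_j$ with $I_{J-j+1}$, we have $\mathbf{M}_0(\xi^{\mathrm{rev}}) = \mathbf{P}\,\mathbf{M}_0(\xi)\,\mathbf{P}$, because $\mathbf{M}_0(\xi)$ is the diagonal matrix $\diag_{j}(I_j)$ and conjugation by $\mathbf{P}$ just reverses the diagonal entries.

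Next I would show the $D$-criterion is invariant under this reversal. Using the representation \eqref{eq:detpeqJ} valid for $J=p$,
\begin{equation*}
  \log\det(\mathbf{M}_{\bm{\beta}}(\xi^{\mathrm{rev}}))
  = -\log\det\!\left(\sigma_\varepsilon^2 \mathbf{M}_0(\xi^{\mathrm{rev}})^{-1} + \mathbf{\Sigma}_{\gamma,AR}\right) + c .
\end{equation*}
Substituting $\mathbf{M}_0(\xi^{\mathrm{rev}})^{-1} = \mathbf{P}\,\mathbf{M}_0(\xi)^{-1}\mathbf{P}$ and $\mathbf{\Sigma}_{\gamma,AR} = \mathbf{P}\mathbf{\Sigma}_{\gamma,AR}\mathbf{P}$, then factoring $\mathbf{P}$ out on both sides and using $\det(\mathbf{P})^2 = 1$, the determinant is unchanged, so $\log\det(\mathbf{M}_{\bm{\beta}}(\xi^{\mathrm{rev}})) = \log\det(\mathbf{M}_{\bm{\beta}}(\xi))$. (For the general-support case one could instead invoke \eqref{eq:InformationMatrixGeneral} with $\mathbf{A}_{\bm{\beta}}$ square and note that $\det(\mathbf{A}_{\bm{\beta}})$ is constant in $\xi$; but since $J=p$ here, \eqref{eq:detpeqJ} is the cleanest route, with the convention that a vanishing $I_j$ sends the criterion to $-\infty$, consistent with estimability failing.)

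The final step is the standard averaging argument. Let $\xi^*$ be a $D$-optimal approximate design; it exists by continuity and concavity on the (compact, after normalization) set of designs with $\bm{\beta}$ estimable. Its reversal $\xi^{*,\mathrm{rev}}$ is then also $D$-optimal by the invariance just shown. By Lemma~\ref{lem:concave} the criterion is concave, so the midpoint design $\bar\xi = \tfrac12(\xi^* + \xi^{*,\mathrm{rev}})$ satisfies $\log\det(\mathbf{M}_{\bm{\beta}}(\bar\xi)) \ge \tfrac12\log\det(\mathbf{M}_{\bm{\beta}}(\xi^*)) + \tfrac12\log\det(\mathbf{M}_{\bm{\beta}}(\xi^{*,\mathrm{rev}})) = \log\det(\mathbf{M}_{\bm{\beta}}(\xi^*))$, hence $\bar\xi$ is also optimal; and $\bar\xi$ is by construction symmetric in time, $\bar I_j = \bar I_{J-j+1}$. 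This proves existence of a time-symmetric optimal design, which is the content of the two displayed forms (the even-$J$ and odd-$J$ cases differ only in whether there is a single central coordinate $I_{(J+1)/2}$ or a central symmetric pair). I expect the main obstacle to be purely bookkeeping: verifying the conjugation identities $\mathbf{P}\mathbf{\Sigma}_{\gamma,AR}\mathbf{P} = \mathbf{\Sigma}_{\gamma,AR}$ and $\mathbf{M}_0(\xi^{\mathrm{rev}}) = \mathbf{P}\mathbf{M}_0(\xi)\mathbf{P}$ carefully, and handling the boundary designs with some $I_j = 0$ so that concavity may be applied on the closure — both of which are routine given Lemma~\ref{lem:concave}. (If one wants uniqueness of the symmetric optimal design, strict concavity of $\log\det$ along the segment joining $\xi^*$ and $\xi^{*,\mathrm{rev}}$, as used for Lemma~\ref{lem:compsymoptdes1}, forces $\xi^* = \xi^{*,\mathrm{rev}}$ directly; the present statement, however, only asserts that an optimal design of the symmetric form exists.)
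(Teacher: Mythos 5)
Your argument is exactly the one the paper intends: the paper gives no detailed proof, stating only that the result ``follows by concavity and invariance of the \(D\)-criterion,'' and your permutation-matrix formulation of time-reversal invariance of \(\mathbf{\Sigma}_{\gamma,AR}\) and \(\mathbf{M}_0(\xi)\), combined with the midpoint/averaging step via Lemma~\ref{lem:concave} (and strict concavity if one wants the optimal design itself to be symmetric), is a correct fleshing-out of that sketch. No gaps.
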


Also this result follows by concavity and invariance of the
\(D\)-criterion.

In the particular case of \(J=3\) time points, under the assumption of
an autoregressive correlation structure, the optimal design has the
form
\(\xi^* = \begin{pmatrix} I_{1}^* & I_{2}^* & I_{1}^* \end{pmatrix}\),
where \(I_2^*=I-2I_1^*\).  Hence, optimization has only to be done
with respect to the single variable \(I_1\).  Moreover, in terms of
weights \(w_j = I_j/I\), the optimal weight \(w_1^*\) depends on the
sample size \(I\) and the variance components
\(\sigma_{\varepsilon}^2\) and \(\sigma_{\gamma}^2\) only through the
standardized variance ratio
\(a=I\tau^2=I\sigma_{\gamma}^2/\sigma_{\varepsilon}^2\).  The optimal
weight \(w_1^*\) can then be obtained in dependence on the
standardized variance ratio \(a\) and the correlation \(\rho\) as the
root of the cubic polynomial
\((a^2 w (1 - w) + a + 1) (1 - 3w) - a^2 \rho^2 w (1 - 2w)^2 + a^2
\rho^4 w^3\) in \(0 < w \leq 1/2\).  Note that for the uncorrelated
case (\(\rho=0\)) and for the case of a constant subject effect
(\(\rho=1\)) the optimal weight is \(w_1^*=1/3\) in accordance with
the situation of compound symmetry.  For fixed standardized variance
ratios of \(a=50\), \(100\) and \(200\), the optimal weight \(w_1^*\)
is shown in Figure~\ref{fig:xy-ar1-j3} in dependence on the
correlation \(\rho\).  From this figure it can be seen that the
optimal weight \(w_1^*\) is slightly descending in the correlation for
small to moderate values of \(\rho\) and then ascending back to
\(1/3\) for large values of the correlation \(\rho\).  However, the
gain of the optimal design is not substantial here as, for \(a=50\),
\(100\) and \(200\), the uniform design (\(w=1/3\)) has efficiency of
more than \(99.5\%\).

\begin{figure}
  \centering
  \includegraphics[width=0.45\textwidth]{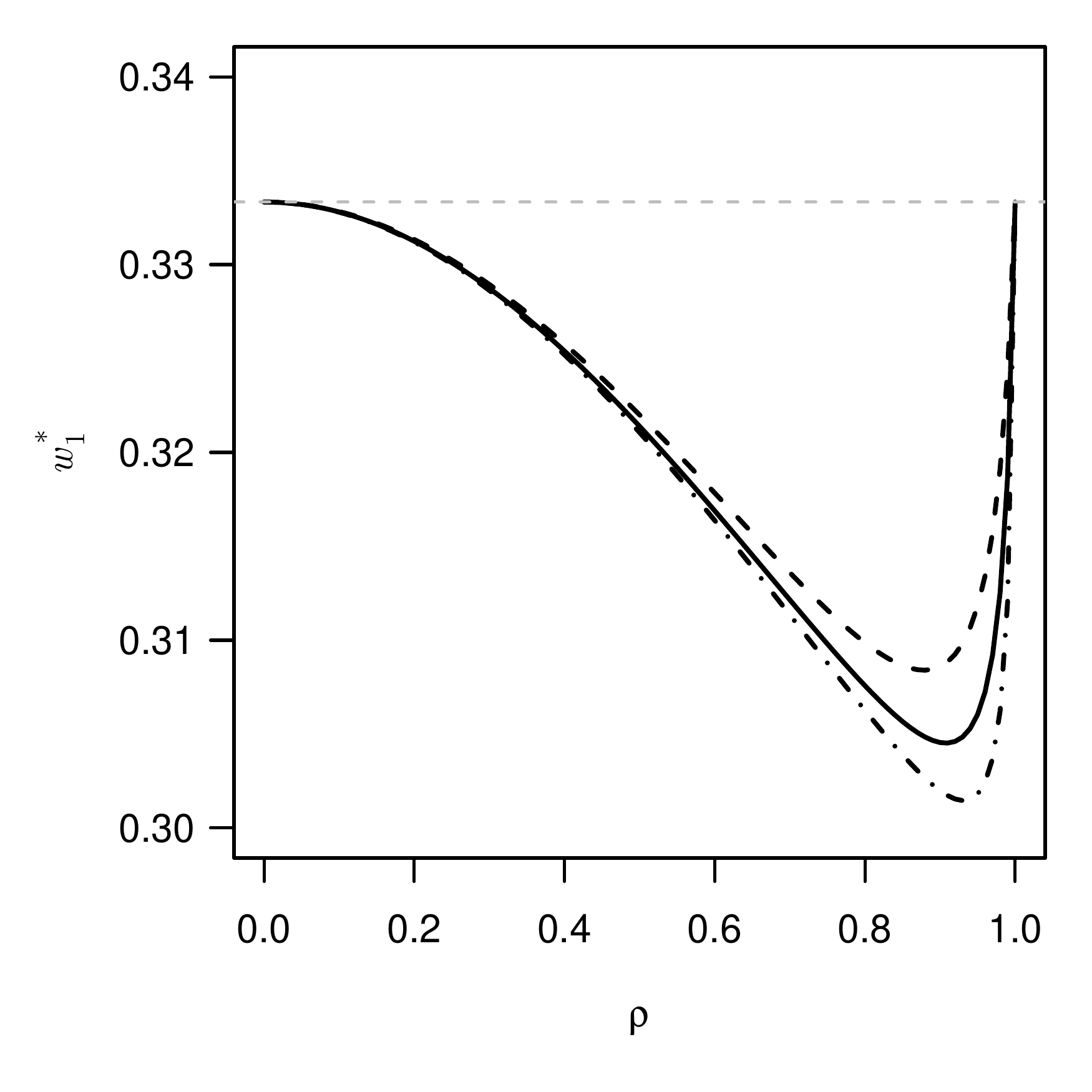}
  \caption{Optimal weight \(w_1^*=I_1^*/I\) with respect to \(\rho\)
    for the straight line model with autoregressive covariance
    structure \(J = p = 3\) and different \(a=I\tau^2\). (dashed
    \(a=50\), solid \(a=100\), dash-dotted \(a=200\)) The dashed
    horizontal line marks the weight \(1 / 3\) of the uniform
    design.}
  \label{fig:xy-ar1-j3}
\end{figure}
\begin{figure}
  \centering
  \includegraphics[width=0.45\textwidth]{./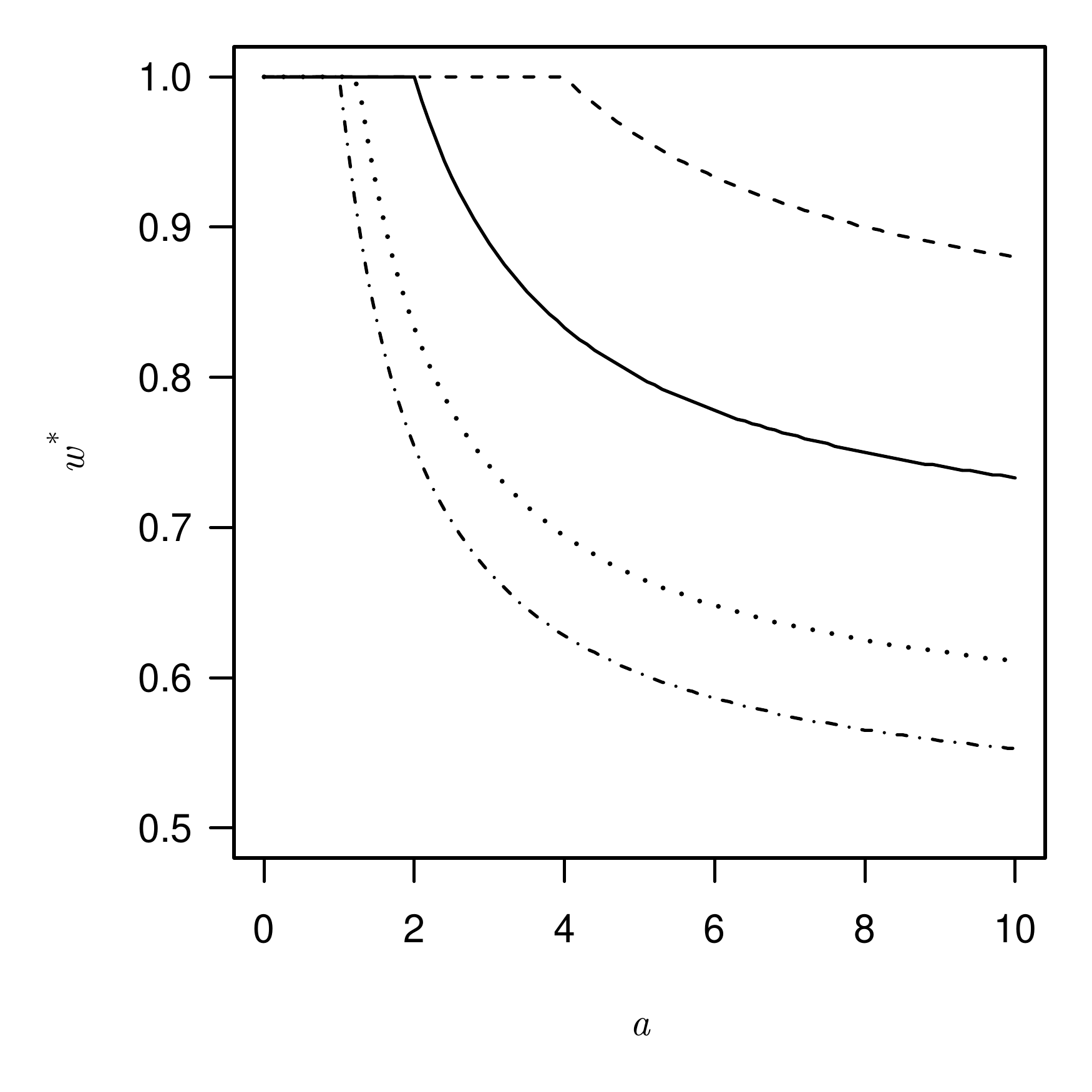}
  \caption{Optimal weight \(w^*=I_2^*/I\) with respect to
    \(a=I\tau^2\) for the ratio model (\(J = 2\), \(p = 1\)) for different
    values of \(\rho\) (dashed \(\rho=0.25\), solid \(\rho=0.5\),
    dotted \(\rho=0.8\), dash-dotted \(\rho = 0.99\)).}
  \label{fig:ratio-opt-weight}
\end{figure}
\begin{figure}
  \centering
    \includegraphics[width=0.45\textwidth]{./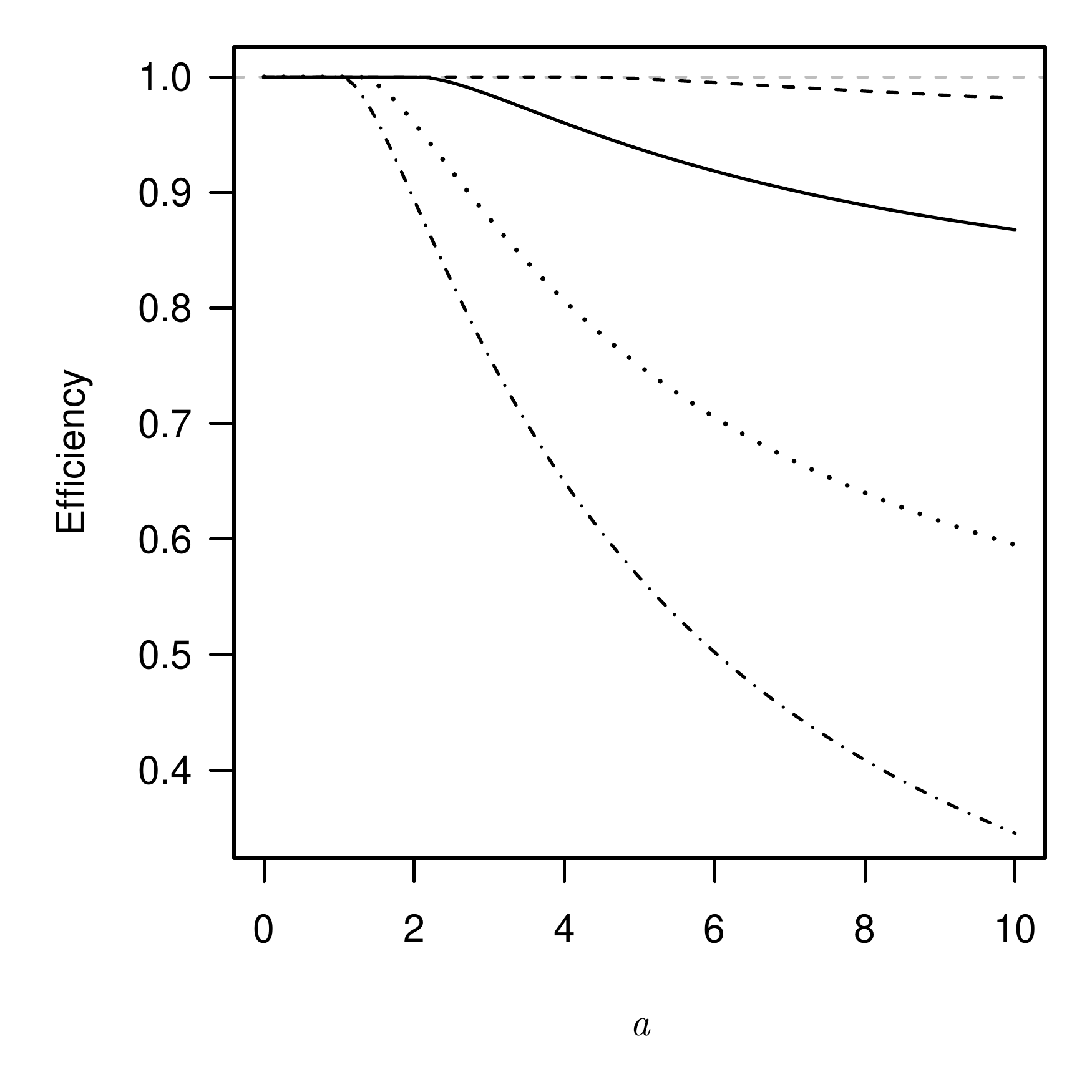}
   \includegraphics[width=0.45\textwidth]{./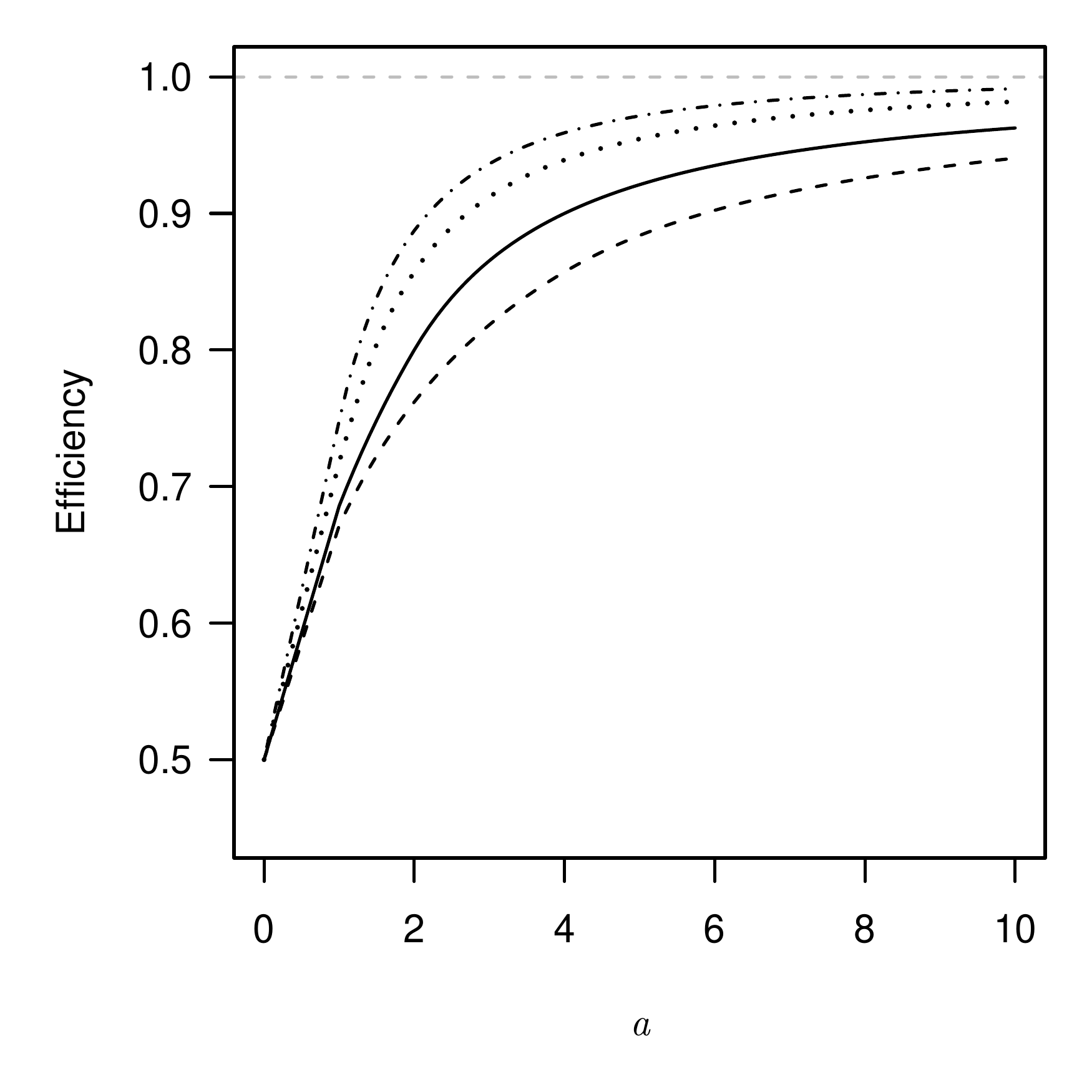}
    \caption{Efficiency of the minimally supported design 
      (\(w=1\); left) and of the uniform design
      (\(w=1/2\); right) with respect 
      to \(a=I\tau^2\) for
      the ratio model (\(J = 2\), \(p = 1\)) for different values of
      \(\rho\) (dashed \(\rho=0.25\), solid
      \(\rho=0.5\), dotted \(\rho=0.8\), dash-dotted \(\rho = 0.99\)).}
  \label{fig:ratio-eff}
\end{figure}

\subsection{The Case \(J > p\)}
\label{sec:sub:Jlargerp}
When the number \(J\) of time points is larger than the number \(p\)
of parameters, estimability of the parameter vector \(\bm{\beta}\)
requires that \(\mathbf{A}_{\bm{\beta}}\) has full column rank \(p\)
and that observations are made at, at least, \(p\) different time
points, i.e.\ \(I_j > 0\) for, at least, \(p\) distinct time points
\(t_j\).  In this case, optimal designs may be restricted to a
proper subset of time points.  However, as we will see, fully
supported designs, which take \(I_j > 0\) observations at each of the
\(J\) time points \(j=1, \ldots, J\), will play an important role.

For illustrative purposes we start here with two linear models for the
mean response: The straight line growth model and, even
simpler, the ratio model.

In the ratio model the mean response is fitted as a straight line
\(\mu_{j}(\bm{\beta}) = \beta_{1} t_{j}\) through the offset
\(t_1=0\). Here \(p=1\), and we consider only \(J = 2\) time points
\(t_1=0\), \(t_2=1\) such that the correlation structure reduces to
\(\mathbf{\Sigma}_{\gamma} = \begin{pmatrix} 1 & \rho
  \\
  \rho & 1
\end{pmatrix}\) in both the compound symmetry and the autoregressive
case.  For uncorrelated time points (\(\rho = 0\)), observations at
\(t_1 = 0\) do not bear any information on the parameter
\(\bm{\beta} = \beta_1\).  Then the optimal design \(\xi^*\) is
minimally supported and assigns all items to the second time point
\(t_2 = 1\) as in the corresponding fixed effects model without random
effects.  However, for correlated time points (\(\rho > 0\)), the
situation changes.  Observed response at the offset \(t_1 = 0\) can be
used to better estimate the ratio \(\beta_{1}\) and a fully supported
design becomes optimal if the standardized variance ratio
\(a = I \tau^2 = I \sigma_{\gamma}^2 / \sigma_{\varepsilon}^2\) is
sufficiently large: \(a > 1/\rho\).  Then the optimal design \(\xi^*\)
assigns only a proportion
\(I_2^* / I = w^* = (a + 1) / (a + a\rho) < 1\) of items to the
effective time point \(t_2 = 1\) and the remaining proportion
\(1 - w^* = (a\rho - 1) / (a + a\rho) > 0\) to the offset \(t_1 = 0\).
The optimal weights \(w^*\) are shown in
Figure~\ref{fig:ratio-opt-weight} for various values of the
correlation \(\rho\).  For large values of the standardized variance
ratio (\(a \to \infty\)), the optimal weight tends to
\(1 / (1 + \rho) > 0.5\) such that still the majority of items will be
assigned to \(t_2 = 1\).  The gain of the optimal design \(\xi^*\) is
illustrated in Figure~\ref{fig:ratio-eff} where the efficiency of the
optimal design \(\xi_1\) for the fixed effect model (\(w_2 = 1\)) and
the uniform design \(\bar{\xi}\) (\(w_2 = 1/2\)) is shown,
respectively.  The efficiency of \(\xi_1\) decreases to \(1 - \rho^2\)
when the standardized variance ratio \(a\) tends to infinity, while
the efficiency of \(\bar{\xi}\) is increasing from \(0.5\) for
\(a = 0\) to \(1\) for \(a \to \infty\).  Hence, the optimal design
\(\xi^*\) substantially outperforms the minimally supported design
\(\xi_1\) which is optimal in the fixed effects model if \(a\) becomes
large.

A similar phenomenon occurs in the straight line growth model
\(\mu_{j}(\bm{\beta}) = \beta_{0} + \beta_{1} t_{j}\) even in
the case of uncorrelated time points.  Using the symmetry structure of
time reversal as in Lemma~\ref{lem:ar-symmetry}, we see that for
equidistant time points the optimal design \(\xi^*\) is symmetric in
time, i.e.\ it has the form specified in Lemma~\ref{lem:ar-symmetry}
as long as the covariance matrix \(\mathbf{\Sigma}_\gamma\) shares the
symmetry property which holds for both the case of compound symmetry
as well as the case of autoregressive random effects.  In particular,
for \(J = 3\) time points, the optimal design has the form
\(\xi^* = \begin{pmatrix} I_{1}^* & I_{2}^* & I_{1}^* \end{pmatrix}\),
where \(I_2^* = I - 2 I_1^*\).  Hence, optimization has only to be
taken with respect to \(I_1^*\).  For simplification we consider the
situation of uncorrelated random effects which appears as the special
case \(\rho=0\) in either the compound symmetric or the autoregressive
case.  In terms of weights \(w_j = I_j / I\), the optimal weight
\(w_1^*\) also depends only on the standardized variance ratio
\(a = I\tau^2\).  By straightforward calculation the optimal weight
\(w_1^*\) can be determined as the root of the cubic polynomial
\(18 a^2 w^3 - (20 a^2 + 18 a) w^2 + 5 (a^2 + a) w + a + 1\) in
\(0 < w \leq 1/2\) if \(a \geq 2 (\sqrt{2} - 1) \approx 0.8284\),
while \(w_1^*=1/2\) for \(a \leq 2 (\sqrt{2} - 1)\).  The optimal
weight \(w_1^*\) is shown in Figure~\ref{fig:xy-linreg} in dependence
on the standardized variance ratio \(a\).  From this figure it can be
seen that the optimal weight \(w_1^*\) is decreasing in \(a\) for
\(a>2 (\sqrt{2} - 1)\), and it tends to
\((10 - \sqrt{10})/18 \approx 0.3799\) for \(a \to \infty\).  Hence,
it can be observed that the optimal design \(\xi^*\) is minimally
supported on the endpoints of the time interval for small values of
the standardized variance ratio \(a = I \tau^2\), while for larger
values the optimal design is fully supported on all time points.

The efficiency of the minimally supported design (\(w_1 = 1/2\)) and
of the uniform design (\(w_1 = 1/3\)) in Figure~\ref{fig:j3-null-eff}
show a similar behavior as exhibited in Figure~\ref{fig:ratio-eff} for
their counterparts in the ratio model, respectively.
\begin{figure}
  \centering
  \includegraphics[width=0.45\textwidth]{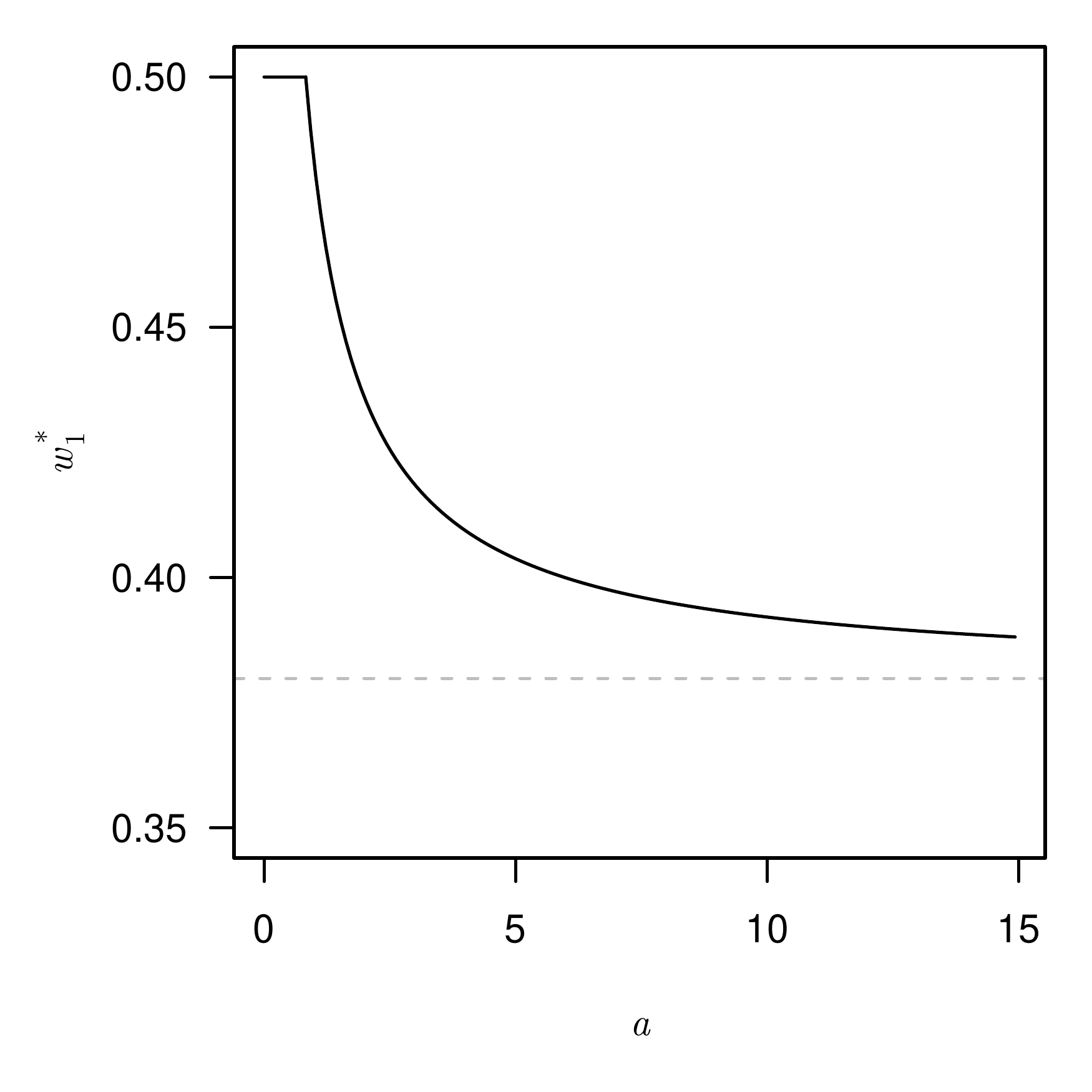}
  \caption{Optimal weight \(w_1^*=I_1^*/I\) with respect to
    \(a = I\tau^2\) for the straight line model with uncorrelated
    random effects, \(\rho = 0\), and \(J = 3\).  The dashed
    horizontal line indicates the limit
    \((10 - \sqrt{10})/18 \approx 0.3799\) for \(a\to\infty\).}
  \label{fig:xy-linreg}
\end{figure}
\begin{figure}
  \centering
  \includegraphics[width=0.45\textwidth]{./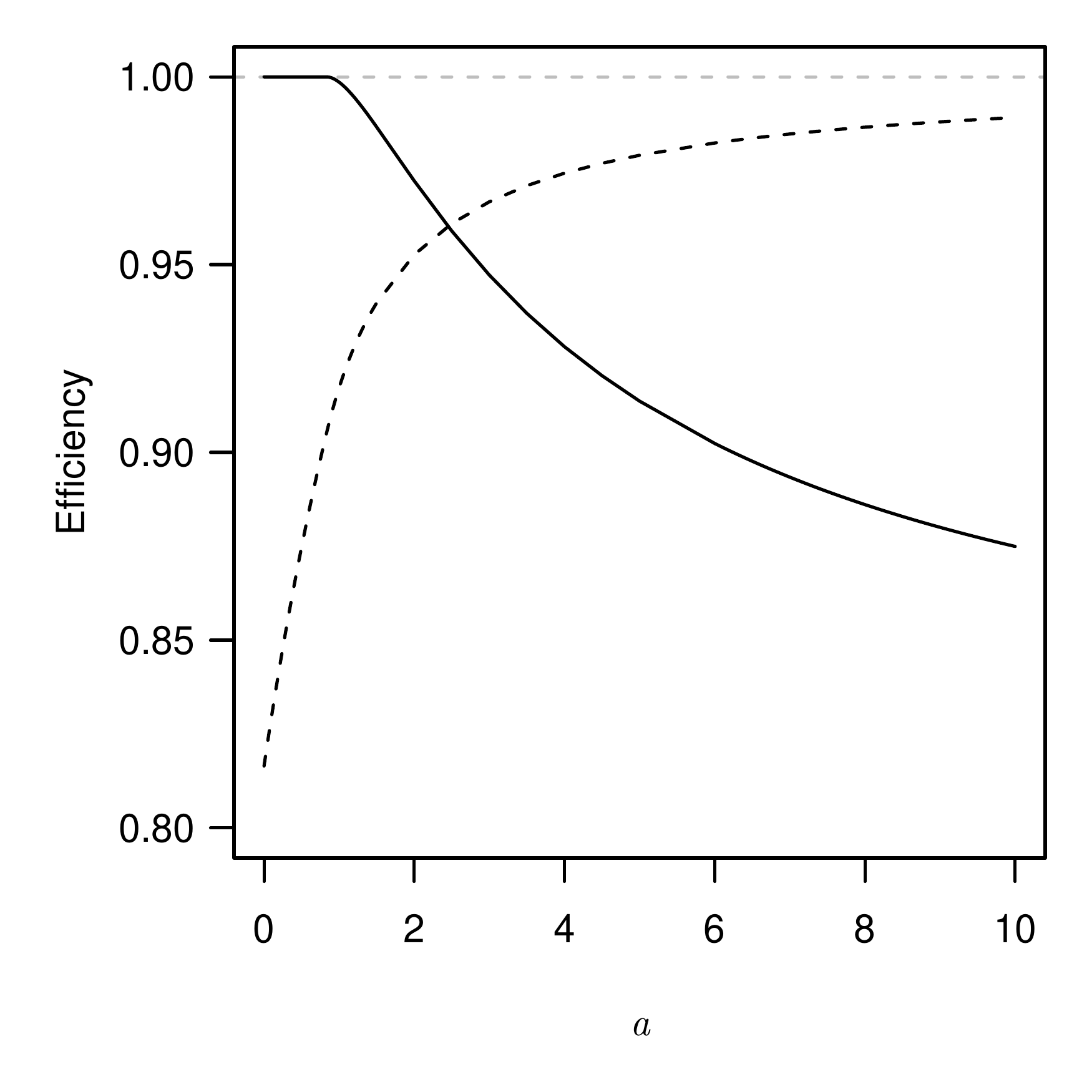}
  \caption{Efficiency of the minimally supported design (\(w_1=1/2\);
    solid) and the uniform design (\(w_1=1/3\); dashed) with respect
    to \(a = I\tau^2\) in the straight line model with uncorrelated
    random effects, \(\rho=0\), and \(J = 3\).}
  \label{fig:j3-null-eff}
\end{figure}

\subsection{Equivalence Theorem}
\label{sec:sub:EquivalenceTheorem}
For fully supported optimal designs, we can derive an equivalence
theorem which characterizes optimality in terms of an analytical
condition.  This result is motivated by the statement in
formula~(5.2.7) of \cite{FedorovHackl:1997}, p.~78, which covers the
situation \(J = p\) as a special case.

\begin{theorem}
  \label{theorem:equivalence}
  Let \(\bm{\beta}\) be estimable under the design
  \(\xi^* = \begin{pmatrix} I_{1}^* & \ldots &
    I_{J}^* \end{pmatrix}\).  Denote by \(\psi_j(\xi)\) the \(j\)th
  diagonal entry of the \(J \times J\) matrix
  \begin{equation}
    \label{eq:def-psi}
    \left(\sigma_{\varepsilon}^2 \mathbf{I}_{J} + \mathbf{\Sigma}_{\gamma} \mathbf{M}_{0}(\xi)\right)^{-1} \mathbf{A}_{\bm{\beta}}\, \mathbf{M}_{\bm{\beta}}(\xi)^{-1} \mathbf{A}_{\bm{\beta}}^{\mathrm{T}} \left(\sigma_{\varepsilon}^2 \mathbf{I}_{J} + \mathbf{M}_{0}(\xi) \mathbf{\Sigma}_{\gamma}\right)^{-1} \, ,
  \end{equation}
  then \(\xi^*\) is locally \(D\)-optimal if and only if
  \begin{equation}
    \label{eq:equivalence}
    \psi_j(\xi^*) \leq {\frac{1}{I}} \sum_{\ell = 1}^{J} I_{\ell}^* \psi_{\ell}(\xi^*)
  \end{equation}
  for all \(j=1, \ldots, J\).
  \par
  Moreover, for the optimal design \(\xi^*\), equality holds in
  \eqref{eq:equivalence} for those \(j\) for which \(I_j^* > 0\).
\end{theorem}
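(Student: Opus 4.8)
The plan is to derive the equivalence theorem from the general machinery of convex optimization for $D$-optimality, adapted to the present discrete design space. Since the $D$-criterion $\log\det(\mathbf{M}_{\bm{\beta}}(\xi))$ is concave on the convex set of approximate designs for which $\bm{\beta}$ is estimable (Lemma~\ref{lem:concave}), a design $\xi^*$ is optimal if and only if the directional derivative of the criterion at $\xi^*$ in the direction of every competing design is nonpositive. The design space here is the simplex of vectors $(I_1, \ldots, I_J)$ with $\sum_j I_j = I$ and $I_j \geq 0$; its extreme points are the designs $I \mathbf{e}_j$ putting all mass at a single time point $t_j$. So optimality is equivalent to requiring that the directional derivative from $\xi^*$ toward each vertex $I\mathbf{e}_j$ is nonpositive, for $j = 1, \ldots, J$.

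First I would compute that directional derivative. Writing $\mathbf{M}_0(\xi) = \diag_j(I_j)$ and using the representation \eqref{eq:InformationMatrix}--\eqref{eq:InformationMatrixGeneral}, the per-unit information is $\mathbf{M}_{\bm{\beta}}(\xi) = \mathbf{A}_{\bm{\beta}}^{\mathrm{T}}\big(\sigma_\varepsilon^2 \mathbf{M}_0(\xi)^{-1} + \mathbf{\Sigma}_\gamma\big)^{-1}\mathbf{A}_{\bm{\beta}}$ on the support of $\xi$. The key is that $\mathbf{M}_{\bm{\beta}}(\xi)$ is linear in the matrix $\mathbf{N}(\xi) := \mathbf{F}^{\mathrm{T}}\mathbf{V}^{-1}\mathbf{F}$, which by Lemma~\ref{lem:infogeneralsupport} equals $\mathbf{M}_0^{1/2}(\sigma_\varepsilon^2\mathbf{I}_J + \mathbf{M}_0^{1/2}\mathbf{\Sigma}_\gamma\mathbf{M}_0^{1/2})^{-1}\mathbf{M}_0^{1/2}$, and is \emph{additive} over time points in the sense that a single observation at $t_j$ contributes $\mathbf{N}(\mathbf{e}_j)$ (essentially $\frac{1}{\sigma_\varepsilon^2 + \sigma_{jj}}$ times $\mathbf{e}_j\mathbf{e}_j^{\mathrm{T}}$, up to the coupling through $\mathbf{\Sigma}_\gamma$). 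Then $\frac{d}{d\alpha}\log\det\mathbf{M}_{\bm{\beta}}((1-\alpha)\xi^* + \alpha\,I\mathbf{e}_j)\big|_{\alpha=0} = \trace\!\big(\mathbf{M}_{\bm{\beta}}(\xi^*)^{-1}\,\mathbf{A}_{\bm{\beta}}^{\mathrm{T}}\,\partial\big[(\sigma_\varepsilon^2\mathbf{M}_0^{-1}+\mathbf{\Sigma}_\gamma)^{-1}\big]\mathbf{A}_{\bm{\beta}}\big)$, and unwinding the derivative of the matrix inverse together with the identity $\mathbf{V}\mathbf{F} = \mathbf{F}\mathbf{U}$ (with $\mathbf{U} = \mathbf{\Sigma}_\gamma\mathbf{M}_0(\xi^*) + \sigma_\varepsilon^2\mathbf{I}_J$) produces exactly the sandwiched matrix in \eqref{eq:def-psi}, whose $j$th diagonal entry is $\psi_j(\xi^*)$. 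The $\frac{1}{I}\sum_\ell I_\ell^* \psi_\ell(\xi^*)$ on the right of \eqref{eq:equivalence} arises as $\trace(\mathbf{M}_{\bm{\beta}}(\xi^*)^{-1}\mathbf{M}_{\bm{\beta}}(\xi^*)) = p$ normalized appropriately — that is, it equals the value of $\psi$ averaged against $\xi^*$ itself, which the directional derivative toward $\xi^*$ (a null direction) forces.

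Then I would assemble the argument: nonpositivity of all the vertex directional derivatives, rewritten, is precisely the inequality $\psi_j(\xi^*) \leq \frac{1}{I}\sum_\ell I_\ell^*\psi_\ell(\xi^*)$ for all $j$; convexity/concavity upgrades this local condition to global optimality (the derivative toward any design, being a convex combination of the vertex derivatives, is then also nonpositive). The complementary slackness statement — equality at the $j$ with $I_j^* > 0$ — follows because $\xi^*$ lies in the relative interior with respect to its own support, so the derivative along the ray \emph{from} $\xi^*$ toward a support point and \emph{toward} $\xi^*$ from that point both vanish; equivalently, summing \eqref{eq:equivalence} weighted by $I_j^*/I$ gives equality on the left-hand side, forcing equality in each term with positive weight.

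\textbf{The main obstacle} I anticipate is the bookkeeping in the matrix-derivative step: showing that the derivative of $\big(\sigma_\varepsilon^2\mathbf{M}_0(\xi)^{-1} + \mathbf{\Sigma}_\gamma\big)^{-1}$ (or, in the general non-fully-supported form, of $\mathbf{M}_0^{1/2}(\sigma_\varepsilon^2\mathbf{I}_J + \mathbf{M}_0^{1/2}\mathbf{\Sigma}_\gamma\mathbf{M}_0^{1/2})^{-1}\mathbf{M}_0^{1/2}$) along $\xi$ collapses into the two-sided factor $(\sigma_\varepsilon^2\mathbf{I}_J + \mathbf{\Sigma}_\gamma\mathbf{M}_0(\xi))^{-1}(\cdots)(\sigma_\varepsilon^2\mathbf{I}_J + \mathbf{M}_0(\xi)\mathbf{\Sigma}_\gamma)^{-1}$ appearing in \eqref{eq:def-psi}, including correctly tracking the $\sqrt{I_j}$ factors and the $\mathbf{F}$–$\mathbf{V}$ commutation so that the scalar $\psi_j$ emerges as a genuine diagonal entry rather than a more complicated quadratic form. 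Once that algebraic identity is pinned down, the rest is the standard equivalence-theorem template; the restriction to fully supported $\xi^*$ in the theorem statement is exactly what guarantees $\mathbf{M}_0(\xi^*)$ is invertible so that all these inverses make sense.
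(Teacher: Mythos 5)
Your argument for the fully supported case is essentially the paper's: compute the directional derivative of $\log\det(\mathbf{M}_{\bm{\beta}}(\xi))$ at $\xi^*$ toward the vertex designs $I\mathbf{e}_j$, identify it as $I\psi_j(\xi^*)-\sum_{\ell}I_{\ell}^*\psi_{\ell}(\xi^*)$, invoke concavity (Lemma~\ref{lem:concave}) to upgrade the vertex conditions to global optimality, and obtain the equality statement at support points by averaging \eqref{eq:equivalence} with weights $I_j^*/I$. That part is fine.

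The genuine gap is your final claim that ``the restriction to fully supported $\xi^*$ in the theorem statement'' makes $\mathbf{M}_0(\xi^*)$ invertible: the theorem has no such restriction. It only assumes $\bm{\beta}$ is estimable under $\xi^*$, which for $J>p$ permits $I_j^*=0$ at some time points -- indeed the last clause (``equality holds \ldots for those $j$ for which $I_j^*>0$'') only makes sense because some $I_j^*$ may vanish, and \eqref{eq:def-psi} is deliberately written with $\left(\sigma_{\varepsilon}^2\mathbf{I}_J+\mathbf{\Sigma}_{\gamma}\mathbf{M}_0(\xi)\right)^{-1}$ rather than $\mathbf{M}_0(\xi)^{-1}$ precisely so that $\psi_j(\xi)$ is defined off the fully supported set. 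At such a boundary design the criterion need not be differentiable, so your gradient computation (which uses $\mathbf{M}_0(\xi)^{-1}$) does not apply, and one cannot simply assert that the one-sided directional derivatives are given by the continuous extension of $\bm{\psi}$. The paper closes this gap with a subgradient argument (following Gaffke): $-\log\det(\mathbf{M}_{\bm{\beta}}(\xi))$ is a closed proper convex function on $\Xi_{\bm{\beta}}$, by Theorem~25.6 of \cite{Rockafellar:1970} the subdifferential at a not fully supported $\xi$ is $\{\bm{\psi}(\xi)+\mathbf{v}:\,v_j=0 \text{ for } I_j>0,\ v_j\geq 0 \text{ for } I_j=0\}$, and Theorem~27.4 there characterizes optimality through these subgradients; since $\sum_j I_j v_j\geq 0$ and $\sum_j I_j^* v_j=0$, the correction term drops out and \eqref{eq:equivalence} follows. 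Without this (or an equivalent treatment of the non-differentiable boundary case), your proof only establishes Corollary~\ref{corollary:equivalence-fullsupport}, not the theorem as stated.
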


The proof is given in the Appendix.  For the general case of
potentially not fully supported designs, the essential arguments for
proving the equivalence are based on a suggestion by Norbert
\cite{Gaffke:2022}.

The equivalence theorem can be used to check whether numerically
obtained designs are indeed optimal as done in the next section.  For
fully supported designs, the inequalities can be replaced by equations
in~\eqref{eq:equivalence}.

\begin{corollary}
  \label{corollary:equivalence-fullsupport}
  Let \(\xi^*\) be fully supported, i.e.\ \(I_j^*>0\) for all \(j\),
  and let \(\mathbf{A}_{\bm{\beta}}\) have full column rank \(p\).
  Let \(\psi_j\) be defined as in Theorem~\ref{theorem:equivalence}.
  Then \(\xi^*\) is locally \(D\)-optimal if and only if all
  \(I \psi_j\) are equal,
  \(j=1, \ldots, J\).
\end{corollary}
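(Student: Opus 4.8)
The plan is to obtain the corollary as an immediate consequence of Theorem~\ref{theorem:equivalence}. First I would check that the hypotheses of that theorem are in force: since $\xi^*$ is fully supported, $\mathbf{M}_{0}(\xi^*) = \diag_{j=1,\ldots,J}(I_j^*)$ is nonsingular, so its column span is all of $\mathbb{R}^{J}$ and in particular contains the columns of $\mathbf{A}_{\bm{\beta}}$; combined with the assumed full column rank $p$ of $\mathbf{A}_{\bm{\beta}}$, this makes $\bm{\beta}$ estimable under $\xi^*$, and Theorem~\ref{theorem:equivalence} is applicable.

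For the ``only if'' part I would argue as follows. Assume $\xi^*$ is locally $D$-optimal. Because $I_j^* > 0$ for all $j$, the concluding assertion of Theorem~\ref{theorem:equivalence} yields equality in~\eqref{eq:equivalence} for every $j = 1, \ldots, J$, i.e.
\begin{equation*}
  \psi_j(\xi^*) = \frac{1}{I} \sum_{\ell=1}^{J} I_{\ell}^* \, \psi_{\ell}(\xi^*), \qquad j = 1, \ldots, J .
\end{equation*}
The right-hand side is independent of $j$, hence all $\psi_j(\xi^*)$ coincide, and therefore so do all the $I\,\psi_j(\xi^*)$.

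For the ``if'' part, suppose conversely that all $I\,\psi_j(\xi^*)$ are equal, equivalently $\psi_j(\xi^*) = \bar\psi$ for all $j$ and some constant $\bar\psi$. Using $\sum_{\ell=1}^{J} I_{\ell}^* = I$, I would compute, for each $j$,
\begin{equation*}
  \frac{1}{I} \sum_{\ell=1}^{J} I_{\ell}^* \, \psi_{\ell}(\xi^*) = \frac{\bar\psi}{I} \sum_{\ell=1}^{J} I_{\ell}^* = \bar\psi = \psi_j(\xi^*) ,
\end{equation*}
so~\eqref{eq:equivalence} holds, with equality, for all $j$; by Theorem~\ref{theorem:equivalence} this means $\xi^*$ is locally $D$-optimal. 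Since all the substantive work is already packaged in Theorem~\ref{theorem:equivalence}, I do not anticipate a genuine obstacle here; the only points requiring a moment's attention are the verification of estimability (immediate from full support, which also makes the representation~\eqref{eq:def-psi} well defined) and the elementary observation that a weighted average of equal numbers, with weights $I_\ell^*/I$ summing to one, returns that same number.
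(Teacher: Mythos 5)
Your proposal is correct and follows essentially the same route as the paper, which presents the corollary as an immediate consequence of Theorem~\ref{theorem:equivalence} (equality in~\eqref{eq:equivalence} at all support points for the ``only if'' direction, and the trivial verification that a weighted average of equal values reproduces that value for the ``if'' direction). Your explicit check that full support plus full column rank of \(\mathbf{A}_{\bm{\beta}}\) gives estimability is a sensible, if routine, addition.
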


In the case \(J = p\) of a minimal number of time points, we obtain a
characterization of the optimal design which is in accordance with
formula~(5.2.7) of \cite{FedorovHackl:1997}, p.~78, when there the
regression functions are substituted by the \(j\)th unit vector and
the factor \(I\) is introduced to account for the size of the testing.

\begin{corollary}
  \label{corollary:equivalence}
  Let \(J = p\).  Denote by \(\psi_j\) the \(j\)th diagonal entry of
  \begin{equation*}
    \mathbf{M}_{0}(\xi^*)^{-1}
    \left(\sigma_{\varepsilon}^2 \mathbf{M}_{0}(\xi^*)^{-1} + \mathbf{\Sigma}_{\gamma} \right)^{-1}  	\mathbf{M}_{0}(\xi^*)^{-1}
  \end{equation*}
  then \(\xi^*\) is \(D\)-optimal if and only if
  \begin{equation*}	
    I \psi_j = 
    \mathrm{trace}\left(
      \left(\sigma_{\varepsilon}^2 \mathbf{M}_{0}(\xi^*)^{-1} + \mathbf{\Sigma}_{\gamma} \right)^{-1}
      \mathbf{M}_{0}(\xi^*)^{-1} 	
    \right)
  \end{equation*}	
  for all \(j=1, \ldots, J\).
\end{corollary}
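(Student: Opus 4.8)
The plan is to derive Corollary~\ref{corollary:equivalence} from the general equivalence statement in Theorem~\ref{theorem:equivalence}, via its fully supported specialization Corollary~\ref{corollary:equivalence-fullsupport}, by making the diagonal quantities $\psi_j$ explicit in the case $J=p$. First I would observe that when $J=p$ the Jacobian $\mathbf{A}_{\bm{\beta}}$ is a square $J\times J$ matrix, so estimability of $\bm{\beta}$ --- which by the optimality definition requires $\mathbf{A}_{\bm{\beta}}$ to have full column rank $p=J$ and to lie in the span of $\mathbf{M}_{0}(\xi^*)=\diag_{j=1,\dots,J}(I_j^*)$ --- forces $\mathbf{A}_{\bm{\beta}}$ to be invertible and $\mathbf{M}_{0}(\xi^*)$ to have full rank, i.e.\ $I_j^*>0$ for every $j$. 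Hence any design under which $\bm{\beta}$ is estimable is automatically fully supported in this regime, so Corollary~\ref{corollary:equivalence-fullsupport} applies and $\xi^*$ is $D$-optimal if and only if all the products $I\psi_j$ coincide.

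Second, I would compute $\psi_j$. From \eqref{eq:InformationMatrix}, in the per-unit normalization $\mathbf{M}_{\bm{\beta}}(\xi^*)=\mathbf{A}_{\bm{\beta}}^{\mathrm{T}}\bigl(\sigma_{\varepsilon}^{2}\mathbf{M}_{0}(\xi^*)^{-1}+\mathbf{\Sigma}_{\gamma}\bigr)^{-1}\mathbf{A}_{\bm{\beta}}$, so --- using invertibility of the square $\mathbf{A}_{\bm{\beta}}$ --- $\mathbf{A}_{\bm{\beta}}\mathbf{M}_{\bm{\beta}}(\xi^*)^{-1}\mathbf{A}_{\bm{\beta}}^{\mathrm{T}}=\sigma_{\varepsilon}^{2}\mathbf{M}_{0}(\xi^*)^{-1}+\mathbf{\Sigma}_{\gamma}$. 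Writing $\mathbf{M}=\mathbf{M}_{0}(\xi^*)$ and $\mathbf{B}=\sigma_{\varepsilon}^{2}\mathbf{M}^{-1}+\mathbf{\Sigma}_{\gamma}$, the identities $\sigma_{\varepsilon}^{2}\mathbf{I}_{J}+\mathbf{\Sigma}_{\gamma}\mathbf{M}=\mathbf{B}\mathbf{M}$ and $\sigma_{\varepsilon}^{2}\mathbf{I}_{J}+\mathbf{M}\mathbf{\Sigma}_{\gamma}=\mathbf{M}\mathbf{B}$ turn the matrix in \eqref{eq:def-psi} into $(\mathbf{B}\mathbf{M})^{-1}\mathbf{B}(\mathbf{M}\mathbf{B})^{-1}=\mathbf{M}^{-1}\mathbf{B}^{-1}\mathbf{M}^{-1}$, which is exactly the matrix displayed in the corollary. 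Thus $\psi_j$ is the $j$th diagonal entry of $\mathbf{M}_{0}(\xi^*)^{-1}\bigl(\sigma_{\varepsilon}^{2}\mathbf{M}_{0}(\xi^*)^{-1}+\mathbf{\Sigma}_{\gamma}\bigr)^{-1}\mathbf{M}_{0}(\xi^*)^{-1}$.

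Third, I would identify the common value. Since $\mathbf{M}_{0}(\xi^*)=\diag_{j}(I_j^*)$, left-multiplying $\mathbf{M}^{-1}\mathbf{B}^{-1}\mathbf{M}^{-1}$ by $\mathbf{M}_{0}(\xi^*)$ and taking the trace gives the identity $\sum_{j=1}^{J}I_j^*\psi_j=\trace(\mathbf{B}^{-1}\mathbf{M}^{-1})=\trace\bigl((\sigma_{\varepsilon}^{2}\mathbf{M}_{0}(\xi^*)^{-1}+\mathbf{\Sigma}_{\gamma})^{-1}\mathbf{M}_{0}(\xi^*)^{-1}\bigr)$, valid for every estimable design. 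If all the $I\psi_j$ equal a common value $\kappa$, then $\psi_j=\kappa/I$ and, because $\sum_j I_j^*=I$, the left-hand sum equals $\kappa$; conversely, if $I\psi_j$ equals the trace above for every $j$, the $I\psi_j$ are trivially all equal. Combined with the first step this gives: $\xi^*$ is $D$-optimal if and only if $I\psi_j=\trace\bigl((\sigma_{\varepsilon}^{2}\mathbf{M}_{0}(\xi^*)^{-1}+\mathbf{\Sigma}_{\gamma})^{-1}\mathbf{M}_{0}(\xi^*)^{-1}\bigr)$ for all $j=1,\dots,J$, as claimed.

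The argument is essentially bookkeeping once Theorem~\ref{theorem:equivalence} and Corollary~\ref{corollary:equivalence-fullsupport} are in hand, so I do not foresee a genuine obstacle. The two points to watch are the reduction to the fully supported case --- that for $J=p$ estimability already forces $I_j^*>0$ for all $j$, so that the corollary is applicable and $\mathbf{M}_{0}(\xi^*)$ is invertible --- and keeping straight which factors actually cancel: the cancellation of $\mathbf{A}_{\bm{\beta}}$ uses invertibility of the square Jacobian rather than merely full column rank, and the rearrangement into $\mathbf{M}^{-1}\mathbf{B}^{-1}\mathbf{M}^{-1}$ relies on the invertibility of $\mathbf{M}_{0}(\xi^*)$ supplied by the first step.
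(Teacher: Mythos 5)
Your derivation is correct and follows essentially the same route the paper intends: the corollary is stated as a direct specialization of Theorem~\ref{theorem:equivalence} (via Corollary~\ref{corollary:equivalence-fullsupport}), and your steps --- estimability with $J=p$ forcing full support, cancellation of the square Jacobian so that $\mathbf{A}_{\bm{\beta}}\mathbf{M}_{\bm{\beta}}(\xi^*)^{-1}\mathbf{A}_{\bm{\beta}}^{\mathrm{T}}=\sigma_{\varepsilon}^{2}\mathbf{M}_{0}(\xi^*)^{-1}+\mathbf{\Sigma}_{\gamma}$, reduction of \eqref{eq:def-psi} to $\mathbf{M}_{0}(\xi^*)^{-1}\bigl(\sigma_{\varepsilon}^{2}\mathbf{M}_{0}(\xi^*)^{-1}+\mathbf{\Sigma}_{\gamma}\bigr)^{-1}\mathbf{M}_{0}(\xi^*)^{-1}$, and identification of $\sum_j I_j^*\psi_j$ with the trace --- are exactly the bookkeeping the paper leaves implicit. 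No gaps.
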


In particular, for \(J = p\), the \(D\)-optimal design \(\xi^*\) does
neither depend on the location parameters \(\bm{\beta}\) nor on the
particular model for the mean response curve.

\section{Computational Results}
\label{sec:computational}
In this section computational results are presented for both nonlinear
as well as for the straight line growth curve model considered before.
For the linear (straight line growth curve) model the \(D\)-optimal
design does not depend on \(\bm{\beta}\) while for the nonlinear
models we determine locally \(D\)-optimal designs for various values
of \(\bm{\beta}\).  The values used for the parameter are given in
Table~\ref{tab:paramSet}.  For the reason of expected increase in the
mean scores and with respect to standardization, only parameters with
\(\beta_1 > \beta_0 = 0\) were considered.  The correlation parameter
\(\rho\) and the variance ratio
\(\tau^2 = \sigma_\gamma^2 / \sigma_\varepsilon^2\) were varied to
exhibit their influence on the optimal design.
\begin{table}[tb]
  \centering
  \caption{Parameter settings used in the numerical computations}
  \begin{tabular}{l|ll|l}
    \(\beta_{0}\) & \(0\) & \(\rho\) &\(0, 0.05,\ldots, 0.95\)\\
    \(\beta_{1}\) & \(1, 3, 5, 10\)&\(\sigma_\gamma^2\) &\(0,0.1,\ldots,2,2.5,5,10\)\\
    \(\beta_{2}\) & \(0.5, 1, 2\) & \(\sigma_\varepsilon^2\)&\(1\)\\
    \(\beta_{3}\) & \(- 2, - 1, 0, 1, 2\) \\
  \end{tabular}
  \label{tab:paramSet}
\end{table}
All computations where done using R \citep{R:2020}.  The optimality of the designs obtained was checked
by applying Theorem~\ref{theorem:equivalence}.

Calculations were made for varying numbers \(J\) of time points and
number of items \(I\). Here we only present results for \(J=7\) time
points, which is motivated by the results of the meta analysis in
\cite{ScharfenJansenHolling:2018a}. Computations for other numbers
\(J > p\) of time points show similar results. For \(J = p\) the
theoretical results of Subsection~\ref{sec:sub:saturated} were
recovered. Also note, that the figures show results for $I=100$, which
seems to be a reasonable number of items for $J=7$ time points.  The
subsequent discussion of the results in the text is in terms of the
standardized variance ratio $a=I\tau^2$.  The values $\tau^2=0.1$,
$0.5$, $1$ and $2$ in the figures correspond to $a=10$, $50$, $100$
and $200$, respectively.

\begin{figure}
  \centering
  \includegraphics[width=0.45\textwidth]{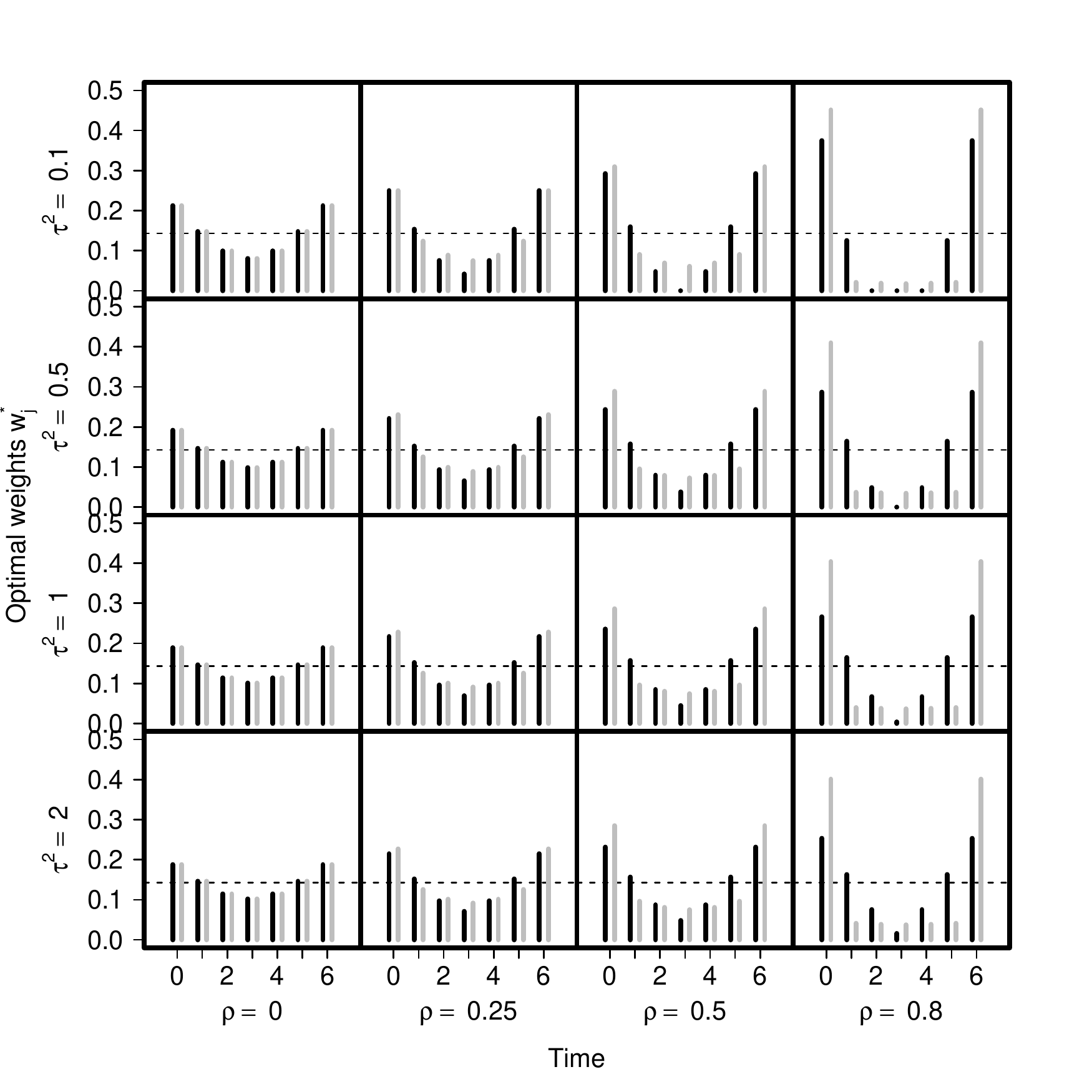}  
  \caption{Optimal weights \(w_j^* = I_j^*/I\) for the straight line
    model with \(J = 7\) and \(I = 100\) for different values of
    \(\tau^2\) and \(\rho\). Compound symmetry black; Autoregressive
    gray. The dashed horizontal line marks the weight \(1 / 7\)
    of the uniform design.}
  \label{fig:LineJ7}
\end{figure}

The general behavior of the optimal weights with respect to the
standardized variance ratio $a = I\tau^2$ and the correlation $\rho$
is already apparent in the straight line model (see
Figure~\ref{fig:LineJ7}).  If the standardized variance ratio $a$ is
small, the designs are closer to minimally supported designs which are
optimal in the case of no random effects and are uniform on \(p\) time
points for the present models.  In particular, some weights
\(w_j^* = I_{j}^* / I\) vanish for $a\rightarrow 0$.  When \(a\)
increases weights are spread out more equally across time points.  An
explanation for this behavior may be that variance of the random
effect is relatively large for large $a$.  Hence, the observations
vary more between time points than for small values of $a$. To get
precise estimates this is counteracted in the design by increasing the
number of observations spent at those time points. Note, that in the
straight line model the weights of the middle points decrease for
small $a$, and the minimally supported optimal design is concentrated
on the boundary points $t_1=0$ and $t_J=J-1$ only.

As \(\rho\) increases, observations at neighboring time points get
more similar and, again, optimal weights of some time points get
smaller. For the straight line model these are the interior time
points.  This qualitative behavior is the same for both covariance
structures under consideration.  For the autoregressive covariance
structure, the weights of time points closer to those of the minimally
supported design tend to be higher.  Overall the optimal weights under
compound symmetry tend to be closer to balanced designs. This is in
accordance with the observations for the unstructured model in
Subsection~\ref{sec:sub:Jlargerp}.

For the nonlinear models only asymmetric situations are displayed in
Figures~\ref{fig:4PLJ7} to \ref{fig:3PExpJ7_slopes} and more weight is
spent at the first time points. 

\begin{figure}
  \centering
  \includegraphics[width=0.45\textwidth]{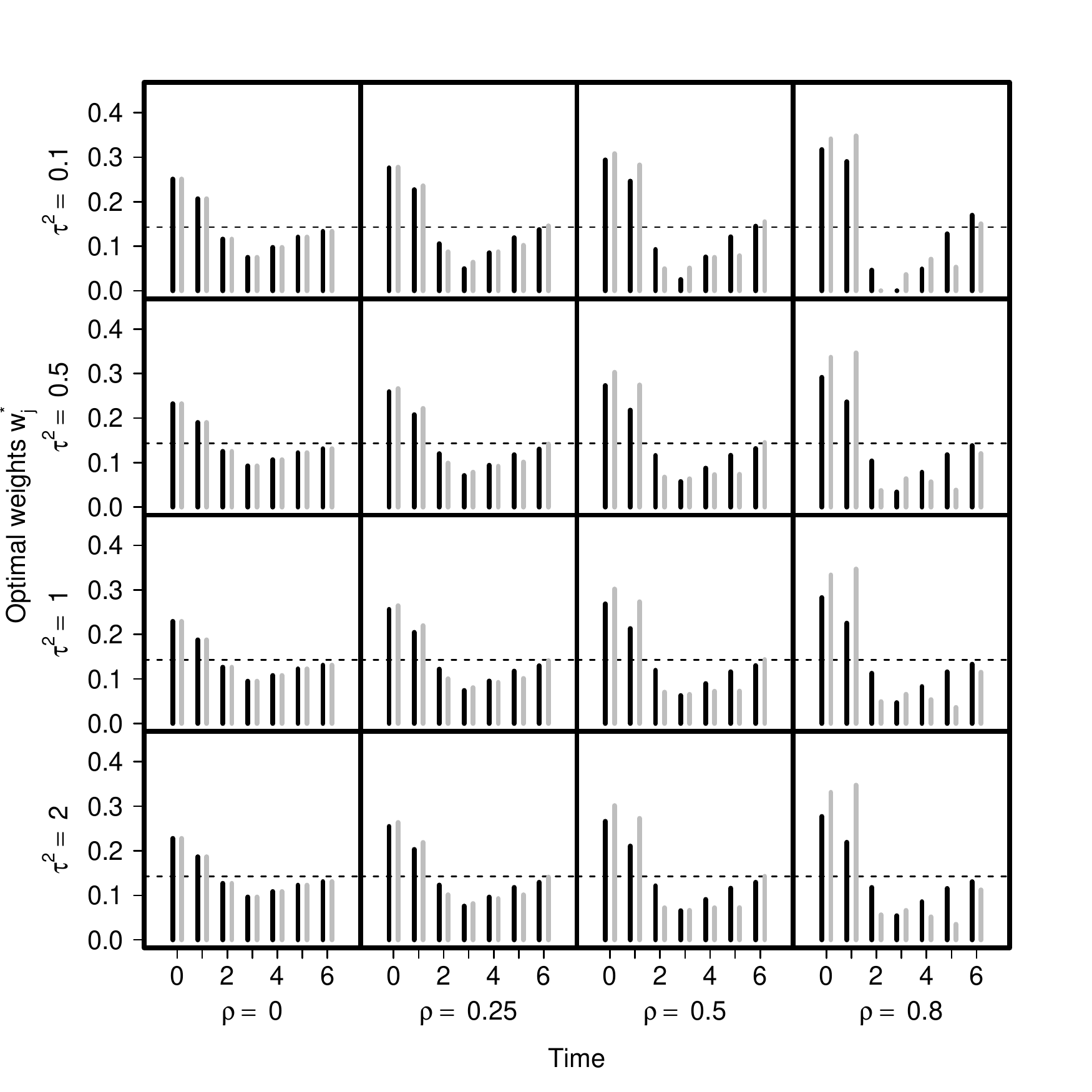}  
  \caption{Optimal weights \(w_j^* = I_j^*/I\) for the exponential
    model with \(\bm{\beta} = (0\;1\;1)^{\mathrm{T}} \), \(J=7\), and
    \(I = 100\) for different values of \(\tau^2\) and \(\rho\).
    Compound symmetry black; Autoregressive gray.  The dashed
    horizontal line marks the weight \(1 / 7\) of the uniform design.
  }
  \label{fig:3PExpJ7}
\end{figure}
\begin{figure}
  \centering
  \includegraphics[width=0.45\textwidth]{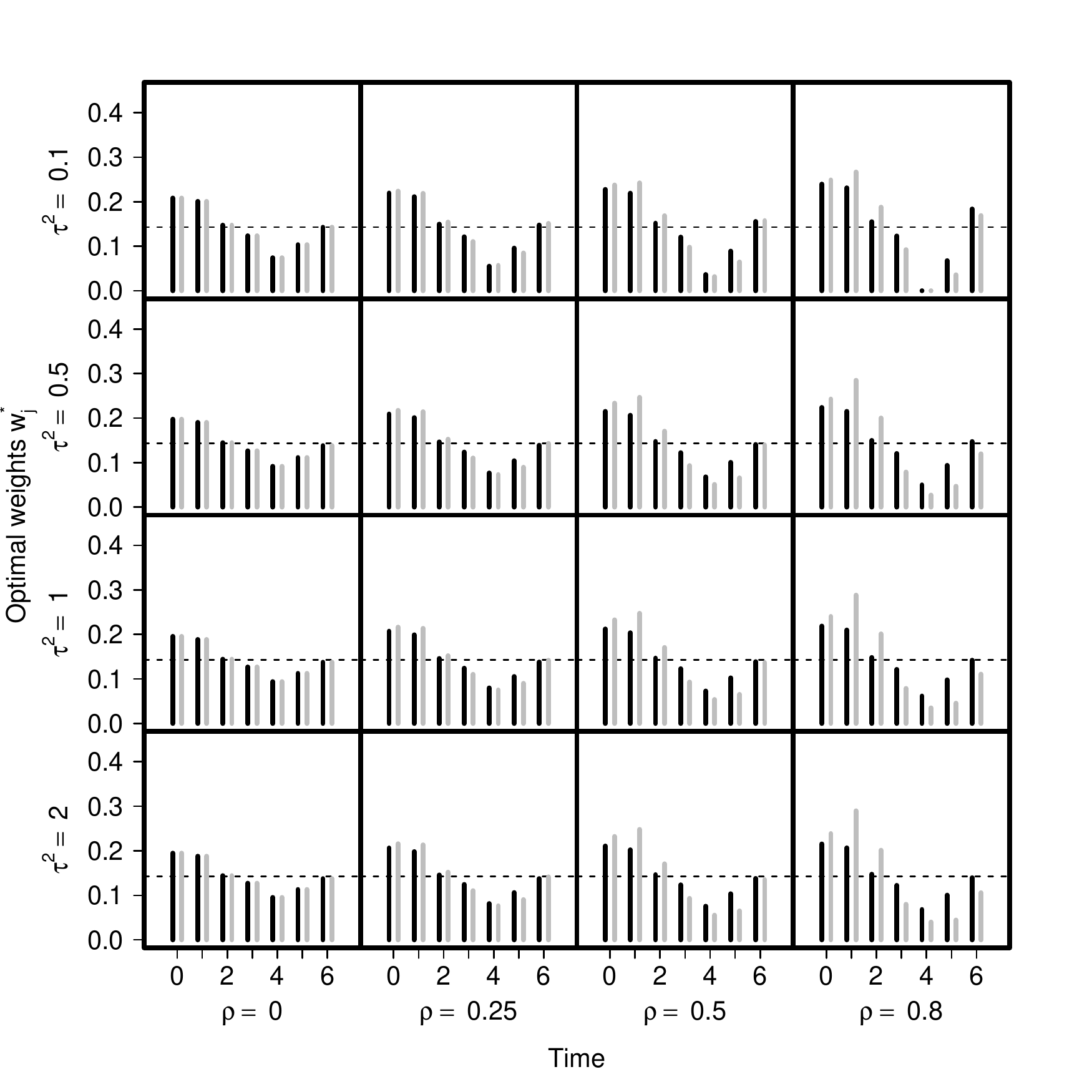}
  \caption{Optimal weights \(w_j^* = I_j^*/I\) for the logistic
    model with \(\bm{\beta} = (0\;1\;1\;0)^{\mathrm{T}} \),
    \(J=7\), and \(I = 100\) for different values of \(\tau^2\)
    and \(\rho\).  Compound symmetry black; Autoregressive gray.
    The dashed horizontal line marks the weight \(1 / 7\) of the
    uniform design.  }
  \label{fig:4PLJ7}
\end{figure}

For larger slope \(\beta_{2}\), the weight is concentrated at the
first few time points and at the last (Figures~\ref{fig:3PExpJ7}
and~\ref{fig:3PExpJ7_slopes} for the exponential model;
Figures~\ref{fig:4PLJ7} and~\ref{fig:4PLJ7_slopes} for the logistic
model).  If the slope \(\beta_{2}\) becomes smaller, more weight is
shifted to interior points. (Figure~\ref{fig:3PExpJ7_slopes} and
\ref{fig:4PLJ7_slopes}) This is sensible, because initially the mean
curve shows its steepest ascent, while at the last time point it is
close to the upper asymptote (see Figure~\ref{fig:mean} for the mean
curves. Solid lines correspond to the examples in Figures~
\ref{fig:3PExpJ7} and \ref{fig:4PLJ7}).  For other model parameters
than \(\beta_{2}\) the influence on the weights was found to be not
substantial in the numerical computations and is, hence, not shown
here.

\begin{figure}
  \centering
  \includegraphics[width=0.45\textwidth]{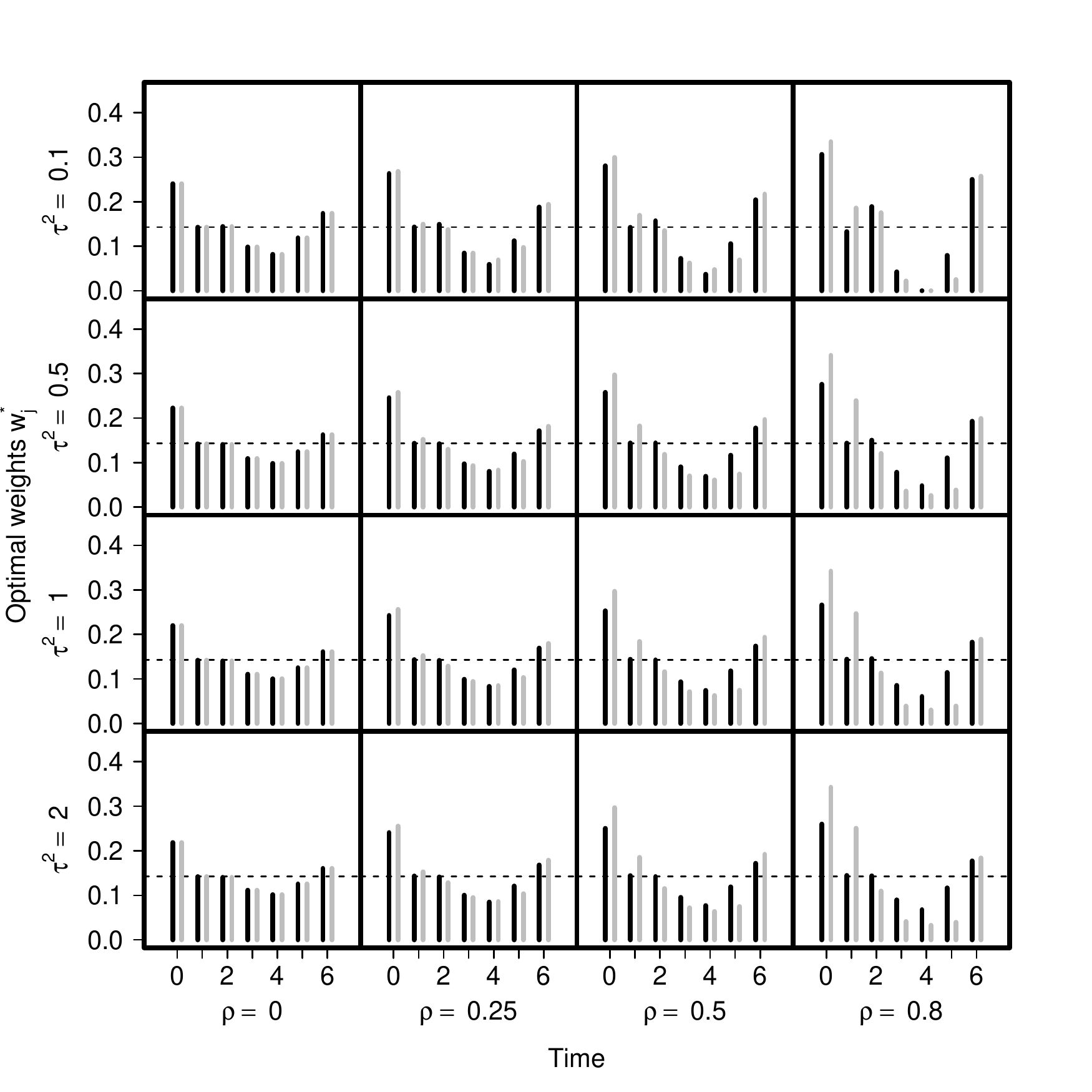}
  \includegraphics[width=0.45\textwidth]{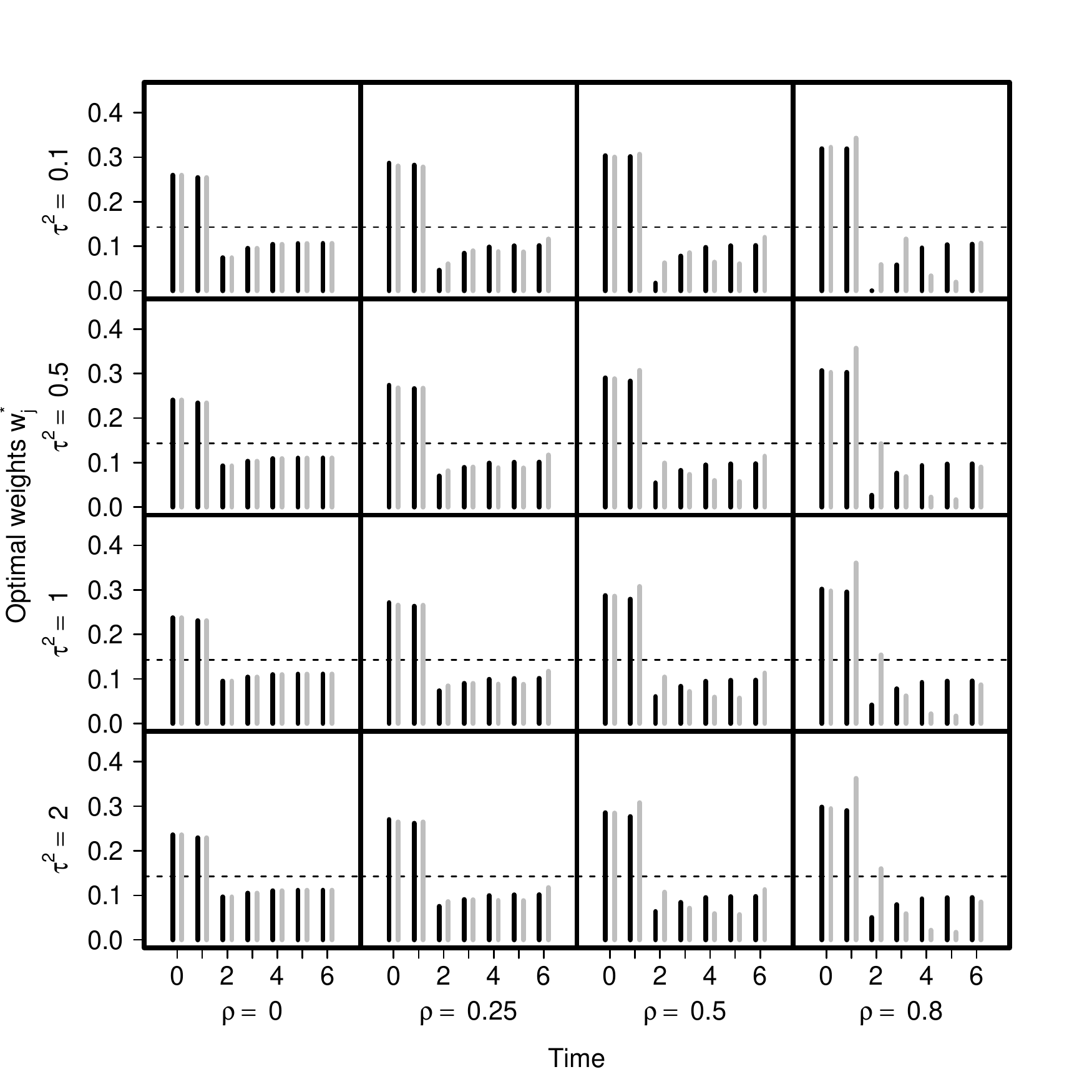}
  \caption{Optimal weights \(w_j^* = I_j^*/I\) for the exponential
    model with \(\bm{\beta} = (0\;1\;0.5)^{\mathrm{T}} \) (left) and
    \(\bm{\beta} = (0\;1\;2)^{\mathrm{T}} \) (right), \(J = 7\), and
    \(I = 100\) for different values of \(\tau\) and \(\rho\).
    Compound symmetry black; Autoregressive gray.  The dashed
    horizontal line marks the weight \(1 / 7\) of the uniform design.
  }
  \label{fig:3PExpJ7_slopes}
\end{figure}
\begin{figure}
  \centering
  \includegraphics[width=0.45\textwidth]{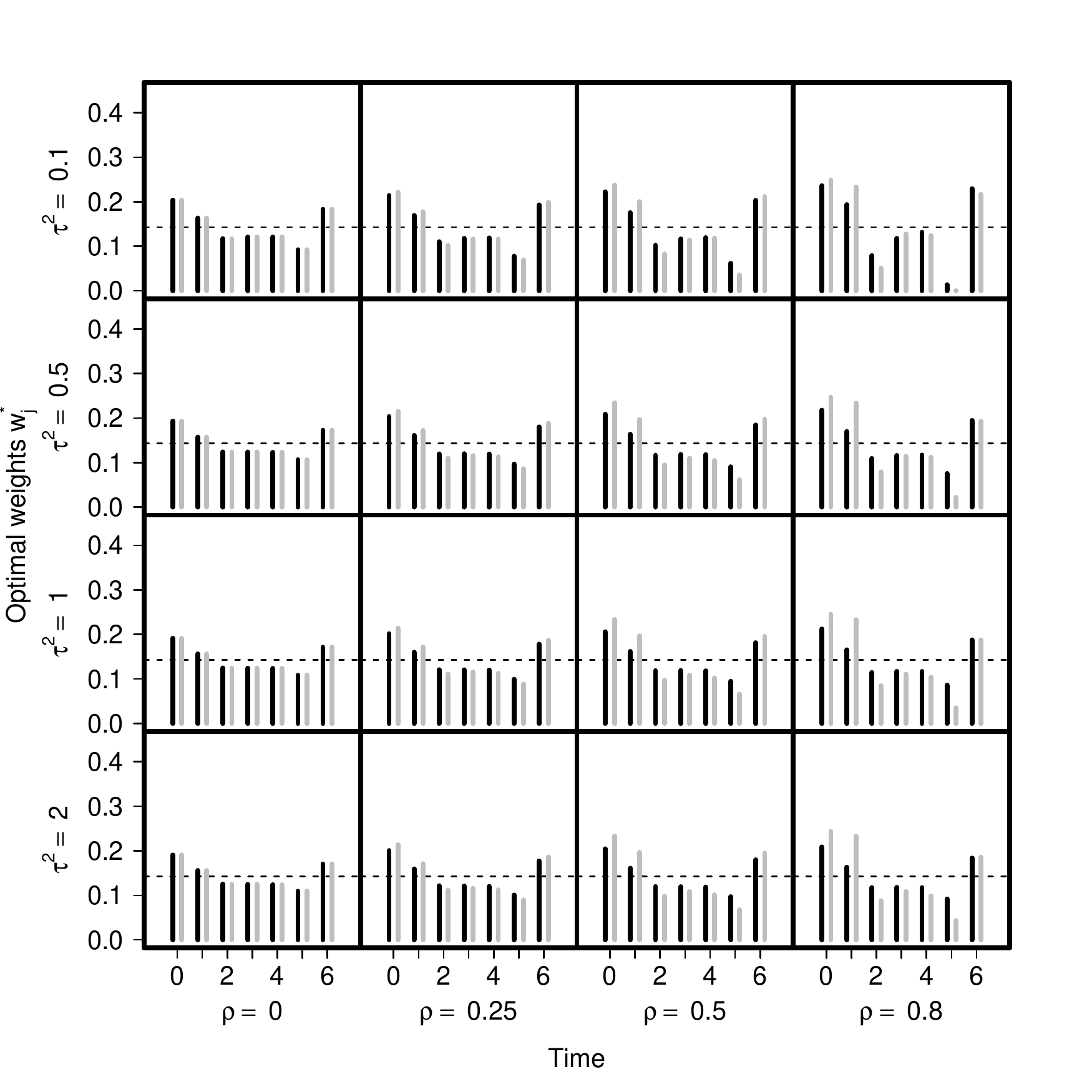}
  \includegraphics[width=0.45\textwidth]{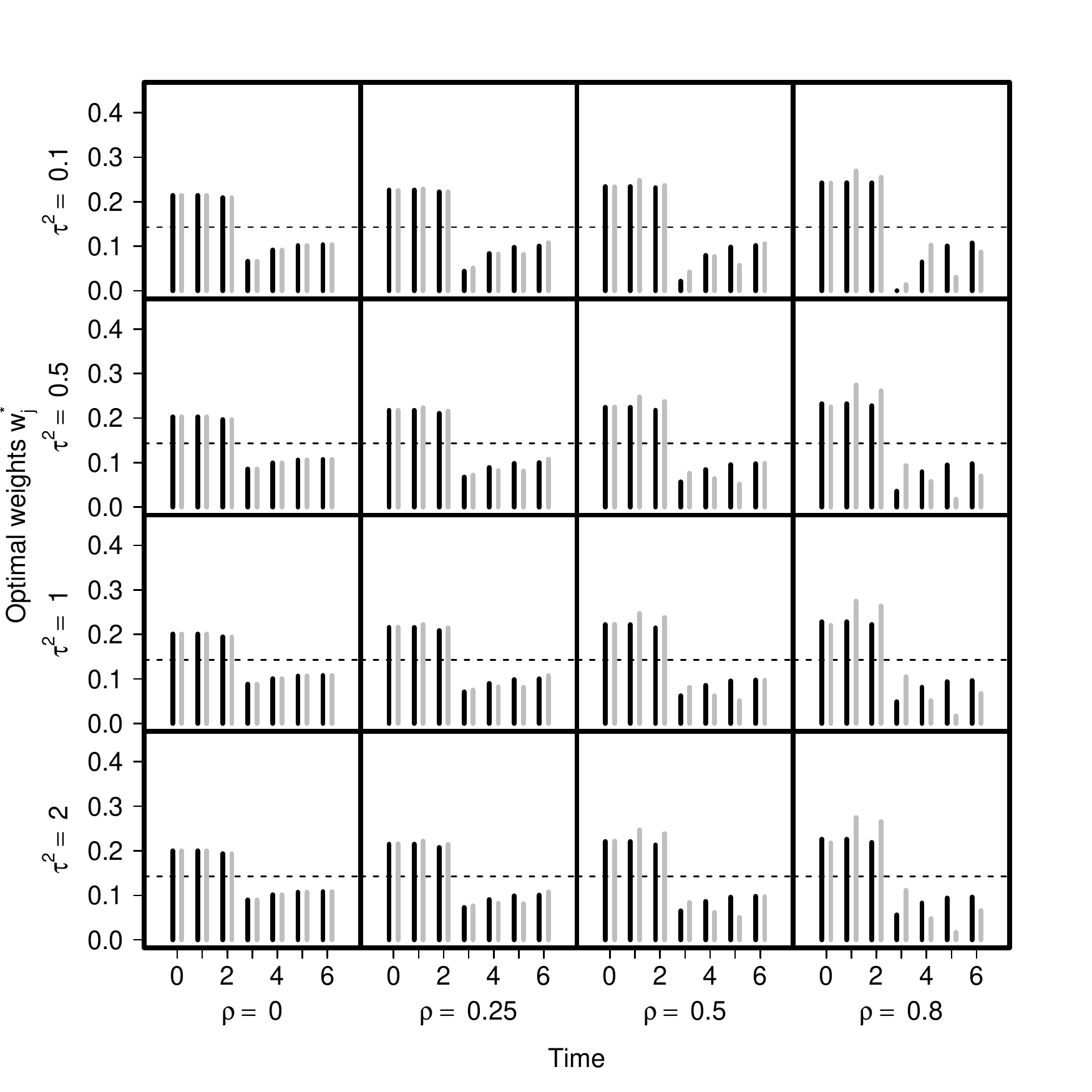}
  \caption{Optimal weights \(w_j^* = I_j^*/I\) for the logistic model
    with \(\bm{\beta}= (0\;1\;0.5\;0)^{\mathrm{T}} \) (left) and
    \(\bm{\beta}= (0\;1\;2\;0)^{\mathrm{T}} \) (right), \(J = 7\), and
    \(I = 100\) for different values of \(\tau^2\) and \(\rho\).
    Compound symmetry black; Autoregressive gray.  The dashed
    horizontal line marks the weight \(1 / 7\) of the uniform design.
  }
  \label{fig:4PLJ7_slopes}
\end{figure}

\begin{figure}
  \centering
  \includegraphics[width=0.45\textwidth]{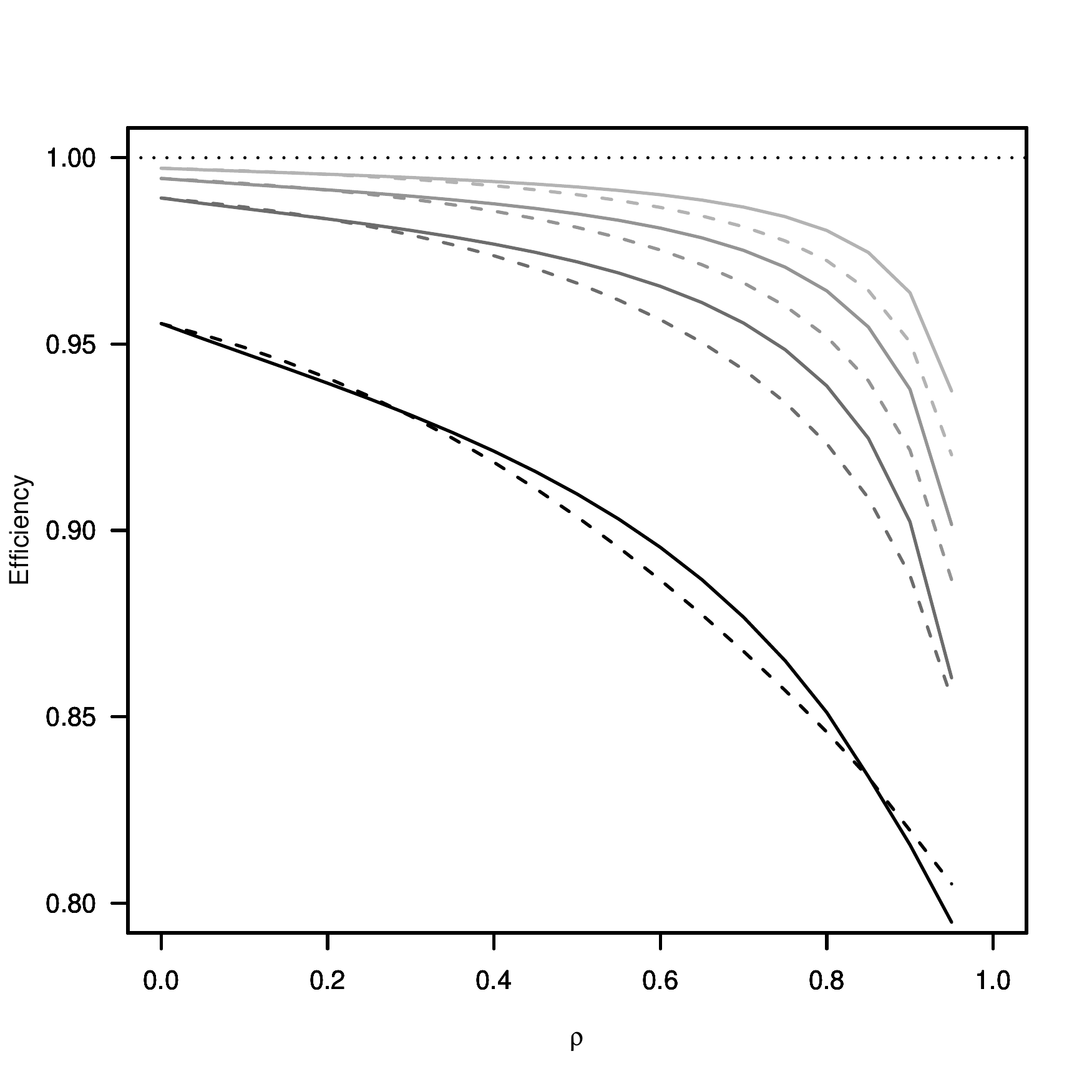}
  \includegraphics[width=0.45\textwidth]{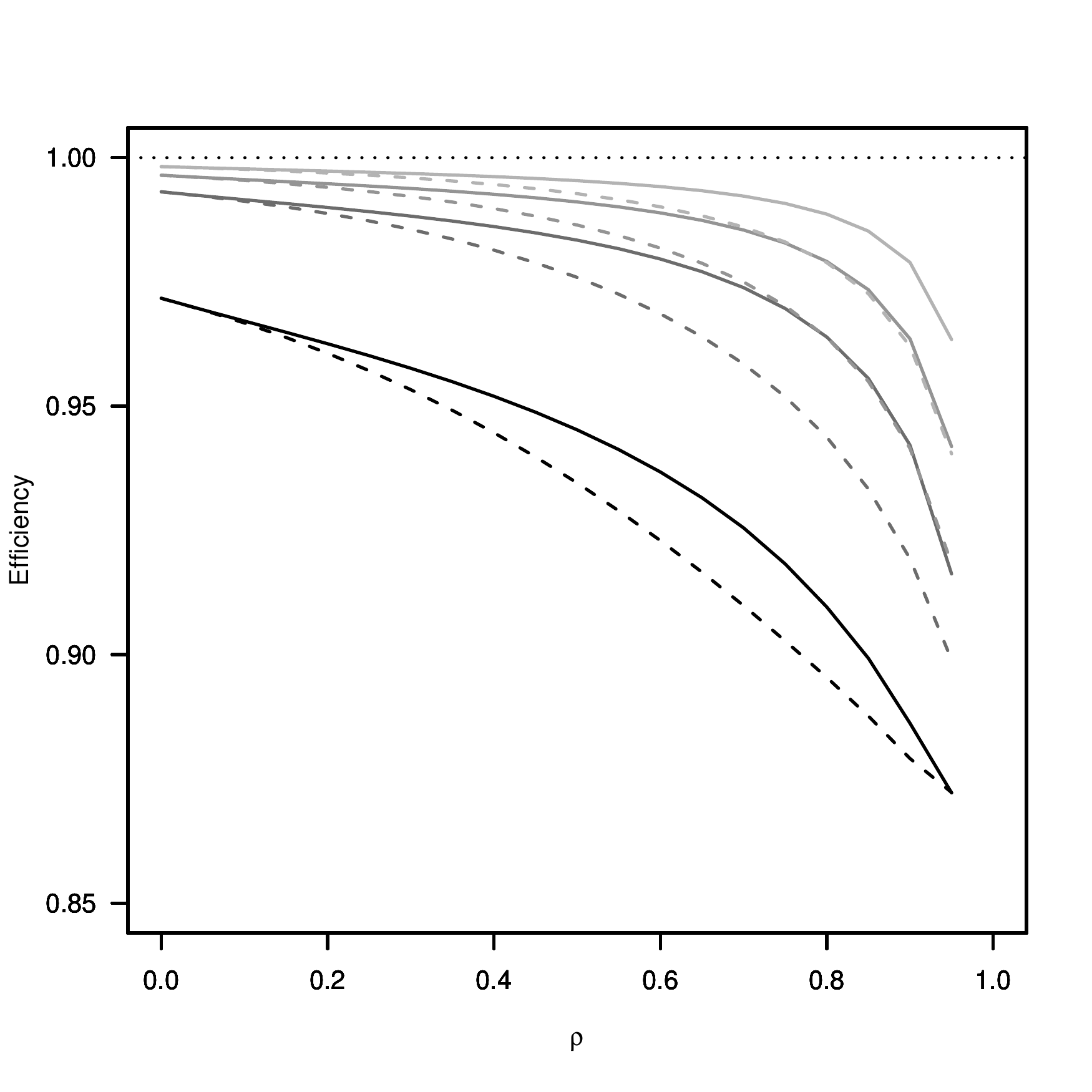}
  \caption{Efficiency of the uniform compared to the locally optimal
    design for \(J=7\) and \(I=100\) with respect to \(\rho\) for
    different values of \(\tau^2\). Gray shades change from
    \(\tau^2=0.1\) in black, over \(\tau^2=0.5\) and \(\tau^2=1\), to
    \(\tau^2=2\) in light gray. Solid lines correspond to compound
    symmetry, dashed lines to autoregressive  (left: exponential
    \(\bm{\beta}=(0\;1\;1)^{\mathrm{T}}\); right: logistic
    \(\bm{\beta}=(0\;1\;1\;0)^{\mathrm{T}}\)).}
  \label{fig:efficienciesJ7}
\end{figure}

Figure~\ref{fig:efficienciesJ7} displays the efficiency of the uniform
design in dependence on the correlation \(\rho\) for different values
of the parameters in both the exponential decay model (left panel) and
the sigmoid logistic model (right panel).  It can be seen from the
figures that the efficiency of the uniform design gets smaller for
larger values of the correlation \(\rho\) and gets larger for larger
values of the standardized variance ratio \(a = I \tau^2\).  The
latter behavior is in accordance with the observation that for larger
values of \(a = I \tau^2\) the observations are more spread over all
time points.  It is also in agreement with the analytical results
exhibited in Figure~\ref{fig:j3-null-eff} in the case of a straight
line growth curve for \(J=3\) and \(\rho=0\).  Moreover, the
efficiency of the uniform design is larger for compound symmetry than
for the autoregressive case.  Over all settings of the computations,
the efficiency of the uniform design turns out to be at least \(0.72\)
in the exponential decay and \(0.79\) in the sigmoid logistic model.

\section{Discussion}
\label{sec:discussion}
In this article, we have derived optimal designs for different 
latent growth models. For linear growth curves, the
optimal design only depends on the variances and correlations
of the random effects which may be appropriately assumed 
to be sufficiently known in advance for some studies.
In particular, the impact of the variance of the random effect 
on the optimal design is only through the standardized variance 
ratio \(a=I\sigma_{\gamma}^2/\sigma_{\varepsilon}^2\).

When the growth curve is nonlinear in the ability parameters,
the determination of optimal designs further requires prior
knowledge of the abilities. 
For most studies, this assumption may be considered as too 
restrictive. 
However, adjusting nominal values for
the ability parameters leads to locally optimal designs which
may serve as a benchmark and as a starting point
for the development of designs that require less prior information 
on the parameters to be estimated.
For this aim, three major approaches may come into question:
sequential, Bayesian and maximin efficient designs.
In the context of retesting, the use of sequential designs is 
apparently difficult because of repeated 
measurements.  However, incorporating prior
knowledge on the abilities within a Bayesian approach may be a 
promising strategy. There it seems to be quite reasonable
to impose a normal distribution
for the ability parameter.
Furthermore, maximin efficient designs are an alternative 
worth-wile considering
where the minimal efficiency is maximized
over a range of abilities. 
In the case of a Rasch model which also includes both an ability 
and a difficulty parameter,
\cite{GrasshoffHollingSchwabe:2012} have successfully
developed such maximin efficient as well as Bayesian
optimal designs.
Their approach may be adopted to the present situation.

In the case of a minimal number of time points (\(J = p\)), 
the present findings are in accordance with the results for
random coefficient regression models, 
see~\cite{EntholznerBendaSchmelterSchwabe:2005},
because the analysis of variance type regression functions 
of the random effects lie in the space spanned by the
regression functions of the fixed effects (mean response curve).
This is no longer the case if the number \(J\) of time points
exceeds the number \(p\) of fixed effects parameters.
Then arguments based on the shape of the fixed effects
regression functions do not longer apply and are 
superimposed by the properties of the analysis of variance type
random effects regression function which tend to spread 
observations more uniformly in optimal designs.
It would be challenging to derive general results
for such situations in which the random effects are not directly
associated with the fixed effects coefficients.
\vspace{5mm}

\noindent
\textbf{Acknowledgment:} This work was partly supported under grants
HO 1286/6 and SCHW 531/15 of the German Research Foundation (DFG).

\section*{Appendix: Proofs}
\label{sec:appendix}

\subsection*{Proof of Lemma~\ref{lem:infogeneralsupport}}
\label{subsec:appendix-a}
First we note that, in the case of fully supported designs
(\(I_j > 0\) for all \(j = 1, \ldots, J\)), \(\mathbf{M}_0\) is of
full rank, and the expression on the right hand side
of~\eqref{eq:infogeneralsupport} can be obtained by pre- and
post-multiplication of~\eqref{eq:covoftheta2} by
\(\mathbf{M}_0^{1/2}\mathbf{M}_0^{-1/2}\).

In the general case, we make use of the formula
\(\left(\mathbf{I} + \mathbf{C} \mathbf{C}^{\mathrm{T}}\right)^{-1} = \mathbf{I} - \mathbf{C} \left(\mathbf{I} + \mathbf{C}^{\mathrm{T}} \mathbf{C}\right)^{-1} \mathbf{C}^{\mathrm{T}}\)
for the inverse of a sum of matrices.
Let \(\mathbf{H}\) any matrix such that \(\mathbf{H}^{\mathrm{T}} \mathbf{H} = \mathbf{M}_0\) and set \(\mathbf{C} = \sigma_{\varepsilon}^{-1} \mathbf{H} \mathbf{\Sigma}_{\gamma}^{1/2}\), where \(\mathbf{\Sigma}_{\gamma}^{1/2}\) denotes a symmetric square root of \(\mathbf{\Sigma}_{\gamma}\), i.e.\ \(\mathbf{\Sigma}_{\gamma} = \mathbf{\Sigma}_{\gamma}^{1/2} \mathbf{\Sigma}_{\gamma}^{1/2}\).
Then we obtain 
\begin{equation*}
	 \left(\sigma_{\varepsilon}^{2} \mathbf{I}_{m} + \mathbf{H} \mathbf{\Sigma}_{\gamma} \mathbf{H}^{\mathrm{T}}\right)^{-1}
	=
	{\textstyle{\frac{1}{\sigma_{\varepsilon}^{2}}}}
	\left(
		\mathbf{I}_{m} 
		- \mathbf{H} \mathbf{\Sigma}_{\gamma}^{1/2} 
		\left(
			\sigma_{\varepsilon}^2 \mathbf{I}_{J}
			+ \mathbf{\Sigma}_{\gamma}^{1/2} \mathbf{H}^{\mathrm{T}} \mathbf{H} \mathbf{\Sigma}_{\gamma}^{1/2}
		\right)^{-1} 
		\mathbf{\Sigma}_{\gamma}^{1/2} \mathbf{H}^{\mathrm{T}}
	\right) \, ,
\end{equation*}
where \(m\) is the number of rows in \(\mathbf{H}\).
From this, we get
\begin{equation*}
	\mathbf{H}^{\mathrm{T}}  \left(\sigma_{\varepsilon}^{2} \mathbf{I}_{m} + \mathbf{H} \mathbf{\Sigma}_{\gamma} \mathbf{H}^{\mathrm{T}}\right)^{-1} \mathbf{H}
	=
	{\textstyle{\frac{1}{\sigma_{\varepsilon}^{2}}}}
	\left(
		\mathbf{M}_{0} 
		- \mathbf{M}_{0} \mathbf{\Sigma}_{\gamma}^{1/2} 
		\left(
			\sigma_{\varepsilon}^2 \mathbf{I}_{J}
			+ \mathbf{\Sigma}_{\gamma}^{1/2} \mathbf{M}_{0} \mathbf{\Sigma}_{\gamma}^{1/2}
		\right)^{-1} 
		\mathbf{\Sigma}_{\gamma}^{1/2} \mathbf{M}_{0}
	\right) \, ,
\end{equation*}
irrespectively of the particular choice of \(\mathbf{H}\).  If we set
\(\mathbf{H} = \mathbf{F}\) or \(\mathbf{H} = \mathbf{M}_{0}^{1/2}\),
we get the left or right hand side in~\eqref{eq:infogeneralsupport},
respectively.

\subsection*{Proof of Theorem~\ref{theorem:equivalence}}
\label{subsec:appendix-b}
The \(D\)-criterion is continuous and concave by
Lemma~\ref{lem:concave}.  We first start with the situation that the
optimal design \(\xi^*\) is fully supported and, hence, the
information matrix \(\mathbf{M}_{0}(\xi^*)\) in the corresponding
one-way layout is non-singular.  For a fully supported design
\(\xi = \begin{pmatrix} I_1 & \ldots & I_J \end{pmatrix}\),
\(I_j > 0\) for all \(j=1,\ldots,J\), the directional derivative of
\(\log\det(\mathbf{M}_{\bm{\beta}}(\xi))\) at \(\xi\) in the direction
of another design
\(\eta = \begin{pmatrix} I_1^{\prime} & \ldots &
  I_J^{\prime} \end{pmatrix}\) is given by
\begin{align*}
  & \mathrm{trace}\left(\mathbf{M}_{\bm{\beta}}(\xi)^{-1} \mathbf{A}_{\bm{\beta}}^{\mathrm{T}} \left(\sigma_{\varepsilon}^2 \mathbf{M}_{0}(\xi)^{-1} + \mathbf{\Sigma}_{\gamma}\right)^{-1} \sigma_{\varepsilon}^2 \mathbf{M}_{0}(\xi)^{-1} \left(\mathbf{M}_{0}(\eta) - \mathbf{M}_{0}(\xi)\right) \mathbf{M}_{0}(\xi)^{-1}
    \right.
  \\
  & \left. \hspace*{40mm} \mbox{} \times
    \left(\sigma_{\varepsilon}^2 \mathbf{M}_{0}(\xi)^{-1} + \mathbf{\Sigma}_{\gamma}\right)^{-1} \mathbf{A}_{\bm{\beta}}\right)
  \\
  & \hspace*{20mm} = \sum_{j = 1}^{J} I_j^{\prime} \psi_j(\xi) - \sum_{j = 1}^{J} I_j \psi_j(\xi) \, .
\end{align*}
As \(\mathbf{M}_{\eta,0}\) and, hence, the directional derivative is
linear in \(\eta\), the criterion is differentiable.  Thus, by
standard arguments of convex optimization, it is sufficient to
consider the directional derivative for single time point designs
\(\eta = \xi_j\) assigning all \(I\) observations to a single time
point \(t_j\).  Then the design \(\xi^*\) is \(D\)-optimal if (and
only if) the directional derivative at \(\xi = \xi^*\) is less or
equal to zero for all \(\eta = \xi_j\).  This is equivalent to the
condition that \(I \psi_j(\xi^*)\) is less or equal to
\(\frac{1}{I} \sum_{j = 1}^{J} I_j \psi_j(\xi^*)\)
for all \(j = 1, \ldots, J\).

For the general situation, in which the optimal design \(\xi^*\) may
be supported by subset of the time points, we follow an idea proposed
by \cite{Gaffke:2022} based on the use of subgradients which we sketch
here.  By Lemma~\ref{lem:concave} the \(D\)-criterion is continuous
and concave on the set \(\Xi_{\bm{\beta}}\) of all designs for which
\(\bm{\beta}\) is estimable.  Hence,
\(-\log\det(\mathbf{M}_{\bm{\beta}}(\xi))\) is a ``closed proper
convex function'' in the spirit of \cite{Rockafellar:1970}.  The
vector
\(\bm{\psi}(\xi) = \begin{pmatrix} \psi_1(\xi), \ldots,
  \psi_j(\xi) \end{pmatrix}^{\mathrm{T}}\) is the gradient at \(\xi\)
of the \(D\)-criterion on the set of fully supported designs and can
be continuously extended to \(\Xi_{\bm{\beta}}\) by its
definition~\eqref{eq:def-psi}.
	
By Theorem~25.6 in \cite{Rockafellar:1970}, at not fully supported
designs \(\xi \in \Xi_{\bm{\beta}}\), the set of subgradients is given
by
\(\{\bm{\psi}(\xi) + \mathbf{v}; \mathbf{v} \in \mathcal{V}(\xi)\}\),
where \(\mathcal{V}(\xi)\) is the set of all
\(\mathbf{v} = \begin{pmatrix} v_1, \ldots,
  v_J \end{pmatrix}^{\mathrm{T}}\) such that \(v_j = 0\) for
\(I_j > 0\) and \(v_j \geq 0\) for \(I_j = 0\).  Then, according to
Theorem~27.4 in \cite{Rockafellar:1970}, the design \(\xi^*\) is
\(D\)-optimal if and only if there is a vector
\(\mathbf{v} \in \mathcal{V}(\xi)\) such that
\(\sum_{j = 1}^{J} I_j (\psi_j(\xi^*) + v_j) \leq \sum_{j = 1}^{J}
I_j^* (\psi_j(\xi^*) + v_j)\) for all designs
\(\xi = \begin{pmatrix} I_1 & \ldots & I_J \end{pmatrix}\) Now,
\(\sum_{j = 1}^{J} I_j v_j \geq 0\) and
\(\sum_{j = 1}^{J} I_j^* v_j = 0\) for any
\(\mathbf{v} \in \mathcal{V}(\xi)\).  Hence, \(D\)-optimality of
\(\xi^*\) is equivalent to
\(\sum_{j = 1}^{J} I_j \psi_j(\xi^*) \leq \sum_{j = 1}^{J} I_j^*
\psi_j(\xi^*)\) for all \(\xi\).  In the equivalence condition we may
again restrict to single time point designs \(\xi = \xi_j\) which
establishes~\eqref{eq:equivalence}.
	
Furthermore, for an optimal design \(\xi^*\) equality follows
in~\eqref{eq:equivalence} for all support points of \(\xi^*\).


\begin{thebibliography}{}
\bibitem[Brenson, Pinheiro and Bretz(2003)]{BrensonPinheiroBretz:2003TRa}
  Branson, M., Pinheiro, J., and Bretz, F.\ (2003).
  \newblock Searching for an adequate dose: Combining multiple
    comparisons and modeling techniques in dose-response studies.
  \newblock Technical Report No. 2003-08-20, Novartis
  Pharmaceuticals.
%
\bibitem[Dette, Bretz, Pepelyshev and Pinheiro(2008)]{DetteBretzPepelyshevPinheiro:2008a}
  Dette, H., Bretz, F., Pepelyshev, A.,  and
  Pinheiro, J.\ (2008).
  \newblock Optimal designs for dose-finding studies.
  \newblock {\em Journal of the American Statistical Association},
  \textbf{103},
  \newblock 1225--1237.
%
\bibitem[Chernoff(1953)]{Chernoff:1953}
Chernoff, H.\ (1953).
\newblock Locally optimal designs for estimating parameters.
\newblock {\em The Annals of Mathematical Statistics}
\textbf{34},
\newblock 586--602.
%
\bibitem[Entholzner et al.(2005)]{EntholznerBendaSchmelterSchwabe:2005}
Entholzner, M., Benda, N., Schmelter, T., and Schwabe, R.\ (2005).
\newblock A note on designs for estimating population parameters.
\newblock {\em Biometrical Letters - Listy Biometryczne}
\textbf{42},
\newblock 25--41.
%
\bibitem[Fedorov and Hackl(1997)]{FedorovHackl:1997}
Fedorov, V. and Hackl, P.\ (1997).
\newblock {\em Model-Oriented Design of Experiments}.
\newblock New York: Springer.
%
\bibitem[Gaffke(2022)]{Gaffke:2022}
Gaffke, N. (2022).
\newblock Subgradients and equivalence theorems
\newblock (personal communication).
%
\bibitem[Gra{\ss}hoff et al.(2012)]{GrasshoffHollingSchwabe:2012}
Gra{\ss}hoff, U., Holling, H., and Schwabe, R. (2012). 
\newblock Optimal design for the {R}asch model. 
\newblock {\em Psychometrika} 
\textbf{77}, 
\newblock 710--723.
%
\bibitem[Kiefer(1959)]{Kiefer:1959}
Kiefer, J.\ (1959).
\newblock Optimum experimental designs.
\newblock {\em Journal of the Royal Statistical Society, Ser.~B}, \textbf{21},
\newblock 272--304.
%
\bibitem[Pukelsheim(1993)]{Pukelsheim:1993}
  Pukelsheim, F.\ (1993).
  \newblock {\em Optimal Design of Experiments}.
  \newblock New York: Wiley.
%
\bibitem[R Core Team(2021)]{R:2020}
  R Core Team (2021).
  \newblock {\em R: A language and environment for statistical
    computing}.
  \newblock R Foundation for Statistical Computing, Vienna.
  \newblock URL https://www.R-project.org/.
%
\bibitem[Rao(1973)]{Rao:1973}
  Rao, C. R.\ (1973).
  \newblock {\em Linear Statistical Inference and its Applications}.
  \newblock 2nd Edition.
  \newblock New York: Wiley.
%
\bibitem[Reeve and Turner(2013)]{ReeveTurner:2013a}
  Reeve, R. and Turner, J. R.\ (2013).
  \newblock Pharmacodynamic models: Parameterizing the {H}ill
  equation, Michaelis-Menten, the logistic curve, and
  relationships among these models.
  \newblock {\em Journal of Biopharmaceutical
    Statistics}, \textbf{23},
  \newblock 648--661.
%
\bibitem[Rockafellar(1970)]{Rockafellar:1970}
  Rockafellar, R. T.\ (1970).
  \newblock {\em Convex Analysis}.
  \newblock Princeton: Princeton University Press.
%
\bibitem[Scharfen, Jansen and Holling(2018)]{ScharfenJansenHolling:2018a}
Scharfen, J., Jansen, K., and Holling, H.\ (2018).
\newblock Retest effects in working memory capacity tests: A
meta-analysis.
\newblock {\em Psychonomic Bulletin {\&} Review}, \textbf{25},
\newblock 2175--2199
%
\bibitem[Scharfen, Peters and Holling(2018)]{ScharfenPetersHolling:2018}
Scharfen, J., Peters, J. M., and Holling, H. (2018). 
\newblock Retest effects in cognitive ability tests: A meta-analysis. 
\newblock {\em Intelligence} 
\textbf{67}, 
\newblock 44--66.
%
\bibitem[Schmelter(2007)]{Schmelter:2007}
  Schmelter, T.\ (2007).
  \newblock The optimality of single-group designs for certain mixed
  models.
  \newblock {\em Metrika}, \textbf{65},
  \newblock 183--193.
%
\bibitem[Silvey(1980)]{Silvey:1980}
Silvey, S. D.\ (1980).
\newblock {\em Optimal Design}.
\newblock London: Chapman and Hall.
%

\bibitem[Zyskind(1967)]{Zyskind:1967}
Zyskind, G.\ (1967).
\newblock On canonical forms, non-negative covariance matrices and best and simple least squares linear estimators in linear models.
\newblock {\em The Annals of Mathematical Statistics}
\textbf{38},
\newblock 1092--1109.
\end{thebibliography}
\end{document}